\definecolor{zzttqq}{rgb}{0.6,0.2,0}
\definecolor{qqqqff}{rgb}{0,0,1}
\theoremstyle{plain}
\newtheorem{theorem}{Theorem}[section]
\newtheorem{corollary}[theorem]{Corollary}
\newtheorem{lemma}[theorem]{Lemma}
\newtheorem{proposition}[theorem]{Proposition}
\newtheorem{conjecture}[theorem]{Conjecture}
\theoremstyle{definition}
\newtheorem{definition}{Definition}[section]
\theoremstyle{remark}
\newtheorem{remark}{Remark}[section]
\newtheorem{example}{Example}[section]
\numberwithin{equation}{section}
\numberwithin{table}{section}
\numberwithin{figure}{section}
\newcommand{\olga}{\color{blue}}
\begin{document}

\title[A classification  of  monotone   ribbons   with full Schur    support]{ A classification  of  monotone   ribbons   with full Schur    support \\ with application to the classification of full equivalence classes}

\author{Olga Azenhas}
\address{CMUC, Department of Mathematics, University of Coimbra, Apartado 3008,
3001--454 Coimbra, Portugal} \email{oazenhas@mat.uc.pt}

\author{Ricardo Mamede}
\address{CMUC, Department of Mathematics, University of Coimbra, Apartado 3008,
3001--454 Coimbra, Portugal} \email{mamede@mat.uc.pt}

\thanks{This work was partially supported by the Centre for Mathematics of the
University of Coimbra -- UID/MAT/00324/2013, funded by the Portuguese
Government through FCT/MCTES and co-funded by the European Regional
Development Fund through the Partnership Agreement PT2020, and by the FCT sabbatical
grant SFRH/BSAB/113584/2015. The  first author  wishes to acknowledge the hospitality of  University of Vienna where her sabbatical leaving in the academic year 2015/2016 took place and this work was partially developed.}

\keywords{ Schur functions, Schur support, ribbons, companion tableau of a ribbon Littlewood-Richardson tableau.
}

\subjclass[2000]{05A17, 05E05, 05E10, 68Q17}

\maketitle
\begin{abstract}
We consider ribbon shapes, not necessarily connected, whose rows, with at least two boxes in each, are in  monotone length order.  These ribbons are uniquely defined by a pair of partitions: the row partition consisting of  the row lengths in decreasing order, and the overlapping partition whose entries count the total number of columns with two boxes in the successive  ribbon shapes obtained by sequentially subtracting   the longest row.
The support of such ribbon Schur functions,  considered as a subposet of
the dominance order lattice, has the row partition as bottom element, and,  as top element, the partition whose two parts consist of the total number of columns, and the total number of columns of length two respectively.  We give a complete system of
linear inequalities in terms of the partition pair defining the aforesaid ribbon shape
 under which  the ribbon Schur function attains  all the
Schur interval  when expanded in the basis of Schur functions. We then
conclude that the Gaetz-Hardt-Sridhar necessary condition for a connected ribbon to have  full equivalence class  is equivalent to the condition for a monotone connected ribbon to have  full Schur support. That is, the  set of partitions with full equivalence class is a subset of those   monotone connected ribbons with full Schur support.  M. Gaetz, W. Hardt and S. Sridhar  conjectured that the necessary condition is  also sufficient which translates now to  every monotone connected ribbon with full Schur support has full equivalence class. The main tool of our analysis is the structure of the companion tableau of a ribbon Littlewood-Richardson (LR) tableau  detected by the descent set defined by the composition whose parts are the ribbon row lengths.
\end{abstract}
\section{Introduction and statement of results}
Littlewood-Richardson (LR) coefficients, non negative integers, arise in a variety of areas of mathematics \cite{ful2}.
Determining its positivity without evaluating its actual value is of importance.  There  exists a variety of combinatorial models, collectively called
 Littlewood-Richardson rules (the original model conjectured in \cite{lr} and proved in \cite{sch,T2}) to compute LR coefficients, and to show their positivity it is enough to exhibit an object in a chosen combinatorial model. Linear inequalities on triples  of partitions guaranteeing their positivity  have arisen
from studying eigenvalues of a sum of Hermitian matrices \cite{horn,klyachko,knutao,ful2}. Given the skew partition $A:=\lambda/\mu$, with $\mu\subseteq \lambda$ partitions, it is known that it uniquely defines a subposet $[\rm r(A), \rm \rm c(A)']$ in the dominance order lattice of partitions of $|A|$, the number of boxes of $A$, where the  bottom element $\rm r(A)$ is the
  partition formed by the row lengths of $A$,
 and the top element $\rm c(A)'$ is  the conjugate of the partition $\rm c(A)$ formed by the column lengths of $A$. The meaning of this interval is that, given the partition $\nu$ of  $|A|$, the LR coefficient $c_A^\nu:=c_{\mu,\nu}^\lambda> 0$ only if  $\nu\in [\rm r(A),\rm c(A)']$ and, in particular, $c_A^{\rm r(A)}=c_A^{\rm c(A)'}=1$ (see,for instance, \cite{az,Mn} and references therein). Indeed it is not enough $\nu\in [\rm r(A),\rm c(A)']$ to guarantee that $c_A^\nu>0$ \cite{knutao, ful2}.

The LR coefficient $c_A^\nu$  is a structure coefficient. It arises, for example, as the multiplicity of the Specht module $S^\nu$  in the  decomposition of  the skew Specht module $S^A$ into irreducible representations  of the symmetric group $\sum_{|A|}$,
\begin{equation}\label{spech}S^{A}\cong\bigoplus_{\nu\in [\rm r(A),\rm c(A)']}(S^{\nu})^{\oplus{c_A^\nu}};\end{equation}
and, in the algebra of symmetric functions, as a coefficient of the Schur function $s_\nu$ in the expansion of the skew Schur function $s_A$
 in the basis of Schur functions $s_\nu$,
\begin{equation}\label{exp}s_{A}=\sum_{\nu\in [\rm r(A),\rm c(A)']}c_A^\nu s_{\nu}.\end{equation}
The expansion \eqref{exp} is also  the image of the character of $S^A$ under the Frobenius characteristic map.
Another way to look either at  expansions \eqref{spech} or \eqref{exp} is that given  $A=\lambda/\mu$, $\mu\subseteq \lambda$, they generate all possible positive LR coefficients $c_A^\nu:=c_{\mu,\nu}^\lambda$.
In view of these expansions, $[\rm r(A),\rm c(A)']$ is then the Schur interval of the skew shape $A$, and
the {\em Schur support} $[A]$ of the skew shape $A$ is   the set of partition shapes $\nu$ where either $S^\nu$ appears with positive multiplicity in \eqref{spech} or $s_{\nu}$ appears with nonzero coefficient in \eqref{exp},
\begin{equation}[A]:=\{\nu:c_{A}^{\nu}>0\}\subseteq [\rm r(A),\rm c(A)'].\label{interval}
\end{equation}
The skew shape $A$ is said to have {\em full Schur support} when in \eqref{interval} the support  coincides with  the Schur interval.

A very general problem in the calculus of shapes is the classification of skew shapes $A$ whose Schur support consists
of the whole interval $ [\rm r(A),\rm c(A)']$ in the dominance order lattice of partitions. (See also Question 5.1 in \cite[Section 5]{PvW}.) In other words, given the  partition $\nu$ of $|A|$, we ask  under which conditions  one has,
$c_{A}^{\nu}>0$ {if and only if } $ \nu\in [\rm r(A),\rm c(A)']$. In the special case  of requiring  all coefficients $c_{A}^{\nu}=1$,  the multiplicity free full interval, a classification was given in \cite{acm17}. We here give,  in Theorem \ref{corf}, a full Schur support classification for  monotone ribbon shapes, not necessarily connected,  with at least two boxes in each row, in terms of linear inequalities \eqref{fullineq}  satisfied by the partition pair $(\alpha,p)$ consisting of the row  and overlapping partitions  defining the monotone ribbon shape (see Proposition \ref{prop:biject}). The significance of this classification also amounts to the classification of connected ribbons with {\em full equivalence class} (\cite[Definition 7]{fullequiv}), that is, connected ribbons whose Schur support is invariant under  any order rearrangement of the rows. More precisely,  monotone connected ribbons with full equivalence class  only exist among those  with full Schur support. This is a recent  input on our study of  monotone ribbons having full Schur support and comes from the work by  Gaetz,  Hardt,   Sridhar and  Quoc Tran \cite{fullequiv,fullequiv2} where the support equality among connected ribbon Schur functions under any order rearrangement of the rows is addressed.
 The set of connected ribbons with full equivalence class  has partitions as ribbon representatives. Lemma \ref{lem:equiv} shows that the Gaetz-Hardt-Sridhar necessary condition \cite[Theorem II.1]{fullequiv} for connected ribbons to have full equivalence class is equivalent to our classification, in  Theorem \ref{corf}, of  monotone  connected  ribbons with full Schur support. Theorem \ref{fullequivschur} concludes that a  monotone connected ribbon with full equivalence class has   full Schur support. For monotone connected  ribbons with at most four rows, ribbons with full equivalence class coincide with ribbons with full Schur support.

 Earlier work on calculus of skew shapes are, for instance,  Schur support containments    by Pylyavskyy, McNamara and  van Willigenburg \cite{pylyavsky,PvW},  skew shapes with the same Schur support or   skew Schur function equalities by McNamara and   van Willigenburg \cite{Mn,PvW1}. In particular, ribbon Schur functions  were already considered by MacMahon \cite[ 199--202]{mac} and Foulkes \cite{foulkes} with
representation-theoretic significance by the last. Finally, it is worth noting that  Reiner, Shimozono \cite{rs} and  R. I. Liu \cite{liu} have considered  Specht modules and, therefore, Schur functions  for more general diagrams than skew shapes. However, apart  percentage-avoiding diagrams \cite{rs}, the combinatorial description of the coefficients for the Schur expansion is not known in general.

\subsection{ Overlapping partition of a monotone ribbon and descent set of a SYT} Arbitrary connected ribbons (diagrams corresponding to  skew shapes containing no $2\times 2$ rectangle)  are in bijection with compositions  assigning to the ribbon the row lengths. Thanks to the $\pi$-rotation symmetry of LR coefficients \cite{stanley,acm},  the Schur support classification  of LR monotone ribbons  may be reduced to ribbons with row lengths in monotone decreasing order.
Decreasing monotone ribbons with rows in length at least two, have at most columns of length two which occur exactly when two rows overlap: the overlapping partition $p$, read in reverse order, records sequentially, by accumulation, the number of columns of length two from the bottom  to the top rows of the ribbon (see  Section \ref{sec:ribbon}  and  Definition \ref{overlap}).
Proposition \ref{prop:biject} shows that monotone ribbons, not necessarily connected, with at least  two boxes in each row in monotone length order, are in bijection, up to an antipodal rotation,  with   partition pairs $(\alpha,p)$ where the $\ell(\alpha)$ parts of the row lengths partition $\alpha=(\alpha_1,\dots,\alpha_{\ell(\alpha)})$ are in length at least two, and the $\ell(p)$ parts of the overlapping partition $p=(p_1,\dots,p_{\ell(\alpha)-1},0)$ are assigned  by  a multiset of $\{\ell(\alpha)-k,\dots,2,1\}$ of cardinality $\ell(\alpha)-k\le \ell(p)\le \ell(\alpha)-1$ with $k\in\{1,\dots,\ell(\alpha)\}$.  We often denote these ribbons by $R_\alpha^p$, or just  say  the partition $\alpha$ with overlapping partition $p$    to mean that $p$ is the overlapping partition of the ribbon  $R^p_\alpha$.  The   Schur interval of our ribbon  $R_\alpha^p$, with $k=\ell(\alpha)-p_1$ connected components, is
\begin{equation}\label{partsupp}[R_\alpha^p]\subseteq [\alpha,(|\alpha|-\ell(\alpha)+k,\ell(\alpha)-k)].\end{equation}

\begin{example} \label{example:first}The partition pair $(\alpha=(3,3,2,2,2), p=(2,2,1,1,0))$ where $p_1= \ell(\alpha)-3=2$ and $\ell(p)=\ell(\alpha)-1=4$, defines the monotone ribbon $R_{(3,3,2,2,2)}^p$, below, with 3 connected components, and Schur interval  $ [(3,3,2,2,2),(10,2)]$,
\begin{equation}\label{ex:first}\young(:::::::\hfill\hfill\hfill,::::\hfill\hfill\hfill,:::\hfil\hfill,:\hfill\hfill,\hfill\hfill).\end{equation}
\end{example}

Our classification is based on the fact that given  a monotone ribbon with row lengths at least two, defined by the partition pair $(\alpha,p)$, the existence of a companion tableau \cite[Appendix]{leclen,Nak05} for an  LR  filling of $R^p_\alpha$ with content $\nu$,   is equivalent to show that the triple of partitions $\alpha$,  $p$ and $\nu$  satisfy a certain system of linear inequalities \eqref{ineq} in Theorem \ref{ThmNec}.
 The companion tableau of a LR  connected ribbon $R_\alpha$ is detected by the descent set  $S(\alpha)=\{\alpha_1,\alpha_1+\alpha_2,\dots,\alpha_1+\cdots+\alpha_{\ell(\alpha)-1}\}$ of its standardization (see sections \ref{sec:standard} and \ref{subsec:descent}).   The following alternative description of the LR coefficients in the expansion  \eqref{exp} is known \cite{foulkes,gessel84,gessel93}, counting exactly standardized companion tableaux of connected LR ribbons.
\begin{theorem} \label{th:gessel} \cite{foulkes,gessel84,gessel93}. Let
 $\alpha$ be any composition of $N$ and $R_\alpha$ the corresponding connected ribbon shape. Then
$$s_{R_\alpha}=\sum_{\nu}d_{\nu,\alpha}s_{\nu},$$
where $\nu$ runs on the set of partitions of $N$, and $d_{\nu,\alpha}$ is the number of standard Young tableaux (SYT) of shape $\nu$ and descent set $S(\alpha)=\{\alpha_1,\alpha_1+\alpha_2,\dots,\alpha_1+\cdots+\alpha_{\ell(\alpha)-1}\}.$
\end{theorem}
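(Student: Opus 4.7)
The plan is to derive the formula by combining the classical expansion of a ribbon Schur function in the basis of complete homogeneous symmetric functions, the Kostka expansion $h_\beta=\sum_\nu K_{\nu,\beta}\,s_\nu$, and a standardization interpretation of the Kostka numbers, finishing with Möbius inversion on the Boolean lattice of subsets of $S(\alpha)$.

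The first step is to record, either as a consequence of the Jacobi--Trudi determinant for the skew shape $R_\alpha$ or by inclusion--exclusion on the overlap columns, the identity
$$s_{R_\alpha}=\sum_{T\subseteq S(\alpha)}(-1)^{|S(\alpha)\setminus T|}\,h_{\operatorname{co}(T)},$$
where $\operatorname{co}(T)$ denotes the composition of $N$ whose associated descent set in $\{1,\dots,N-1\}$ equals $T$. Intuitively $h_{\alpha_1}\cdots h_{\alpha_{\ell(\alpha)}}$ enumerates semistandard fillings of the disjoint row strips of $R_\alpha$, and the signed corrections remove the fillings in which some overlap column fails to be strictly increasing.

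Substituting $h_{\operatorname{co}(T)}=\sum_\nu K_{\nu,\operatorname{co}(T)}\,s_\nu$ converts this into
$$s_{R_\alpha}=\sum_\nu\!\left(\sum_{T\subseteq S(\alpha)}(-1)^{|S(\alpha)\setminus T|}K_{\nu,\operatorname{co}(T)}\right)\!s_\nu,$$
so the task reduces to identifying the parenthesised coefficient with $d_{\nu,\alpha}$. For this I would establish the standardization lemma
$$K_{\nu,\beta}=\#\{U\in\mathrm{SYT}(\nu):D(U)\subseteq S(\beta)\},$$
where $D(U)$ denotes the descent set of the SYT $U$. Starting from an SSYT of shape $\nu$ and content $\beta$, one relabels the $\beta_i$ copies of $i$ by the consecutive integers of the block $\beta_1+\dots+\beta_{i-1}+1,\dots,\beta_1+\dots+\beta_i$ taken in order of increasing column (equivalently decreasing row); one then verifies that the resulting tableau is standard, that its descents can only occur at the block boundaries recorded by $S(\beta)$, and that the map is reversible from any SYT whose descent set is contained in $S(\beta)$.

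With the lemma in hand, Möbius inversion on the Boolean lattice of subsets of $S(\alpha)$ gives
$$\sum_{T\subseteq S(\alpha)}(-1)^{|S(\alpha)\setminus T|}\#\{U\in\mathrm{SYT}(\nu):D(U)\subseteq T\}=\#\{U\in\mathrm{SYT}(\nu):D(U)=S(\alpha)\}=d_{\nu,\alpha},$$
which is exactly the stated formula. The main obstacle is the standardization lemma: verifying both that no descent can fall strictly inside a block of $S(\beta)$ and that the inverse map sends any SYT with $D(U)\subseteq S(\beta)$ to a bona fide SSYT depends subtly on the column-strictness of an SSYT and on the precise definition of descent for a SYT. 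The $h$-basis ribbon expansion and the concluding inclusion--exclusion are then formal, so the whole argument really turns on this classical but delicate standardization bookkeeping.
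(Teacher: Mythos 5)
Your argument is correct, but it follows the classical MacMahon--Gessel route rather than the one the paper takes. You expand $s_{R_\alpha}$ in the complete homogeneous basis by inclusion--exclusion over coarsenings of $\alpha$ (equivalently over subsets $T\subseteq\mathcal{S}(\alpha)$), convert each $h_{\mathrm{co}(T)}$ to Schur functions via Kostka numbers, interpret $K_{\nu,\beta}$ as the number of SYT of shape $\nu$ with descent set contained in $\mathcal{S}(\beta)$, and finish by M\"obius inversion on the Boolean lattice of subsets of $\mathcal{S}(\alpha)$; all four steps are sound, and the telescoping at the end is routine. The paper instead stays sign-free: it starts from the Littlewood--Richardson rule $s_{R_\alpha}=\sum_\nu \#\mathcal{LR}(R_\alpha,\nu)\,s_\nu$, passes from an LR filling of the ribbon to its companion tableau in $Tab(\nu,\alpha)$ (Subsection \ref{subsec:comp}), and shows (Proposition \ref{l2new}, Corollary \ref{l3new}) that $G\in Tab(\nu,\alpha)$ is such a companion exactly when $\mathcal{D}(\widehat G)=\mathcal{S}(\alpha)$, which via Proposition \ref{prop:desc} identifies $c^\nu_{R_\alpha}$ bijectively with the number of SYT of shape $\nu$ and descent set $\mathcal{S}(\alpha)$. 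Note that your ``standardization lemma'' is precisely Proposition \ref{prop:desc} of the paper, so that ingredient is common to both arguments; what differs is the outer shell. Your route is self-contained modulo the Jacobi--Trudi/inclusion--exclusion expansion of a ribbon and does not invoke the LR rule at all, at the cost of a signed, cancellative argument; the paper's route produces an explicit bijection and, more importantly, installs the companion-tableau, descent-set and critical-number machinery on which Theorems \ref{ThmNec}--\ref{corf} are subsequently built.
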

This means that given the connected ribbon $R_\alpha$, the  LR ribbon coefficient $c^\nu_{R_\alpha}=d_{\nu,\alpha}$ is positive if and only if  there exists a semistandard Young tableau (SSYT) tableau of shape $\nu$ and content $\alpha$ whose standardization  has descent set $\mathcal{S}(\alpha)=\{\alpha_1,\alpha_1+\alpha_2,\dots,$ $\alpha_1+\cdots+\alpha_{\ell(\alpha)-1}\}$. For ordered compositions with parts of length at least two, we show, in Theorem \ref{ThmNec}, that the existence of such standard Young tableau guaranteeing the positivity of $c^\nu_{R_\alpha}$ is equivalent  to require that the triple of partitions $\alpha$, $\nu$ and $p=(\ell(\alpha)-1,\dots,2,1,0)$ satisfy a certain  system of linear  inequalities \eqref{ineq}. More generally, we prove that the characterization is valid for  monotone ribbons with $k$ components  by replacing the stair partition  $p$ of $\ell(\alpha)-1$ with a multiset of $\{\ell(\alpha)-k,\dots,1\}$ of cardinality $\ell(\alpha)-k\le \ell(p)\le \ell(\alpha)-1$, where $k\in\{1,\dots,\ell(\alpha)\}$.
Our method then consists of explicitly identifying in a SSYT of shape $\nu$ and content the partition $\alpha$, the obstructions for being a companion tableau for a monotone LR ribbon, with the goal to  remove them through a {\em rotation procedure} (see Subsection \ref{sec:rotation}). This removal is possible  whenever  linear inequalities \eqref{ineq} are satisfied by the triple of partitions $(\alpha,p,\nu)$. More precisely,
the {\em effective} obstructions, detected by the overlapping partition $p$, correspond to some  elements in  $\mathcal{S}(\alpha)$ which are not in the descent set of the standardized tableau. Thus to exhibit the positivity of a such LR ribbon coefficient one just needs to exhibit a companion tableau for the ribbon LR filling. To minimize the number of obstructions that we have to deal with we work out on  a SSYT with canonical filling (see Section \ref{sec:canonicalfilling}).

\subsection{ Monotone ribbons: witness vectors and their slacks}
Put $x_+:=max\{0,x\}$ where $x$ is a real number. To a monotone ribbon $R_\alpha^p$, we associate a sequence  $\{\tilde g^i\}_{i=1}^{\ell(p)-1}$ of $ \ell(p)-1$  {\em witness vectors}, and  to each witness $\tilde g^i$  we assign  the {\em slack} $p_{i+1}-1$,  
 for $ i\in\{1,\dots, \ell(p)-1\}$.
\begin{definition}\label{witness} Let $\alpha$ be a partition with parts at least two and with overlapping partition $p$. For each $ i\in\{1,$$\dots, $ $\ell(p)-1\}$,  put $\displaystyle \varrho_i-1:=\sum_{q=i+1}^{\ell(\alpha)}\alpha_q-p_{i+1}>0$ the {\em rest of order} $i$  of $R^p_\alpha$, that is, the total number of columns in the last $\ell(\alpha)-i$ rows of $R^p_\alpha$. Define the {\em $i$-witness vector } of $R_\alpha^p$  to be the nonnegative vector $\tilde g^i=(\tilde g_1^i,\dots,\tilde g_i^i)$ where
$\displaystyle \tilde g_j^i:=\left[ \varrho_i-\alpha_j\right]_+,\;j=1,\dots, i.$ The {\em slack} of the $i$-witness vector  is $p_{i+1}-1$, for $ i\in\{1,\dots, \ell(p)-1\}$.
If $\ell(p)=0,1$, $R^p_\alpha$ has no witness vectors.
\end{definition}

The  size $|\tilde g^i|:=\sum_{j=1}^i\left[ \varrho_i-\alpha_j\right]_+$ of the $i$-witness vector $\tilde g^i$  is said to fit its slack, if $|\tilde g^i|\le p_{i+1}-1$, otherwise is said to be oversized.
\begin{remark} For $ i\in\{1,$$\dots, $ $\ell(p)-1\}$, $\varrho_i$ exceeds the total number of columns in the last $\ell(\alpha)-i$ rows of $R^p_\alpha$. In any LR filling of $R_\alpha^p$ the $i+1$'s are filled in the last $\ell(\alpha)-i$ rows, and thereby its number is $<\varrho_{i}$. For $ i\in\{1,$$\dots, $ $\ell(p)-1\}$, $\tilde g^i=0$ if and only if $\alpha_i\ge\varrho_i$.
\end{remark}

\subsection{Statement of main results} Our  key result is Theorem \ref{ThmNec} which determines $c_{R_{\alpha}^p}^{\nu}>0$ without determining its actual value.
 It gives a set of linear inequalities on the partition triple  $(\alpha, p,\nu)$ as necessary and sufficient conditions for the positivity of $c_{R^p_{\alpha}}^\nu$. The inequalities are explained by the combinatorial interpretation of $\alpha\preceq\nu$ in the dominance order on partitions (see Remark \ref{re:ineqdom}),  and the obstruction of the overlapping partition $p$ to the partitions dominating $\alpha$.  When $p=0$, we have no such obstruction,  $c_{R_{\alpha}}^{\nu}$ is a Kostka number, and $\alpha\preceq\nu$ characterizes completely the aforesaid positivity.
\begin{theorem}\label{ThmNec} Let  $\alpha$ be a partition with parts at least two and overlapping partition $p=(p_1,\dots,p_{\ell(\alpha)-1},0)$, and $\nu$ a partition  of $|\alpha|$. Then
   \begin{equation}\label{ineq}c_{R_{\alpha}^p}^{\nu}>0 \Leftrightarrow \begin{cases}
\nu\in[\alpha,(|\alpha|-p_1,p_1)],\\
 \nu_i\leq {\displaystyle \sum_{q= i}^{\ell(\alpha)}\alpha_q-p_i},\,\text{ for }1\leq i\leq \ell(p).
\end{cases}
\end{equation}
In particular, when $\ell(p)=\ell(\alpha)-1$, that is, $p_i=\ell(\alpha)-i$,  $1\leq i\leq \ell(\alpha)$, there exists a SYT of shape $\nu$ with descent set $\mathcal{S}(\alpha)$ if and only if the right hand side of \eqref{ineq} is satisfied.
\end{theorem}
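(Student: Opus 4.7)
The plan is to use Theorem \ref{th:gessel} (for connected $R_\alpha$, i.e.\ $\ell(p)=\ell(\alpha)-1$ with $p$ the stair partition) together with its extension to monotone ribbons with $k>1$ components, reducing the positivity of $c_{R_\alpha^p}^\nu$ to the existence of a SSYT of shape $\nu$ and content $\alpha$ whose standardisation carries the descent data prescribed by $R_\alpha^p$. The theorem then splits cleanly into necessity and sufficiency.

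For necessity, the containment $\nu \in [\alpha,(|\alpha|-p_1,p_1)]$ is already recorded in \eqref{partsupp}. For each $1\le i\le\ell(p)$, the bound $\nu_i\le\sum_{q\ge i}\alpha_q-p_i$ will be derived from the interplay between the content-$\alpha$ distribution in the companion SSYT and the prescribed descent structure: the $\sum_{q\ge i}\alpha_q$ entries labelled $\ge i$ cannot all sit in rows $\ge i$, because the descent conditions coming from the $p_i$ cumulative columns of length two of $R_\alpha^p$ that remain after removing its top $i-1$ rows force at least $p_i$ such entries into rows $<i$, leaving no more than $\sum_{q\ge i}\alpha_q-p_i$ to fill row $i$, which has length $\nu_i$.

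For sufficiency, assume \eqref{ineq}. Following Section \ref{sec:canonicalfilling}, I would start from the canonical SSYT $C_\nu^\alpha$ of shape $\nu$ and content $\alpha$, which is the filling that minimises the number of \emph{effective obstructions} in its standardisation -- those elements of $\mathcal{S}(\alpha)$ which fail to be descents. By Definition \ref{witness}, these obstructions at level $i$ are counted by the $i$-witness vector $\tilde g^i$, while the slack $p_{i+1}-1$ records the room the overlapping partition provides to absorb them. The rotation procedure of Subsection \ref{sec:rotation} then repairs the obstructions one level at a time, each rotation spending one unit of slack; the assumed inequalities \eqref{ineq} translate, level by level, into the fit-in-slack conditions $|\tilde g^i|\le p_{i+1}-1$ that certify the procedure succeeds. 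The special case $\ell(p)=\ell(\alpha)-1$ (stair $p$) yields the SYT reformulation in the last sentence of the theorem directly through Theorem \ref{th:gessel}.

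The main obstacle will be to verify that rotations at different levels do not interfere: repairing an obstruction at level $i$ must neither reintroduce an already-corrected obstruction at a lower level nor violate the SSYT property. I expect this to require a monotonicity invariant on the evolving tableau's descent profile, together with an induction on $i$ in which the inequalities \eqref{ineq} are propagated through successive rotations to keep the slack budget intact at every step. The canonical initial filling is what makes this induction clean, since it deposits the obstructions in a predictable pattern that the rotation procedure is explicitly tailored to dismantle.
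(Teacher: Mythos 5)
Your overall architecture coincides with the paper's: reduce positivity to the existence of a companion tableau via Theorem \ref{th:gessel} and its disconnected extension (Corollary \ref{l3new}, Theorem \ref{corl2}); prove necessity by showing that each descent at a level $s\ge i$ displaces one of the $\sum_{q\ge i}\alpha_q$ letters labelled $\ge i$ out of row $i$ (the paper sends it either strictly above or strictly below row $i$, not necessarily into a row $<i$ as you write, but the count is the same and gives $\nu_i\le\sum_{q\ge i}\alpha_q-|S\cap\{i,\dots,\ell(\alpha)-1\}|\le\sum_{q\ge i}\alpha_q-p_i$); and prove sufficiency by starting from the canonical filling and removing the $p$-effective critical numbers by the rotation procedure. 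Your concern about non-interference of repairs at different levels is also the right one, and the paper handles it in Theorem \ref{ThmSuf} by partitioning the critical set into blocks of consecutive critical numbers and showing that processing one block can only destroy, never create, critical numbers in later blocks.

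The genuine gap is in the certificate you offer for the success of the rotation procedure. You assert that the hypotheses \eqref{ineq} ``translate, level by level, into the fit-in-slack conditions $|\tilde g^i|\le p_{i+1}-1$.'' This cannot work. First, the witness vectors $\tilde g^i$ and their slacks are functions of the pair $(\alpha,p)$ alone, so the $\nu$-dependent inequalities \eqref{ineq} cannot reduce to them. Second, and more importantly, $|\tilde g^i|\le p_{i+1}-1$ is, by Theorem \ref{Tchar}, precisely the criterion for $R_\alpha^p$ to \emph{fail} to have full Schur support: a witness that fits its slack is used to manufacture a partition in the Schur interval that is \emph{not} in the support. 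It is the opposite of a success certificate, and it plays no role in the proof of Theorem \ref{ThmNec}; it only enters afterwards, in the full-support classification. The quantitative input that actually drives the sufficiency proof is different: if the canonical filling has a block of consecutive critical numbers $j_1<\cdots<j_k$ generating the critical row $j_1-1$, then the single hypothesis $\nu_{j_1-1}\le\sum_{q\ge j_1-1}\alpha_q-p_{j_1-1}$ yields the bound \eqref{eq1}, which guarantees that strictly below row $j_1-1$ there are at least $p_{j_1-1}$ letters exceeding $j_k$ available to be rotated up, while the number of $p$-effective critical numbers ever present in that row can never exceed $p_{j_1-1}$; this is what bounds the cascade of rotations and forces termination without new violations. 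Your proposal does not supply this step, and the tool you name in its place points in the wrong direction, so the sufficiency half of the argument would not go through as planned.
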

The necessary and sufficient condition \eqref{ineq} is easily read: $\nu\in[\alpha,(|\alpha|-p_1,p_1)]$ is in the support of $R^p_\alpha$ if and and only if the $\nu_i<\varrho_{i-1}$, with $\varrho_0:=|\alpha|-p_1+1$,
for $i=1,\dots,\ell(p)$.
With this on hand we give  a criterion to decide  when $R_\alpha^p$ has full Schur support, that is,  when one has $c_{R_{\alpha}^p}^{\nu}>0$ if and only if $\nu\in [\alpha,(|\alpha|-p_1,p_1)].$
 The test  assigns  to each $ i\in\{1,\dots, \ell(p)-1\}$ the $i$-witness vector of $R_\alpha^p$ and compares its size  with the  slack $p_{i+1}-1\ge 0$.
 The existence of a single witness fitting its slack prevents the full Schur support because it can be used to construct a partition in the Schur interval but not in the support. This is the case of a witness of size zero, that is, when the partition $\alpha$ has $\alpha_i\ge \varrho_i$ for some $1\le i\le \ell(p)-1$.
\begin{theorem}\label{Tchar} Let  $\alpha$ be a partition with parts $\ge 2$, and  overlapping partition $p=(p_1,\dots,p_{\ell(\alpha)-1},0)$. Then
$[R^p_\alpha]\subsetneqq [\alpha,(|\alpha|-p_1,p_1)]$ if and only if $\ell(p)\ge 2$ and, for some $1\le i\le \ell(p)-1$,  the  size of the $i$-witness vector $\tilde g^i$  fits its slack,   that is,
\begin{equation}\label{lineq}\sum_{j=1}^i\left[ \varrho_i-\alpha_j\right]_+\le p_{i+1}-1.\end{equation}
In this case,
$
\alpha_j +\tilde g_j^i\ge\varrho_i\ge p_{i+1}-1-|\tilde g^i|$, $j=1,\dots,i$, whose decreasing rearrangement is  the partition
$\displaystyle(\alpha_1 +\tilde g_1^i,\dots,
\alpha_i +\tilde g_i^i,\varrho_i,$ $p_{i+1}-1-|\tilde g^i| )^+$   of $|\alpha|$ in the Schur interval  of $R_\alpha^p$ but  not in the support of $R_\alpha^p$.
\end{theorem}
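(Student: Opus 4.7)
My plan is to deduce Theorem~\ref{Tchar} directly from the arithmetic characterization of the support already provided by Theorem~\ref{ThmNec}: a partition $\nu\in[\alpha,(|\alpha|-p_1,p_1)]$ fails to lie in $[R^p_\alpha]$ precisely when $\nu_i\ge\varrho_{i-1}$ for some $1\le i\le\ell(p)$. The whole question then reduces to deciding for which $(\alpha,p)$ such a $\nu$ can exist, and to exhibiting it explicitly.

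The first step is to rule out $i=1$: the inequality $\nu_1\ge\varrho_0=|\alpha|-p_1+1$ contradicts the upper bound $\nu_1\le|\alpha|-p_1$ built into the Schur interval. Any violation therefore forces $\ell(p)\ge 2$, and after the shift $i':=i-1\in\{1,\dots,\ell(p)-1\}$ the problem becomes: when does there exist $\mu\in[\alpha,(|\alpha|-p_1,p_1)]$ with $\mu_{i'+1}\ge\varrho_{i'}$? I will show this is equivalent to $|\tilde g^{i'}|\le p_{i'+1}-1$, and that when this condition holds the explicit $\mu$ displayed in the statement does the job.

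For the sufficiency direction I take $\mu$ to be the decreasing rearrangement of $(\alpha_1+\tilde g^{i'}_1,\dots,\alpha_{i'}+\tilde g^{i'}_{i'},\varrho_{i'},p_{i'+1}-1-|\tilde g^{i'}|)$, noting that $\alpha_j+\tilde g^{i'}_j=\max(\alpha_j,\varrho_{i'})$ is already decreasing in $j$. The identity $\varrho_{i'}+p_{i'+1}-1=\sum_{q>i'}\alpha_q$ gives $|\mu|=|\alpha|$, nonnegativity of the last entry is exactly the hypothesis, and $\mu_{i'+1}=\varrho_{i'}$ provides the promised violation of the Theorem~\ref{ThmNec} inequality. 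Dominance $\mu\succeq\alpha$ is trivial at $k\le i'$, reduces at $k=i'+1$ to $\varrho_{i'}\ge\alpha_{i'+1}$ (which follows from $\sum_{q>i'+1}\alpha_q\ge 2(\ell(\alpha)-i'-1)\ge p_{i'+1}-1$, using the monotone-ribbon constraint $p_j\le\ell(\alpha)-j$ and $\alpha_q\ge 2$), and is trivial at $k\ge i'+2$. The bound $\mu_1\le|\alpha|-p_1$ is immediate when $\mu_1=\alpha_1$; in the remaining case $\mu_1=\varrho_{i'}>\alpha_1$, the witness hypothesis forces $|\tilde g^{i'}|\ge i'(\varrho_{i'}-\alpha_1)$, whence $\varrho_{i'}\le\alpha_1+p_1-1\le|\alpha|-p_1$.

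For the necessity direction, given any $\mu\in[\alpha,(|\alpha|-p_1,p_1)]$ with $\mu_{i'+1}\ge\varrho_{i'}$, set $s:=\#\{j\le i':\alpha_j>\varrho_{i'}\}$. Dominance at $k=s$ yields $\sum_{j=1}^{s}\mu_j\ge\sum_{j=1}^{s}\alpha_j$; the fact that $\mu$ is a partition with $\mu_{i'+1}\ge\varrho_{i'}$ gives $\mu_j\ge\varrho_{i'}$ for $s<j\le i'+1$. Adding, and using $|\tilde g^{i'}|=(i'-s)\varrho_{i'}-\sum_{j=s+1}^{i'}\alpha_j$, produces $\sum_{j=1}^{i'+1}\mu_j\ge\sum_{j=1}^{i'}\alpha_j+|\tilde g^{i'}|+\varrho_{i'}$; bounding the left side by $|\alpha|=\sum_{j=1}^{i'}\alpha_j+\varrho_{i'}+p_{i'+1}-1$ yields $|\tilde g^{i'}|\le p_{i'+1}-1$, closing the equivalence. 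The hardest part will be the bookkeeping in the sufficiency direction, where dominance and the $\mu_1$ bound must be secured simultaneously by combining $|\tilde g^{i'}|\le p_{i'+1}-1$ with the structural constraints $p_j\le\ell(\alpha)-j$ and $\alpha_q\ge 2$.
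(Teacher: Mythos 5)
Your proposal is correct. The sufficiency (``if'') direction coincides with the paper's: you construct the same explicit partition $(\alpha_1+\tilde g_1^i,\dots,\alpha_i+\tilde g_i^i,\varrho_i,p_{i+1}-1-|\tilde g^i|)^+$, check $|\nu|=|\alpha|$ via $\varrho_i+p_{i+1}-1=\sum_{q>i}\alpha_q$, verify dominance and the exclusion from the support via Corollary \ref{char}; the only cosmetic difference is your justification of $\nu_1\le|\alpha|-p_1$ (you derive $\varrho_i\le\alpha_1+p_1-1$ from $i(\varrho_i-\alpha_1)\le|\tilde g^i|\le p_{i+1}-1$, where the paper simply invokes $|\alpha|-p_1>\varrho_1>\cdots>\varrho_{\ell(p)-1}$ from Remark \ref{re:varrho}). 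The necessity (``only if'') direction is where you genuinely diverge, and your route is cleaner. The paper sets $g_j=\nu_j-\alpha_j$ and must then repair the indices $j$ where this is negative; it does so through a redistribution ``Claim'' proved by a double induction, producing nonnegative $g_j$ with $\alpha_j+g_j\ge\varrho_i$ and unchanged total, after which the entrywise minimality of $\tilde g^i$ yields $|\tilde g^i|\le\sum_j g_j\le p_{i+1}-1$. You bypass the redistribution entirely: splitting the first $i$ indices at $s=\#\{j\le i:\alpha_j>\varrho_i\}$ (an initial segment, since $\alpha$ is a partition), using dominance only at $k=s$ together with $\nu_j\ge\nu_{i+1}\ge\varrho_i$ for $s<j\le i+1$, you obtain $\sum_{j\le i+1}\nu_j\ge\sum_{j\le i}\alpha_j+|\tilde g^i|+\varrho_i$ in one step, and comparison with $|\nu|=|\alpha|=\sum_{j\le i}\alpha_j+\varrho_i+p_{i+1}-1$ finishes the argument. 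This buys a much shorter proof of the harder direction with no loss. The one small detail to add when you write it up is the verification that $\varrho_i\ge p_{i+1}-1-|\tilde g^i|$, which is needed so that the $(i+1)$st entry of the rearranged partition is indeed $\varrho_i$; it follows from $\varrho_i-1\ge p_{i+1}$, i.e.\ the total number of columns in the last $\ell(\alpha)-i$ rows is at least the number of length-two columns among them.
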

The equivalent statement for full Schur support is
 \begin{theorem} \label{corf}
Let $\alpha$  be a partition with parts $\ge 2$, and  overlapping partition $p=(p_1,\dots,p_{\ell(\alpha)-1},0)$. Then
$[R_\alpha^p]=[\alpha,(|\alpha|-p_1,p_1)]$ if and only if
 either $\ell(p)<2$ or $\ell(p)\ge 2$ and, in this case, for every $1\le i\le \ell(p)-1$, the  $i$-witness vector of $R^p_\alpha$  is  oversized with respect to its slack, that is,
\begin{equation}\label{fullineq}\sum_{j=1}^i\left[ \varrho_i-\alpha_j\right]_+\ge p_{i+1},\;\;1\le i\le \ell(p)-1.\end{equation}
\end{theorem}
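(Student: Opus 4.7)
The plan is to derive Theorem \ref{corf} as the logical contrapositive of Theorem \ref{Tchar}, using only the universal containment \eqref{partsupp} as input. Because $[R_\alpha^p]\subseteq[\alpha,(|\alpha|-p_1,p_1)]$ holds for every monotone ribbon, the equality $[R_\alpha^p]=[\alpha,(|\alpha|-p_1,p_1)]$ is precisely the negation of the strict inclusion $[R_\alpha^p]\subsetneqq[\alpha,(|\alpha|-p_1,p_1)]$. I would therefore rewrite ``full Schur support'' as the failure of the hypothesis characterized by Theorem \ref{Tchar} and then perform the quantifier negation directly, with no additional combinatorial input required.

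The key step is the mechanical negation of a conjunction under an existential quantifier. Theorem \ref{Tchar} characterizes strict containment by: $\ell(p)\ge 2$ \emph{and} there exists $1\le i\le \ell(p)-1$ with $\sum_{j=1}^{i}[\varrho_i-\alpha_j]_+\le p_{i+1}-1$. Its negation reads: $\ell(p)<2$, or else $\ell(p)\ge 2$ and for \emph{every} $1\le i\le \ell(p)-1$ one has $\sum_{j=1}^{i}[\varrho_i-\alpha_j]_+> p_{i+1}-1$. Since both sides of the slack inequality are integers, the strict inequality $>p_{i+1}-1$ is equivalent to $\ge p_{i+1}$, which is exactly the condition \eqref{fullineq}. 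This reproduces the disjunctive structure in the statement of Theorem \ref{corf}.

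Finally, I would verify that the vacuous branches $\ell(p)\in\{0,1\}$ fall correctly under the ``$\ell(p)<2$'' alternative. In those cases Definition \ref{witness} supplies no witness vectors at all, so the existential clause in Theorem \ref{Tchar} is trivially false, strict containment is automatically excluded, and \eqref{partsupp} then forces equality $[R_\alpha^p]=[\alpha,(|\alpha|-p_1,p_1)]$. Thus Theorem \ref{corf} is a direct consequence of Theorem \ref{Tchar}. The main obstacle is purely logical bookkeeping: swapping the quantifier from $\exists$ to $\forall$, translating the integer-valued strict inequality, and placing the two vacuous cases on the correct side of the disjunction. No extra combinatorial construction and no further appeal to Theorem \ref{ThmNec} or to the rotation procedure are needed beyond what was already used in the proof of Theorem \ref{Tchar}.
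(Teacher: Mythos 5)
Your proposal is correct and coincides with the paper's own treatment: the paper explicitly presents Theorem \ref{corf} as the statement logically equivalent to Theorem \ref{Tchar}, proves the latter, and obtains the former by exactly the quantifier negation you describe (with the integer inequality $>p_{i+1}-1$ rewritten as $\ge p_{i+1}$ and the cases $\ell(p)<2$ handled by Corollary \ref{char}). No further comment is needed.
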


\begin{remark} \label{aftertheor}
$R_\alpha^p$ has full support only if $$\alpha_i< \varrho_i\Leftrightarrow \alpha_i\le \sum_{q=i+1}^{\ell(\alpha)}\alpha_q-p_{i+1},\;\;\text{  $1\le i\le \ell(p)-1$}.$$
\end{remark}
The following is a generalization of \cite[Theorem 3.6]{fullequiv2} to monotone disconnected ribbons with $\ell(p)\le 3$ which contain the monotone connected ribbons of length $\le 4$.
\begin{corollary}\label{cor:full}
In particular,

$(a)$ when  $p=(2,1,0^{\ell(\alpha)-2})$,  $[R_\alpha^p]=[\alpha,(|\alpha|-2,2)]$ if and only if
 \begin{equation}\label{fullineq2}\alpha_1<\varrho_1\Leftrightarrow \alpha_1<\sum_{q=2}^{\ell(\alpha)}\alpha_q.
\end{equation}
$(b)$ when  $p=(3,2,1,0^{\ell(\alpha)-3})$,  $[R_\alpha^p]=[\alpha,(|\alpha|-3,3)]$ if and only if
\begin{equation}\label{fullineq3}\alpha_1<\sum_{q=2}^{\ell(\alpha)}\alpha_q-2\;\;\text{and}\;\;
\alpha_2<\sum_{q=3}^{\ell(\alpha)}\alpha_q.\end{equation}
\end{corollary}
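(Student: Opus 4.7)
The plan is to apply Theorem \ref{corf} directly, specializing the overlapping partition $p$ in each case and computing the rests of order $\varrho_i$ and the slacks $p_{i+1}-1$ from Definition \ref{witness}. Since \eqref{fullineq} involves only $\ell(p)-1$ inequalities indexed by $i\in\{1,\dots,\ell(p)-1\}$, the two cases reduce to checking one and two inequalities, respectively.

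For part $(a)$, with $p=(2,1,0^{\ell(\alpha)-2})$ we have $\ell(p)=2$, so Theorem \ref{corf} produces only the inequality at $i=1$. Substituting $p_2=1$ into $\varrho_1=\sum_{q=2}^{\ell(\alpha)}\alpha_q-p_2+1$ yields $\varrho_1=\sum_{q=2}^{\ell(\alpha)}\alpha_q$, and then $[\varrho_1-\alpha_1]_+\ge p_2=1$ reduces, on integers, to $\alpha_1<\varrho_1$. This is exactly \eqref{fullineq2}.

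For part $(b)$, with $p=(3,2,1,0^{\ell(\alpha)-3})$ we have $\ell(p)=3$, giving inequalities at $i=1$ and $i=2$. A direct specialization gives $\varrho_1=\sum_{q=2}^{\ell(\alpha)}\alpha_q-1$ and $\varrho_2=\sum_{q=3}^{\ell(\alpha)}\alpha_q$. The $i=1$ inequality $[\varrho_1-\alpha_1]_+\ge p_2=2$ becomes $\alpha_1\le \varrho_1-2=\sum_{q=2}^{\ell(\alpha)}\alpha_q-3$, equivalently $\alpha_1<\sum_{q=2}^{\ell(\alpha)}\alpha_q-2$, the first inequality of \eqref{fullineq3}.

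The only step that is not pure substitution is reducing the $i=2$ inequality $[\varrho_2-\alpha_1]_++[\varrho_2-\alpha_2]_+\ge p_3=1$ to the single condition $\alpha_2<\varrho_2$. I would argue this via the partition monotonicity $\alpha_1\ge\alpha_2$: if $\alpha_2\ge\varrho_2$, then also $\alpha_1\ge\varrho_2$, so both bracketed terms vanish and the inequality fails; conversely $\alpha_2<\varrho_2$ already forces $[\varrho_2-\alpha_2]_+\ge 1$. This yields the second inequality of \eqref{fullineq3} and completes the derivation.
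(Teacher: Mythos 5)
Your proposal is correct and follows essentially the same route as the paper: both specialize Theorem \ref{corf} to $\ell(p)=2$ and $\ell(p)=3$, compute $\varrho_1,\varrho_2$ from Definition \ref{witness}, and reduce the $i=2$ condition to $\alpha_2<\varrho_2$ using $\alpha_1\ge\alpha_2$. No gaps.
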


In \cite[Theorem II.1]{fullequiv}, that we reproduce below as Theorem \ref{th:1.2} for the reader convenience, a necessary condition  is given for a connected ribbon with parts at least two, to have  full equivalence class \cite[Definition 7]{fullequiv}.
This necessary condition combined with Theorem \ref{corf} shows that a monotone connected ribbon with parts $\ge 2$ has full equivalence class only if it has full Schur support. That is, full equivalence classes only exist among monotone connected ribbons with full Schur support.
\begin{theorem}\label{th:1.2}\cite[Theorem II.1]{fullequiv} Let $\alpha$ be a partition with parts $\ge 2$ and $R_\alpha $ a connected ribbon. If $\alpha$ has full equivalence class then \begin{equation}\label{N} N_j:=max\{k:\sum_{\begin{smallmatrix}1\le i\le j\\\alpha_i<k\end{smallmatrix}}(k-\alpha_i)\le \ell(\alpha)-j-2\}<\varrho_j, \;1\le j\le \ell(\alpha)-2.\end{equation}
\end{theorem}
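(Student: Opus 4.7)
The plan is a proof by contrapositive: assume $N_{j_0}\ge \varrho_{j_0}$ for some $j_0\in\{1,\dots,\ell(\alpha)-2\}$, and exhibit a rearrangement $\beta$ of the parts of $\alpha$ together with a partition $\nu$ of $|\alpha|$ with $\nu\in [R_\beta]\setminus[R_\alpha]$; this shows $[R_\beta]\ne[R_\alpha]$, so $\alpha$ has no full equivalence class. Since the sum $\sum_{i\le j,\,\alpha_i<k}(k-\alpha_i)$ appearing in the definition of $N_j$ is non-decreasing in $k$, the inequality $N_{j_0}\ge\varrho_{j_0}$ forces
\[
\sum_{i=1}^{j_0}[\varrho_{j_0}-\alpha_i]_+\;\le\;\ell(\alpha)-j_0-2\;=\;p_{j_0+1}-1,
\]
where $p=(\ell(\alpha)-1,\dots,2,1,0)$ is the overlap partition of the monotone connected ribbon $R_\alpha$. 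This is exactly inequality \eqref{lineq} of Theorem \ref{Tchar} at index $i=j_0$: the $j_0$-witness vector $\tilde g^{j_0}$ of $R_\alpha$ fits its slack $p_{j_0+1}-1$.

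Theorem \ref{Tchar} then supplies a partition
\[
\nu\;=\;\bigl(\alpha_1+\tilde g_1^{j_0},\dots,\alpha_{j_0}+\tilde g_{j_0}^{j_0},\ \varrho_{j_0},\ p_{j_0+1}-1-|\tilde g^{j_0}|\bigr)^{+}
\]
of $|\alpha|$ that lies in the Schur interval $[\alpha,(|\alpha|-p_1,p_1)]$ but not in $[R_\alpha]$; indeed its $(j_0+1)$-th part $\varrho_{j_0}$ violates the bound $\nu_{j_0+1}\le \sum_{q\ge j_0+1}\alpha_q-p_{j_0+1}$ of Theorem \ref{ThmNec}. So half of the contradiction is already in place: $[R_\alpha]$ misses $\nu$ from its Schur interval, and it remains only to display a rearrangement $\beta$ of $\alpha$ whose support does contain $\nu$.

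The natural candidate is a $\beta$ obtained from $\alpha$ by a single adjacent transposition that breaks the overlap pattern near position $j_0+1$, producing a connected ribbon whose overlap partition is strictly smaller than $p$ at that coordinate and whose Schur interval therefore enlarges enough to admit $\nu$. To certify $\nu\in[R_\beta]$ the cleanest route is Theorem \ref{th:gessel}: exhibit a standard Young tableau of shape $\nu$ whose descent set equals $\mathcal{S}(\beta)$, using the coordinates of $\tilde g^{j_0}$ to place the last few entries so that descents occur exactly at the positions prescribed by $\beta$. The main obstacle is this final verification: the partition $\nu$ has been engineered to sit precisely on the boundary of the Schur interval of $R_\alpha$, so $\beta$ must loosen the overlap at index $j_0+1$ just enough, and no more, for $\nu$ to enter the support of $R_\beta$ without leaving its (still restricted) Schur interval. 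I would split on whether $\tilde g^{j_0}=0$ (so $\alpha_{j_0}\ge\varrho_{j_0}$ and one can peel off a single long row, reducing to a smaller instance) or $\tilde g^{j_0}\ne 0$ (where several coordinates contribute and a bespoke SYT construction via the canonical-filling technology of Section \ref{sec:canonicalfilling} is required); in either case, the descent-set bookkeeping under the chosen transposition is the crux of the argument.
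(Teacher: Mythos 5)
First, a point of orientation: this paper does not prove Theorem \ref{th:1.2} at all; it is imported verbatim from \cite[Theorem II.1]{fullequiv}, and what the paper itself establishes is only Lemma \ref{lem:equiv} (the equivalence of \eqref{N} with the full-support inequality \eqref{fullineq}) together with its consequence, Theorem \ref{fullequivschur}. So there is no internal proof to compare against, and your attempt must stand on its own.

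Your first half is correct and coincides with the paper's Lemma \ref{lem:equiv} combined with Theorem \ref{Tchar}: monotonicity of $k\mapsto\sum_{1\le i\le j_0}[k-\alpha_i]_+$ turns $N_{j_0}\ge\varrho_{j_0}$ into $|\tilde g^{j_0}|\le p_{j_0+1}-1$, and Theorem \ref{Tchar} then yields a partition $\nu$ in the Schur interval with $\nu_{j_0+1}=\varrho_{j_0}$, hence $\nu\notin[R_\alpha]$. The genuine gap is the second half, which is the actual content of the theorem: you must exhibit a rearrangement $\beta$ of $\alpha$ and \emph{prove} $\nu\in[R_\beta]$, and this is only announced as a plan (``the natural candidate is\dots'', ``I would split on whether\dots''), never executed. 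It also cannot be finessed by appealing to the inequalities of Theorem \ref{ineqcomposition}: for non-monotone compositions the paper proves those inequalities are only \emph{necessary}, and their sufficiency is precisely the open conjecture of Section \ref{conj}. Your $\nu$ sits exactly on the boundary, $\nu_{j_0+1}=\sum_{q\ge j_0+1}\alpha_q-p_{j_0+1}+1$, so any admissible $\beta$ must satisfy $p^\beta_{j_0+1}\le p_{j_0+1}-1$, and even then you still owe an explicit SYT of shape $\nu$ with descent set $\mathcal{S}(\beta)$ (Theorem \ref{th:gessel}) or an explicit LR filling of $R_\beta$ of content $\nu$. The paper only carries out such a construction in length three (Corollary \ref{cor:length3}, where $R_{(\beta_2,\beta_1^+,\beta_3)}$ is shown to have full support by a direct tableau argument); for general length this construction is the heart of the Gaetz--Hardt--Sridhar proof and is absent from your write-up.
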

For monotone connected ribbons, inequality \eqref{N} is equivalent to  inequality \eqref{fullineq} in Theorem \ref{corf} characterizing full Schur support.
\begin{lemma}\label{lem:equiv} For all $ j\in\{1,\dots, \ell(\alpha)-2\}$,\begin{equation} N_j:=max\{k:\sum_{\begin{smallmatrix}1\le i\le j\\\alpha_i<k\end{smallmatrix}}(k-\alpha_i)\le \ell(\alpha)-j-2\}<\varrho_j\Leftrightarrow \sum_{\begin{smallmatrix}1\le i\le j\\\alpha_i<\varrho_j\end{smallmatrix}} (\varrho_j-\alpha_i)\ge \ell(\alpha)-j-1.\end{equation}
\end{lemma}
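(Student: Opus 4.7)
The plan is to recast the definition of $N_j$ as the largest integer below a certain level set of a monotone step function, and then observe that the equivalence reduces to evaluating that function at $k=\varrho_j$.

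First I would introduce the auxiliary function
\[
f(k) \;:=\; \sum_{\substack{1\le i\le j\\\alpha_i<k}} (k-\alpha_i),
\]
defined on positive integers $k$. I would verify that $f$ is non-decreasing in $k$: if $k'\ge k$ then every term $k-\alpha_i$ in the sum defining $f(k)$ is replaced by a term $k'-\alpha_i\ge k-\alpha_i$, and possibly new non-negative terms (for indices with $k\le \alpha_i<k'$) are added. Consequently the set $\{k\in\bkN:f(k)\le \ell(\alpha)-j-2\}$ is a down-set in $\bkN$, so $N_j=\max\{k:f(k)\le \ell(\alpha)-j-2\}$ is well defined and
\[
f(N_j)\le \ell(\alpha)-j-2 \quad\text{and}\quad f(N_j+1)\ge \ell(\alpha)-j-1.
\]

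Next I would translate the strict inequality $N_j<\varrho_j$. Since $\varrho_j$ and $N_j$ are integers, $N_j<\varrho_j$ is equivalent to $\varrho_j\notin\{k:f(k)\le \ell(\alpha)-j-2\}$, i.e.\ to $f(\varrho_j)\ge \ell(\alpha)-j-1$, by the monotonicity of $f$. Conversely, if $f(\varrho_j)\ge \ell(\alpha)-j-1$, then every $k\ge \varrho_j$ is outside that down-set, so $N_j<\varrho_j$. Therefore
\[
N_j<\varrho_j \;\Longleftrightarrow\; f(\varrho_j)\ge \ell(\alpha)-j-1,
\]
and unfolding the definition of $f(\varrho_j)$ gives precisely the right-hand side of the claimed equivalence.

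There is no significant obstacle here; the only thing to keep an eye on is that $N_j$ is implicitly defined over integer $k$ (so strict inequality of integers behaves as expected), and that the monotonicity of $f$ is indeed strict once $k$ exceeds $\min_{1\le i\le j}\alpha_i$ but only needs to be weak monotonicity for this argument. I would end with a one-line remark that this shows Gaetz--Hardt--Sridhar's necessary condition \eqref{N} coincides, term by term in $j$, with the full-support inequalities \eqref{fullineq} of Theorem \ref{corf} after the index shift $i\leftrightarrow j$.
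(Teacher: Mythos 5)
Your proof is correct and follows essentially the same route as the paper's, which is considerably terser: it merely records that $\sum_{\substack{1\le i\le j\\ \alpha_i<N_j}}(N_j-\alpha_i)\le \ell(\alpha)-j-2$ by definition of $N_j$ and then asserts the equivalence. Your explicit introduction of the non-decreasing function $f(k)$ and the down-set structure of $\{k: f(k)\le \ell(\alpha)-j-2\}$ supplies exactly the details the paper leaves implicit.
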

In addition, combining  Theorem \ref{corf} with  \cite[Theorem 3.6]{fullequiv2}, one has
\begin{theorem} \label{fullequivschur}Let $\alpha$ be a partition with parts $\ge 2$ and $R_\alpha $ a connected ribbon. If $\alpha$ has full equivalence class then $R_\alpha$ has full support.  When $\ell (\alpha)\le 4$, $\alpha$ has full equivalence class if and only if  $R_\alpha$ has full support.
\end{theorem}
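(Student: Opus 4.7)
The plan is to chain together three results already at hand: the full-support criterion in Theorem \ref{corf}, the Gaetz--Hardt--Sridhar necessary condition of Theorem \ref{th:1.2}, and the equivalence of the two families of inequalities delivered by Lemma \ref{lem:equiv}. Throughout, I will use that for a connected ribbon $R_\alpha$ with parts $\ge 2$ the overlapping partition is the full staircase $p=(\ell(\alpha)-1,\ell(\alpha)-2,\ldots,1,0)$, so $\ell(p)=\ell(\alpha)-1$ and $p_{j+1}=\ell(\alpha)-j-1$ for $1\le j\le \ell(\alpha)-2$. Under this identification the right-hand side inequality $\sum_{1\le i\le j,\,\alpha_i<\varrho_j}(\varrho_j-\alpha_i)\ge \ell(\alpha)-j-1$ of Lemma \ref{lem:equiv} becomes $\sum_{i=1}^{j}[\varrho_j-\alpha_i]_+\ge p_{j+1}$, which is exactly the system \eqref{fullineq} of Theorem \ref{corf}.

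For the first assertion, I would assume $\alpha$ has full equivalence class. Theorem \ref{th:1.2} then supplies $N_j<\varrho_j$ for each $1\le j\le \ell(\alpha)-2$, Lemma \ref{lem:equiv} converts each such inequality into the corresponding \eqref{fullineq}, and Theorem \ref{corf} concludes that $[R_\alpha]=[\alpha,(|\alpha|-p_1,p_1)]$, i.e., $R_\alpha$ has full Schur support. This first half is a clean three-step substitution.

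For the converse in the range $\ell(\alpha)\le 4$, I would invoke \cite[Theorem 3.6]{fullequiv2}, which treats full equivalence class for connected ribbons of length at most four. When $\ell(\alpha)\in\{2,3\}$ there is essentially nothing to prove: $\ell(\alpha)=2$ forces $\ell(p)=1$, so Theorem \ref{corf} gives full Schur support automatically and full equivalence class is trivial by $\pi$-rotation symmetry; for $\ell(\alpha)=3$ one has $p=(2,1,0)$ and Corollary \ref{cor:full}(a) reduces full Schur support to the single condition $\alpha_1<\alpha_2+\alpha_3$, which is precisely the hypothesis under which \cite[Theorem 3.6]{fullequiv2} yields full equivalence class. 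For $\ell(\alpha)=4$ one has $p=(3,2,1,0)$ and Corollary \ref{cor:full}(b) identifies full Schur support with the two inequalities \eqref{fullineq3}, and \cite[Theorem 3.6]{fullequiv2} shows that exactly these inequalities imply full equivalence class. Combining with the first assertion gives the stated equivalence in this range.

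The only real obstacle is a bit of bookkeeping in the last paragraph: one must verify case by case that the explicit inequalities of \cite[Theorem 3.6]{fullequiv2} agree with those of Corollary \ref{cor:full} for each value $\ell(\alpha)\in\{2,3,4\}$, so that the translation between the two formulations drops no case and duplicates none.
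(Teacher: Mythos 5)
Your proposal is correct and follows essentially the same route as the paper: the first assertion is exactly the chain Theorem \ref{th:1.2} $\to$ Lemma \ref{lem:equiv} $\to$ Theorem \ref{corf} (the paper phrases it contrapositively, starting from failure of full support and deducing $N_t\ge\varrho_t$, but the content is identical), and the converse for $\ell(\alpha)\le 4$ is handled, as you do, by matching the inequalities of Corollary \ref{cor:full} against the external characterizations in \cite{fullequiv2}. The only cosmetic difference is that the paper cites \cite[Theorem 3.4]{fullequiv2} for the length-3 case and \cite[Theorem 3.6]{fullequiv2} only for length 4, whereas you attribute both to Theorem 3.6.
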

Proofs of main results  will be delayed until sections \ref{sec:positivity}, \ref{sec:full} and \ref{sec:fullequiv}.

\subsection{Organization of the paper} This paper is organized in seven sections with the following contents.
The next section, divided in seven subsections, contains the basic  terminology, definitions and results that we shall be using throughout the paper. We highlight the concepts of  descent set of a semistandard Young tableau {\em versus} SYT and Proposition \ref{prop:desc} in Subsection \ref{subsec:descent}, the combinatorial interpretation of dominance order on partitions, in Subsection \ref{subsec:domin}, enlightening inequalities \eqref{ineq}, and companion tableau of an LR tableau, in Subsection \ref{subsec:comp}, our key tool in the  proof of the existence of a monotone ribbon LR filling with given shape and content or the positivity of a ribbon LR coefficient.

 Section \ref{sec:ribbon} is divided in four subsections. Subsection \ref{subsec:overlap} defines (Definition \ref{overlap}) and discusses overlapping partition of a ribbon, with row lengths at least two, that we shall use in  the (connected or not) monotonic case, and, in the last section, in the  connected case with row lengths in any order. It is shown that  monotone ribbons not necessarily connected   are uniquely defined by  the row lengths partition and the overlapping partition. It is recalled in Subsection \ref{subsec:ribcomp} that the descent set  of a standard Young tableau  detects the companion tableau of a LR connected ribbon.
 The enumerative characterization of LR connected ribbon coefficients $c_{R_\alpha}^\nu$ in Theorem \ref{th:gessel} is generalized  to disconnected ribbons.

 Given $T$ a SSYT of shape $\nu$ and weight $\alpha$
the descent set of the standardization of $T$ is a subset of $\mathcal{S}(\alpha)$. As our study reduces to ribbons $R_\alpha$ with $\alpha$ a partition, the serious rejection for $T$ to be a companion tableau for a LR ribbon of shape $R_\alpha$ occurs when it leads to a filling of $R_\alpha$ with the same  letter in a  column of length two.  In Subsection \ref{sec:critic}, we   translate the numbers  in $\mathcal{S}(\alpha)$ and not in the descent set of the standardized $T$, giving rise to the aforesaid violation, to  the {\em critical numbers set of $T$},  a subset of $\{2,\dots,\ell(\alpha)\}$.
In addition, as our monotone ribbons may be disconnected,  the overlapping partition is used to detect the effectiveness of the critical numbers of a companion tableau  of a LR ribbon of shape $R_\alpha^p$, as explained in Subsection \ref{sec:efectcritic}.

Section \ref{sec:positivity} gives the proof of Theorem \ref{ThmNec} which determines by means of a set of linear inequalities on the partition triple $(\alpha, p, \nu)$,  the positivity $c_{R_{\alpha}^p}^{\nu}>0$ without determining its actual value.
Assuming the linear inequalities on the  right hand side of  \eqref{ineq}, the goal is  to exhibit  a companion tableau for a LR filling of the shape $R^p_\alpha$.
   The semistandard tableau  of shape $\nu$ and weight $\alpha$ with {\em canonical filling} (Subsection \ref{sec:canonicalfilling})  is picked,  and then if necessary one modifies its filling according to a certain {\em rotation} procedure to avoid $p$-effective critical numbers so that the new tableau  is a companion tableau of an LR filling with weight $\nu$ of the shape $R^p_\alpha$.
The linear inequalities on the right hand side of \eqref{ineq} guarantee that our rotation procedure is successful.
Section \ref{sec:full} gives the proof of Theorem \ref{Tchar} and Theorem \ref{corf}, logically equivalent, which classify the monotone ribbons with full Schur support, and Corollary \ref{cor:full} which gives a simple version of those inequalities in the case where the overlapping partition has at most length four. Illustrative examples are also provided.

In section \ref{sec:fullequiv}, the bridge between the classification of monotone connected ribbons with full Schur support and those with full equivalence class \cite{fullequiv} is established. More precisely, Lemma \ref{lem:equiv} shows  that for monotone connected ribbons, the inequality \eqref{N}, in Theorem \ref{th:1.2}, \cite[Theorem II.1]{fullequiv}, giving a necessary condition for full equivalence class, is equivalent to the inequality \eqref{fullineq}, in Theorem \ref{corf}, characterizing the full Schur support. The bridge allows to prove  Theorem \ref{fullequivschur} which states that every partition with full equivalence class has full Schur support. Instances on the coincidence of these two classifications are provided.
More importantly, Corollary \ref{cor:length3} shows, as observed in Remark \ref{compfull},  that a non monotone connected ribbon of length three may have full Schur support while its monotone rearrangement does not have.

Section \ref{conj} generalizes,  in Theorem \ref{ineqcomposition}, the necessary condition, in Theorem \ref{ThmNec}, for the LR coefficient $c_{R_{\alpha}}^{\nu}$ positivity, with $\alpha$  a partition,  to  connected ribbons $R_{\beta}$ with $\beta$  a composition. Remark \ref{re:conj} shows that if these inequalities on the triple $(\beta, p, \nu)$ with $\beta$ a composition and $p$ the overlapping partition of $R_\beta$, are also sufficient, then the classification on partitions having full equivalence class and full Schur support is the same, and, henceforth, the Gaez-Hardt-Shridar conjecture \cite[Conjecture II.4]{fullequiv} claiming that the necessary condition \eqref{N} for a partition to have full equivalence class is also sufficient, is true.

{\bf  Acknowledgements.} We are thankful to   the  organizers of workshop Positivity in Algebraic Combinatorics, BIRS, Banff, Alberta, August 14-16, 2015, for the opportunity to present our work on full Schur supports,  to Jo\~ao Gouveia for useful discussions and suggesting the phrasing of witness vector with its slack which allowed  economy and clarification in our redaction, and  to M. Gaez, W. Hardt, S. Sridhar and P. Pylyavskyy for letting us know the paper \cite{fullequiv} on full equivalence classes.

\section{Preliminaries}
\label{sec:pre}
 \subsection{Partitions, compositions  and tableaux}
 \label{sec:standard}A partition $\lambda$ is an ordered  list of positive integers $\lambda_1\ge \lambda_2\ge\cdots\ge \lambda_{\ell(\lambda)}>0$ where $\lambda_i$ are the {\em parts} and  $\ell(\lambda)$ the {\em length} of $\lambda$.
We say that $|\lambda|:=\sum_{i=1}^{\ell(\lambda)}\lambda_i$ is the {\em size} of $\lambda$ and that $\lambda$ is a partition of $|\lambda|$. It is convenient to set $\lambda_k=0$ for $k> \ell(\lambda)$.
The {\em Young diagram} of the partition $\lambda=(\lambda_1,\lambda_2,\ldots,\lambda_{\ell(\lambda)})$, or Young diagram of shape $\lambda$,   is the collection of $|\lambda|$ boxes arranged in $\ell(\lambda)$ left-aligned rows, in the lower right quadrant of the plane, where the $i$th row has $\lambda_i$ boxes, for $1\leq i\leq \ell(\lambda)$. We shall identify a partition with its Young diagram. Given the partition $\lambda$, the conjugate or transpose partition $\lambda'$  is the  partition obtained by transposing the Young diagram of $\lambda$.
A filling $T$ of a Young diagram of shape $\lambda$ with positive integers is called {\em semistandard} if the integers increase weakly across rows (row semistandard condition) and strictly down columns (column standard condition). Such a filled-in Young diagram of shape $\lambda$ is called a {\em semistandard Young tableau} (SSYT) $T$  of shape $\lambda$. The {\em weight} or {\em content} of a SSTY is the sequence  $\alpha=(\alpha_1,\alpha_2,\ldots)$, where $\alpha_i$ is the number of integers $i$ in the filling of the tableau.

A {\em composition} $\alpha$ with $\ell(\alpha)$ parts is a sequence of $\ell(\alpha)$ positive  integers.
The partition $\alpha^+$  is the monotone nonincreasing rearranging of $\alpha$.
The size of $\alpha$ is defined to be $|\alpha|:=|\alpha^+|$, in which case we say $\alpha$ is a composition of $|\alpha|$.  The length of $\alpha$ is $\ell(\alpha)=\ell(\alpha^+)$.  If $\beta=(\beta_1,\dots,\beta_{\ell{(\beta)}} )$ is another composition,  we define  the {\em concatenation of $\alpha$ and $\beta$} to be the composition $\alpha.\beta=(\alpha_1,\ldots,\alpha_{\ell(\alpha)},\beta_1,\dots,\beta_{\ell(\beta)})$ of length $\ell(\alpha)+\ell(\beta)$.

 We denote by $Tab(\lambda,\alpha)$ the set of all SSYTs of shape $\lambda$ and content the {\em composition} $\alpha$.
For $\lambda$ a partition and $\alpha$ a composition of $|\lambda|$, the {\em Kostka number} $K_{\lambda,\alpha}$ is defined to be
$K_{\lambda,\alpha}:=\#Tab(\lambda,\alpha)$.

A {\em skew shape} or ({\em skew Young diagram})  $\lambda/\mu$ is  obtained by removing the Young diagram  $\mu$ from the top-left corner of the Young diagram $\lambda$, when  $\mu$ is contained in $\lambda$  as Young diagrams, or equivalently, when $\mu_i\le \lambda_i$, for all $i\ge 1$. In particular,  when $\mu$ is the empty partition $0$, we have $\lambda/0=\lambda$. The size of $\lambda/\mu$ is $|\lambda/\mu|:=|\lambda|-|\mu|$.
 An {\em horizontal strip} is a skew diagram which has at most one box in each column.
  The {\em basic form} of a skew shape is the skew diagram obtained  by deleting any empty row and any empty column.
 The skew shape $\lambda/\mu$ in the basic form defines the composition $\lambda-\mu$ that we simply write $\lambda/\mu$  if there is no danger of confusion.
A skew shape is said to be {\em connected} if there exists a path between any two boxes of the diagram using only north, east, south and west steps such that the path is contained in the diagram.
A SSYT of skew shape $\lambda/\mu$ and weight $\nu$ is a  semistandard filling of the the skew-shape $\lambda/\mu$ of weight $\nu$.
\subsection{Descent set of a standard tableau}
\label{subsec:descent}
If a SSYT $T$ of size $n$ ($n$ boxes) has entries in $[n]:=\{1,2,\dots,n\}$, each necessarily
appearing exactly once, then $T$ is said to be a {\em standard Young tableau} (SYT).

A SSYT $T$ in $Tab(\lambda,\alpha)$  may also be regarded as  a sequence  $0=\lambda^0\subseteq \lambda^1\subseteq\cdots \subseteq\lambda^{\ell(\alpha)}=\lambda$ of partitions such that  each skew shape $\lambda^i/\lambda^{i-1}$  is an horizontal strip of size $\alpha_i$. Simply insert an $i$ in each box of the strip $\lambda^i/\lambda^{i-1}$ \cite{stanley}.
The {\em standard order} on a semistandard Young tableau
 is the numerical ordering of the labels with priority, in the case of equality,
given by the rule southwest=smaller, northeast=larger. The standardization $\widehat T$ of a semistandard tableau $T\in Tab(\lambda,\alpha)$  is the enumeration of the
 labeled boxes according to the standard order of $T$, that is, the enumeration of the boxes across the sequence  $0=\lambda^0\subseteq \lambda^1\subseteq\cdots \subseteq\lambda^{\ell(\alpha)}$ where each horizontal strip $\lambda^i/\lambda^{i-1}$ of size $\alpha_i$ is  read SW-NE.
For instance,
the following are SSYT's with shape $\lambda=(4,3,2)$ and content $\alpha=(2,4,2,1)$, and their standardizations, respectively:
\begin{equation}\label{tab}T=\young(1122,223,34)\;  Q=\young(1122,224,33)\in Tab(\lambda,\alpha),\;  \quad \widehat T=\young(1256,348,79)\; \widehat Q=\young(1256,349,78).\end{equation}

The
{\em descent set} $\mathcal{D}(U)$ of a SYT $U$ of shape $\lambda$ is defined  to be the subset of $[|\lambda|-1]$ formed by those entries $i$ of $U$ for which $i + 1$ appears in
a strict lower row  of $U$ than $i$. There is a one-to-one natural correspondence between subsets of $[|\lambda|-1]$ and compositions of $|\lambda|$ \cite{stanley,foulkes}.
The composition $\alpha=(\alpha_1,\dots,\alpha_{\ell(\alpha)})$
  gives rise to the  subset $\mathcal{S}(\alpha):=\{\alpha_1, \alpha_1+\alpha_2,\ldots,\alpha_1+\alpha_2+\cdots+\alpha_{\ell(\alpha)-1}\}$, with cardinality $\ell(\alpha)-1$, of $[|\alpha|-1]$, and {\em vice-versa}. Hence a SYT of shape $\lambda$
has descent set $\mathcal{S}(\alpha)$ for some composition $\alpha$ of $|\lambda|$.
  In \eqref{tab}, for example, \begin{equation}\mathcal{D}(\widehat T)=\{2,6,8\}=\mathcal{S}(\alpha)\quad\text{ and}\quad
\mathcal{D}(\widehat Q)=\{2,6\}\subsetneqq\mathcal{S}(\alpha).\end{equation} However, $\widehat Q$ is also the standardization of $V=\young(1122,223,33)\in Tab(\lambda,\beta=(2,4,3))$ with $\beta=(\alpha_1,\alpha_2,\alpha_3+\alpha_4)$. In particular,  $Tab((4,0), (2,2))$ has a sole element whose standardization has   descent set the empty set, and  $Tab((4,4), (4,2,2))$ has a sole element
whose standardization has  descent set $\{4\}\varsubsetneqq \mathcal{S}(4,2,2)$.

Given  $T\in Tab(\lambda,\alpha)$, the {\em descent set $\mathcal{D}(T)$} of the SSYT $T$ is   the subset $\mathcal{S}$ of $\{1,\dots,\ell(\alpha)-1\}$ that consists of $s\in\{1,\dots,\ell(\alpha)-1\}$ for which there exists  a pair of entries  $s$ and $s+1$ in $T$ such that  $s+1$ appears in a strict lower row of $T$ than $s$. When $T$ is a SYT, that is, $T\in Tab(\lambda,(1^{|\lambda|}))$, we recover the notion of descent set in a SYT, where $\mathcal{D}(T)$ is a subset $\mathcal{S}$ of $[|\lambda|-1]$. We show next that
 a SYT of shape $\lambda$ has descent set $\mathcal{S}(\alpha)$ if and only if it is the standardization of some SSYT in $Tab(\lambda,\alpha)$ with descent set $\mathcal{ S}=\{1,\dots,\ell(\alpha)-1\}$.
A SYT of shape $\lambda$ has  descent set  $  \mathcal{S}(\beta)\subseteq \mathcal{S}(\alpha)$ if and only if it is the standardization of some SSYT in $Tab(\lambda,\alpha)$ with descent set  $\mathcal{ S}\subseteq\{1,\dots,\ell(\alpha)-1\}$
 and $\mathcal{S}(\beta)=\{\sum_{j=1}^s\alpha_j: s\in \mathcal{S}\}\subseteq \mathcal{S}(\alpha)$.

\begin{proposition} \label{prop:desc} Given  a partition $\lambda$ and a composition  $\alpha$ of $|\lambda|$, there exists a bijection between $Tab(\lambda,\alpha)$ and the set of all
 SYT's of shape $\lambda$ with descent set a subset of $ \mathcal{S}(\alpha)$,  defined by the map $T\mapsto \widehat T$.
 Moreover, if $T\in Tab(\lambda,\alpha)$ and $\mathcal{S}=\{s_1<\cdots<s_{|\mathcal{S}|}\}$
 then  $\mathcal{D}(\widehat T)=\{\sum_{j=1}^s\alpha_j: s\in \mathcal{S}\}=\mathcal{S(\beta)}\subseteq \mathcal{S}(\alpha)$
 with $\beta=(\beta_1,\dots,\beta_{|\mathcal{S}|}, |\alpha|-\beta_{|\mathcal{S}|})$ such that $\beta_i-\beta_{i-1}=\sum_{j=1}^{s_i}\alpha_j-\sum_{j=1}^{s_{i-1}}\alpha_j$, $1\le i\le |\mathcal{S}|$ and $\beta_0:=0$.
\end{proposition}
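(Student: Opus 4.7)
My plan is to prove (i) well-definedness of $T\mapsto\widehat T$ together with the explicit formula for $\mathcal{D}(\widehat T)$, and (ii) bijectivity via an explicit inverse (\emph{de-standardization}); step (ii) will be the main technical one.

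For (i), I would exploit the block structure of $\widehat T$: the $\alpha_i$ copies of $i$ in $T$ occupy the horizontal strip $\lambda^i/\lambda^{i-1}$ and receive the consecutive labels $\alpha_1+\cdots+\alpha_{i-1}+1,\dots,\alpha_1+\cdots+\alpha_i$ in SW--NE order. Since at most one box of the strip lies in each column, the cells of the strip, read left to right, have weakly decreasing row index. Hence within the $i$-th block, passing from label $k$ to $k+1$ either moves right in the same row or jumps to a strictly higher row, so no descent of $\widehat T$ can lie strictly inside a block and $\mathcal{D}(\widehat T)\subseteq\mathcal{S}(\alpha)$. Moreover, the block boundary $\alpha_1+\cdots+\alpha_s$ is a descent of $\widehat T$ precisely when the bottom-most $s+1$ of $T$ lies strictly below the top-most $s$ of $T$, which in turn is equivalent to the existence of \emph{some} $s+1$ strictly below \emph{some} $s$ in $T$, i.e.\ to $s\in\mathcal{D}(T)$. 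This yields $\mathcal{D}(\widehat T)=\{\sum_{j=1}^s\alpha_j:s\in\mathcal{D}(T)\}$, and reading these numbers as partial sums of a composition of $|\alpha|$ identifies them with $\mathcal{S}(\beta)$ in the statement.

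For (ii), given a SYT $U$ of shape $\lambda$ with $\mathcal{D}(U)\subseteq\mathcal{S}(\alpha)$, I define $T$ by replacing every entry of $U$ lying in the range $(\alpha_1+\cdots+\alpha_{i-1},\alpha_1+\cdots+\alpha_i]$ by $i$. The content is $\alpha$ by construction and the weak row-increase of $T$ is immediate from the strict row-increase of $U$. The main obstacle is strict column-increase: if two cells $(r,c)$ and $(r+1,c)$ both carried the same letter $i$ in $T$, then the corresponding values $u<u'$ in $U$ would both lie in the $i$-th block; walking along the chain $u,u+1,\dots,u'$ in $U$, the row index starts at $r$ and ends at $r+1>r$, so some consecutive pair $k,k+1$ in the chain must have $k+1$ strictly below $k$. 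This index $k$ lies strictly inside the $i$-th block, so $k\notin\mathcal{S}(\alpha)$, contradicting $\mathcal{D}(U)\subseteq\mathcal{S}(\alpha)$. Thus $T\in Tab(\lambda,\alpha)$. Finally, the same non-descent argument applied within each block shows that the cells of $U$ carrying values of the $i$-th block have weakly decreasing row index, so the order in which those cells receive their values in $U$ coincides with the SW--NE reading used in standardizing $T$; therefore $\widehat T=U$, proving that $T\mapsto\widehat T$ is a bijection with the stated inverse.
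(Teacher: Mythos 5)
Your proposal is correct and follows essentially the same route as the paper's proof: both identify the descents of $\widehat T$ with block boundaries via the horizontal-strip structure of the standardization, and both invert by de-standardizing a SYT $U$ with $\mathcal{D}(U)\subseteq\mathcal{S}(\alpha)$ back to a tableau of content $\alpha$. The only differences are cosmetic — you verify column-strictness of the de-standardized filling explicitly via the chain argument from $u$ to $u'$ (a point the paper leaves implicit), and you bypass the paper's intermediate passage through the coarser content $\beta$.
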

\begin{proof} Let $0=\lambda^0\subseteq \lambda^1\subseteq\cdots \subseteq\lambda^{\ell(\alpha)}=\lambda$ be the sequence of partitions defining $T\in Tab(\lambda,\alpha)$.  The standardization $\widehat T$ of a SSYT $T\in Tab(\lambda,\alpha)$ is the enumeration of the boxes across the sequence  $0=\lambda^0\subseteq \lambda^1\subseteq\cdots \subseteq\lambda^{\ell(\alpha)}$ defining $T$ where each horizontal strip $\lambda^i/\lambda^{i-1}$ of size $\alpha_i$ is  read SW-NE. This means that $\widehat T$ is a SYT of shape $\lambda$ and its { descent set} $\mathcal{D}(\widehat T)=\{\sum_{j=1}^s\alpha_j: s\in \mathcal{S}\}\subseteq \mathcal{S}(\alpha)$ where $\mathcal{S}$ consists of $s\in\{1,\dots,\ell(\alpha)-1\}$ for which the most SW box in  $\lambda^{s+1}/\lambda^{s}$ appears strictly  below the most NE box in $\lambda^{s}/\lambda^{s-1}$.

  Given $U$  a SYT of shape $\lambda$ and with descent set $\mathcal{S}(\beta)\subseteq \mathcal{S}(\alpha)$  for some composition $\beta$ of $|\lambda|$,
  the standardization may be reversed  to give  a SSYT in $Tab(\lambda,\beta)$. A SYT of shape $\lambda$ with descent set $\mathcal{S}(\beta)$ defines the sequence of partitions $0=\theta^0\subseteq \theta^1\subseteq\cdots \subseteq\theta^{\ell(\beta)}=\lambda$ where each $\theta^j$ consists of the $\beta_1+\cdots+\beta_j$ boxes of $U$  with the entries given by $[ \beta_1+\cdots+\beta_j]$. Therefore
  filling each horizontal strip $\theta^j/\theta^{j-1}$  with $\beta_j $ $j$'s, for all $j\in [ \ell(\beta)]$ gives a SSYT in $Tab(\lambda,\beta)$.  Because  $\mathcal{D}(U)=\mathcal{S}(\beta)\subseteq \mathcal{S}(\alpha)$, given $j\in [ \ell(\beta)]$,  $\beta_j=\alpha_{k+1}+\cdots+\alpha_{k+d}$ for some  $\{k+1,\dots,k+d\}\subseteq [\ell(\alpha)]$. Then we may fill the $\beta_j$ boxes of the horizontal strip $\theta^j/\theta^{j-1}$, from SW-NE, with $\alpha_{k+1}$ $k+1$'s, $\alpha_{k+2}$ $k+2$'s, $\dots$, $\alpha_{k+s}$ $k+d$'s to obtain a SSYT in $Tab(\lambda,\alpha)$.
\end{proof}

\subsection{Dominance order on partitions} \label{subsec:domin} The {\em dominance order} on partitions of the same size $n$, is defined  by setting $\lambda\preceq\mu$ if $|\lambda|=|\mu|=n$ and
$$\lambda_1+\cdots+\lambda_i\leq\mu_1+\cdots+\mu_i,$$
for $i=1,\ldots,\min\{\ell(\lambda),\ell(\mu)\}$. Equivalently,  the Young diagram of $\mu$ is obtained by {\em lifting} at least one box in the Young diagram of $\lambda$. Observe that $\lambda\preceq\mu$ if and only if $\mu'\preceq\lambda'$. The pair
 $(P_n,\preceq)$  with $P_n$ the set of all partitions of $n$ is a lattice with maximum element $(n)$ and minimum element $(1^n)$, and is self dual under the map which sends each partition to its conjugate.
 The interval $[\lambda, \mu]$ in $P_n$ denotes the set of  all partitions $\nu$ such that
$\lambda \preceq \nu \preceq  \mu$.

\begin{remark} \label{re:ineqdom} Note that if $\lambda\preceq \mu$,
 the inequalities $\displaystyle \mu_i\leq \sum_{q= i}^{\ell(\lambda)}\lambda_q=\lambda_i+\sum_{q= i+1}^{\ell(\lambda)}\lambda_q$, for $1\le i\le \ell(\lambda)$, are always satisfied. For $1\le i\le \ell(\lambda)$, either $\mu_i$ is obtained by lifting boxes from  $(\lambda_{i+1},\dots,\lambda_{\ell(\lambda)})$ to $\lambda_i$,  in which case, $\lambda_i\le \mu_i\leq \lambda_i+\sum_{q= i+1}^{\ell(\lambda)}\lambda_q$, or $\mu_i$ is obtained by lifting boxes from $\lambda_i$ to $(\lambda_{1},\dots,\lambda_{i-1})$, in which case, $ \mu_i\leq \lambda_i\leq \lambda_i+\sum_{q= i+1}^{\ell(\lambda)}\lambda_q$.
\end{remark}

 \subsection{The canonical filling  in $Tab(\nu,\alpha)$}
 \label{sec:canonicalfilling}
 Let $\alpha$ be  an arbitrary  composition
and $\nu$  a partition such that $|\nu|=|\alpha|$.
We exhibit a representative element  of $ Tab(\nu,\alpha)$, see also \cite{JaVi}.  The proof provides an $\alpha$-weight canonical filling of a Young diagram of shape $\nu$. The canonical  filling enjoys   descent properties to be used later, see Section \ref{sec:critic} and Proposition \ref{critic}.

\begin{lemma}\label{kostkaproof}
Let $\alpha$ be  any composition and $\nu$ a partition such that $\alpha^+\preceq\nu$. Then,
$K_{\nu,\alpha}>0$
and $Tab(\nu,\alpha)$ has a \textit{canonical} filling representative of the shape $\nu$ with weight $\alpha$. It  is constructed  by filling horizontal strips greedily, from the bottom  to the top of $\nu$, starting with the longest columns, while rows are filled from right to left.
\end{lemma}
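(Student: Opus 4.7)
The plan is to exhibit the canonical filling directly and verify that it is a valid SSYT of shape $\nu$ and weight $\alpha$ by induction on $\ell(\alpha)$. The existence of some SSYT is classical: since $K_{\nu,\alpha}=K_{\nu,\alpha^+}$ and $K_{\nu,\alpha^+}>0 \Leftrightarrow \alpha^+\preceq \nu$, positivity of $K_{\nu,\alpha}$ is already known. What remains is to describe an explicit filling adapted to the \emph{ordered} composition $\alpha$, since this specific filling will be needed later (in Section \ref{sec:critic}) for its descent-set properties.

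I would formalize the algorithm as follows. Process letters in decreasing order $i=\ell(\alpha),\ell(\alpha)-1,\ldots,1$. At step $i$, suppose the larger letters already occupy a skew region $\nu/\mu^{(i)}$ at the bottom of $\nu$, where $\mu^{(i)}$ is a partition of $\alpha_1+\cdots+\alpha_i$ (initially $\mu^{(\ell(\alpha))}=\nu$). Insert the $\alpha_i$ copies of letter $i$ as a horizontal strip consisting of the $\alpha_i$ bottom cells of $\mu^{(i)}$ chosen greedily: begin with the bottom cell of a longest column of $\mu^{(i)}$, breaking ties by taking the rightmost such column first (this is the meaning of ``rows filled from right to left''). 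Let $\mu^{(i-1)}$ be the resulting shape after removal.

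Then the verification splits into three items. (a) $\mu^{(i-1)}$ is again a partition: removing a bottom box from a longest column preserves the partition shape, and the right-to-left tiebreak handles equal-length columns correctly by always removing the rightmost of them first. (b) The final filling is a SSYT of content $\alpha$: rows are weakly increasing because letters placed later are smaller, and columns are strictly increasing because each letter forms a horizontal strip that sits strictly above the entries already placed. (c) The greedy strip actually fits at every step, i.e., the number of distinct columns one may simultaneously shorten at the bottom of $\mu^{(i)}$ is at least $\alpha_i$. This is the place where the dominance hypothesis is used: one shows inductively that $(\alpha_1,\ldots,\alpha_{i-1})^+\preceq \mu^{(i-1)}$, which reduces step $i-1$ to the same situation and, when combined with the base case $\ell(\alpha)=1$ (where $\nu=(|\alpha|)$ and the filling is the single row), closes the induction.

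The main obstacle is item (c): propagating the dominance condition down the recursion $\mu^{(i-1)}=\mu^{(i)}\setminus(\text{horizontal strip of size }\alpha_i)$. The natural argument is to pass to conjugates and observe that greedily shortening the longest columns first is exactly the operation that preserves the largest possible partial sums of $\mu^{(i)}{}'$, so that the remaining weight $(\alpha_1,\ldots,\alpha_{i-1})^+$, which is $\alpha^+$ with a removed part of size $\alpha_i$, is still dominated by $\mu^{(i-1)}$. Once this invariant is in place, the canonical filling is well defined, unique given the tiebreak rule, and automatically records the letters of $\alpha$ in the prescribed multiplicities, which is the desired conclusion.
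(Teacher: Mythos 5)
Your proposal is correct, and the construction you describe does produce exactly the canonical filling of the lemma (I checked it against the paper's displayed examples), but your inductive step is genuinely different from the paper's. Both arguments induct on $\ell(\alpha)$ and insert the largest letter $s$ as a greedy bottom horizontal strip; the difference is in how the case of an arbitrary last part $\alpha_s$ is handled. The paper only runs the greedy step directly when $\alpha_s$ happens to be the smallest part of $\alpha^+$; otherwise it permutes the content so that the smallest part sits last, builds that filling, and then refills the two bottom strips with swapped multiplicities before recursing. You instead run the greedy step uniformly and carry the invariant $(\alpha_1,\dots,\alpha_{i-1})^+\preceq\mu^{(i-1)}$ through for an arbitrary removed part. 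This buys you a cleaner, case-free induction, at the price of needing a slightly stronger propagation lemma: that greedily removing the bottom cells of the $\alpha_s$ longest columns preserves dominance of the leftover content even when $\alpha_s$ is not minimal. That lemma is true, but your conjugate-side justification is the crux and is currently asserted rather than proved; the quickest complete argument is to note that if $\widehat\nu$ denotes the shape after removal then $\sum_{q\le i}\widehat\nu_q=\min\bigl(\sum_{q\le i}\nu_q,\ \sum_{q\le i+1}\nu_q-\alpha_s\bigr)$ for every $i$, while if $\alpha_s$ occupies position $k$ in $\alpha^+=(a_1\ge\cdots\ge a_s)$ then $\sum_{q\le i}(\alpha_1,\dots,\alpha_{s-1})^+_q$ equals $\sum_{q\le i}a_q$ for $i<k$ and $\sum_{q\le i+1}a_q-\alpha_s$ for $i\ge k$; comparing term by term, using $a_{i+1}\ge \alpha_s$ for $i<k$ and $a_{i+1}\le\alpha_s$ for $i\ge k$, gives the required inequalities. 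With that computation inserted, your items (a), (b), (c) all close and the proof is complete; I would only add that the paper's own proof is similarly terse at the corresponding dominance claim, so you are not below its standard of rigor, but since you dispense with the paper's swapping device you should make this one step explicit.
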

\begin{proof}
Assume that $\alpha^+\preceq\nu$.
Then $s:=\ell(\alpha)\geq \ell(\nu)=m$, and the shape $\nu=(\nu_1,\dots,\nu_m)$ has $m$ rows and $\nu_1$ columns. We will show  by induction on $s$ that we can construct a SSYT $T$ of shape $\nu$ and weight $\alpha$ by filling horizontal strips greedily, from bottom  to the top of $\nu$, starting with the longest columns, while rows are filled from right to left. The case $s=1$ is trivial. So, assume $s\ge 2$. If $\alpha=(\alpha_1,\ldots,\alpha_{s-1},\alpha_s^+)$, then fill in, as above, $\alpha_s^+$ entries of the shape $\nu$ with letters $s$. The remaining shape $\widehat{\nu}$ satisfy $(\alpha_1^+,\ldots,\alpha_{s-1}^+)\preceq\widehat{\nu}$ and, by the inductive step, there is a filling as above of the shape $\widehat{\nu}$ with content $(\alpha_1,\ldots,\alpha_{s-1})$. Therefore, there is also a filling of the shape $\nu$ as above, with content $(\alpha_1,\ldots,\alpha_{s-1},\alpha_s^+)$.

Consider now the case $\alpha=(\alpha_1,\ldots,\alpha_s^+,\ldots,\alpha_{s-1},\alpha_i^+)$, where the entry $\alpha_s^+$ is in position $1\leq k<s$. Let $(s;\,s-1)$ and $(k;\,s-1)$ be transpositions of  the symmetric group $S_s$. Write $\widetilde{\alpha}=(s;\,s-1)(k;\,s-1)\alpha=(\alpha_1,\ldots,\alpha_{s-1},\ldots,\alpha_i^+,\alpha_s^+)$. From the previous case, there is a filling of the shape $\nu$ with content $\widetilde{\alpha}$. Consider now the two bottom row strips filled with $\alpha_i^+$ letters $s-1$, and $\alpha_s^+$ letters $s$. We refill these strips first with $\alpha_i^+$ letters $s$, and then with $\alpha_s^+$ letters $s-1$, to obtain a filling of the shape $\nu$ with content $(\alpha_1,\ldots,\alpha_{s-1},\ldots,\alpha_s^+,\alpha_i^+)$. Subtracting the strip filled with $\alpha_i^+$, we get a shape $\widetilde{\nu}$ filled with content $(\alpha_1,\ldots,\alpha_{s-1},\ldots,\alpha_s^+)$. By the inductive hypotheses, it can also be filled in the way described above with content $(\alpha_1,\ldots,\alpha_{s}^+,\ldots,\alpha_{s-1})$. Rejoining the strip $\alpha_i^+$ we get the desired filling.
\end{proof}

\begin{example} Below are examples of SSYT's of partition shape  with  canonical filling:
$$\young(12234,35566,4,5,6)\quad \young(1112222,2334455,445556,55,66)\quad\young(111222333556,3455566,556667,66,77)\quad \young(111222,223344,34555,45,5)$$
\end{example}

The previous lemma gives a constructive proof  of the {\em only if part} of  $(c)$ in the next proposition.
\begin{proposition}\label{kostka}{\em \cite{ful,sa,stanley}}
Let $\alpha$ be  a composition and $\nu$ a partition of $|\alpha|$. Then

$(a)$ $K_{\nu,\alpha^+}=K_{\nu,\alpha}$,

$(b)$ $K_{\alpha^+,\alpha}=1$,

$(c)$ $\alpha^+\preceq\nu\text{ if and only if }K_{\nu,\alpha}>0.$
\end{proposition}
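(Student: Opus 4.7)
I would handle the three parts in the order $(a) \Rightarrow (b) \Rightarrow (c)$, since each reduces the next to a transparent counting argument.

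For part $(a)$, the plan is to invoke the Bender--Knuth involution. For each adjacent transposition $s_i$ acting on compositions (swapping $\alpha_i$ and $\alpha_{i+1}$), I would describe the involution $\varphi_i : Tab(\nu,\alpha) \to Tab(\nu, s_i \alpha)$ that operates row-by-row on the entries labelled $i$ and $i+1$: in each row, the positions where an $i$ sits directly above an $i+1$ in the column are ``frozen'', and among the remaining (``free'') entries in that row, which form a block of $i$'s followed by a block of $(i+1)$'s, one simply swaps the two block-lengths. A direct check shows that the result is semistandard and that $\varphi_i$ is an involution, so $K_{\nu,\alpha}=K_{\nu,s_i\alpha}$. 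Since adjacent transpositions generate the symmetric group, iterating yields $K_{\nu,\alpha}=K_{\nu,\alpha^+}$. The only subtle point, and the main bookkeeping hurdle, is verifying that the swap within the free block preserves both the column-strict and row-weak conditions across row boundaries; this comes down to checking what happens at the boundary entries of each frozen block.

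For part $(b)$, by $(a)$ it suffices to prove $K_{\lambda,\lambda}=1$ with $\lambda := \alpha^+$. Let $T \in Tab(\lambda,\lambda)$. Column-strictness forces every entry in row $i$ to be at least $i$; equivalently, every entry $\le i$ lies in the first $i$ rows. The total number of entries $\le i$ in $T$ equals $\lambda_1+\cdots+\lambda_i$, which is exactly the number of boxes in the first $i$ rows of $\lambda$. Therefore, for every $i$, the first $i$ rows contain only entries $\le i$. Applied inductively, this forces row $i$ to be filled entirely with $i$'s, giving the unique canonical tableau.

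For part $(c)$, Lemma \ref{kostkaproof} already supplies the ``only if'' direction via the explicit canonical filling. For the ``if'' direction, I would argue as follows. Suppose $K_{\nu,\alpha}>0$, and by $(a)$ replace $\alpha$ by $\alpha^+$, so I may assume $\alpha$ is a partition. Let $T \in Tab(\nu,\alpha)$. As in $(b)$, column-strictness implies that all entries $\le i$ of $T$ lie in the first $i$ rows of $\nu$. Counting both sides,
\begin{equation*}
\alpha_1+\cdots+\alpha_i \;=\; \#\{\text{entries of }T \le i\} \;\le\; \nu_1+\cdots+\nu_i,
\end{equation*}
for every $i$, which is precisely the condition $\alpha=\alpha^+\preceq\nu$. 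This completes $(c)$ and hence the proposition. The delicate step across the whole argument is really the Bender--Knuth swap in $(a)$; once it is in place, $(b)$ and $(c)$ follow from the elementary ``entries $\le i$ must fit in the first $i$ rows'' observation.
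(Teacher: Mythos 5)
Your proof is correct. Note that the paper does not actually prove this proposition---it is quoted from the standard references \cite{ful,sa,stanley}, with only the \emph{only if} direction of $(c)$ supplied constructively by the preceding Lemma \ref{kostkaproof}---so there is no internal argument to compare against. Your route is the standard textbook one: the Bender--Knuth involution for $(a)$, and the observation that column-strictness confines every entry $\le i$ to the first $i$ rows, which yields both $(b)$ (by the exact count $\lambda_1+\cdots+\lambda_i$) and the \emph{if} direction of $(c)$ (by the inequality $\alpha_1+\cdots+\alpha_i\le\nu_1+\cdots+\nu_i$ after reducing to $\alpha=\alpha^+$ via $(a)$); you correctly reuse the paper's canonical filling for the remaining direction. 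All steps check out.
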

For instance, in \eqref{tab}, $\alpha^+=(4,2,2,1)\preceq \lambda$.

\subsection{Skew-Schur functions, LR tableaux and Littlewood-Richardson rule } \label{subsec:yama}
Let $\Lambda$ denote the ring of symmetric functions in the variables $x = (x_1,x_2,\ldots)$
over $\mathbb{Q}$, say. The Schur functions $s_{\lambda}$ form an orthonormal basis for $\Lambda$, with respect to the Hall inner product, and may be defined in terms
of SSYT by
\begin{equation}\label{schur}
s_{\lambda}=\sum_{T}x^T=\sum_{T}x_1^{t_1}x_2^{t_2}x_3^{t_3}\cdots\in\Lambda,\end{equation}
where the sum is over all SSYT of shape $\lambda$ and  $t_i\ge 0$ is the number of occurrences of $i$ in $T$ \cite{stanley}.
The notion of Schur functions can be generalized to apply to {\em skew shapes}   $\lambda/\mu$.
Replacing $\lambda$ by $\lambda/\mu$ in \eqref{schur} gives the definition
of the skew Schur function $s_{\lambda/\mu}\in\Lambda$ as a sum of monomial weights over all SSYTs of skew shape
$\lambda/\mu$.
 We identify $s_{\lambda/\mu}$ with the skew Schur function indexed by the  skew Young diagram in the { basic form}.

The {\it reading word} $w$ of a SSYT $T$ is the word obtained by reading the entries of $T$ from right to left and top to bottom.
If, for all positive integers $i$ and $j$, the first
$j$ letters of $w$ includes at least as many $i'$s as $(i + 1)'$s, then we say that $w$
is a {\it Yamanouchi word}. Clearly, the content of a Yamanouchi word is a partition.  Yamanouchi words of content $\nu$ are in bijection with  standard Young tableaux of shape  $\nu$ \cite[Section 5.3]{ful}. Each SYT $U$ of shape $\nu$ specifies a Yamanouch word $w_U=w_1\cdots w_{|\nu|}$ of content $\nu$, in the alphabet $[\ell(\nu)]$,   where the number $u\in[|\nu|]$ is in
the $w_u$th row of  the SYT, and this map is  one-to-one. Moreover, one has $w_j\ge w_{j+1}$ unless $j\in \mathcal{D}(U)$ in which case
$w_j<w_{j+1}$.  In \eqref{tab}, for example, \begin{equation}\label{yam}w_{\widehat T}=11\,2211\,32\,3\quad \text{ and} \quad w_{\widehat V}=11\,2211\,332\end{equation} are  Yamanouchi words of content $\nu=(4,3, 2)$, where $\mathcal{D}(\widehat T)=\{2,6,8\}$ and  $\mathcal{D}(\widehat V)=\{2,6\}$.

A  Littlewood--Richardson (LR) tableau  \cite{lr} is a SSYT  whose reading word is Yamanouchi. We denote by $\mathcal{LR}(\lambda/\mu,\nu)$ the set of all LR tableaux of shape $\lambda/\mu$ and content $\nu$. When $\mu$ is empty, $\lambda=\nu$ and the LR tableau of shape $\nu$ and content $\nu$, denoted $Y(\nu)$, is  called the Yamanouchi tableau of shape $\nu$. In fact, $Y(\nu)$ is the unique SSYT of shape and content $\nu$, precisely, the SSYT that is filled with $i$'s in row $i$.
The structure constants $c_{\lambda/\mu}^{\nu}$ in the expansion \eqref{exp} of the skew Schur function $s_{\lambda/\mu}$, in the basis of Schur functions,  are given by the  {\it Littlewood--Richardson
rule} which states that
 the {\it Littlewood--Richardson coefficient} $c_{\lambda/\mu}^{\nu}=\#\mathcal{LR}(\lambda/\mu,\nu)$,   the number of LR tableaux with skew shape $\lambda/\mu$ and content $\nu$ \cite{lr,stanley}.

\subsection{LR tableaux and companion tableaux} \label{subsec:comp} LR tableaux in $\mathcal{LR}(\lambda/\mu,\nu)$ can be replaced by their {\em  companion tableaux}
which are certain SSYTs in $ Tab(\nu,\lambda/\mu)$ whose standardizations encode the Yamanouchi reading words of the LR tableaux in $\mathcal{LR}(\lambda/\mu,\nu)$. Given $G\in Tab(\nu,\lambda/\mu)$,
the containment of the descent set of $\widehat G$ in
$\mathcal{S}(\lambda/\mu)$ guarantees that the filling of $\lambda/\mu$ with Yamanouchi reading word $w_{\widehat G}$ satisfies the row semistandard condition.
Thus any tableau $G\in Tab(\nu,\lambda/\mu)$   specifies through $\widehat G$  a filling of the skew shape  $\lambda/\mu$ with the Yamanouchi reading word $w_{\widehat G}$ of content $\nu$  with the row semistandard condition satisfied but not necessarily the standard condition of the column filling. In addition, by Proposition \ref{prop:desc}, we know that,
a filling of the skew shape $\lambda/\mu$ with a Yamanouchi reading word satisfying the row semistandard condition is encoded by a SYT of shape $\nu$ with descent set in $\mathcal{S}(\lambda/\mu)$. For example, the two Yamanouchi words in \eqref{yam} give  fillings for the skew shape $\lambda/\mu=(2,4,2,1)$ where all satisfy the row semistandard condition. The word $w_{\widehat V}$  does not garantee  the column standard condition in the filling
$$w_{\widehat T}=112211323,\;\young(::::11,:1122,23,3);\; w_{\widehat V}=112211332,\; \young(::::11,:1122,33,2),\; \young(:::::11,::1122,:33,2).$$


Given $H\in \mathcal{LR}(\lambda/\mu,\nu)$ the {\em companion tableau} $G$ of $H$ is the  SSYT in $ Tab(\nu,\lambda/\mu)$  whose  $\nu_i$ entries of each row $i$ of $G$  are the numbers of the rows   of $H$ where the $\nu_i$  $i$'s are filled in. This defines  a bijection between $\mathcal{LR}(\lambda/\mu,\nu)$ and a subset $\rm LR_{\nu,\lambda/\mu}$, of $ Tab(\nu,\lambda/\mu)$ that sends $H\in \mathcal{LR}(\lambda/\mu,\nu)$ to $G\in \rm LR_{\nu,\lambda/\mu}$.  Therefore, the LR coefficient in \eqref{exp} also satisfies
\begin{equation}c_{\lambda/\mu}^{\nu}=\#\mathcal{LR}(\lambda/\mu,\nu)=\#\rm LR_{\nu,\lambda/\mu}.\label{lr}\end{equation}

The set $\rm LR_{\nu,\lambda/\mu}$ may be characterized in several ways: by linear inequalities as in \cite{GZpolyed};  or observing that $ Tab(\nu,\lambda/\mu)$ is a subset of the $gl_n$-crystal $B(\nu)$ consisting of all SSYTs of shape $\nu$ in the alphabet $[n]:=\{1,\dots,n\}$, $n\ge \ell(\lambda)$, \cite{kwon,bumschi}. The highest weight element of $B(\nu)$ is $Y(\nu)$ and  $\rm LR_{\nu,\lambda/\mu}$ consists of the vertices $G$ in $B(\nu)$ such that  $Y(\mu)\otimes G$ is a highest weight element of weight $\lambda$ of $B(\mu)\otimes B(\nu)$ \cite[Section 4.3]{kwon}.

Given  $G\in Tab(\nu,\alpha)$, for each $1\leq i\leq \ell(\nu)$, and $j\geq i$, let $\chi_j^i$ denote
the multiplicity of letter $j$ in row $i$ of $G$. Note that, for $j=1,\ldots,\ell(\alpha)$, $\chi_j^i=0$, whenever $1\le j<i$.
 Fix $\mu\subseteq \lambda$ so that $\alpha=\lambda/\mu$. One  then has the bijection,
\begin{equation}\label{comp}
\phi_{\lambda/\mu}:\rm LR_{\nu,\lambda/\mu}=\{\text {$G\in Tab(\nu,\alpha)$:  $Y(\mu)\otimes G\approx_{gl_n} Y(\lambda)$ }\}\longrightarrow \mathcal{LR}(\lambda/\mu,\nu),\;G\mapsto\phi_{\lambda/\mu}(G),
\end{equation}
such that $\phi_{\lambda/\mu}(G)$ is the $\nu$-weight semistandard filling of $\lambda/\mu$  by putting $\chi_j^i$ letters  $i$, starting from the left, in  row $j$ of the skew-shape $\lambda/\mu$, 
for $i=1,\ldots,\ell(\nu)$, and $j=1,\dots,\ell(\alpha)$. The reading word of $\phi_{\lambda/\mu}(G)$ is precisely the Yamanouchi word  of weight $\nu$,  $w_{\widehat G}=w_1\cdots
w_{\alpha_1}w_{\alpha_1+1}$ $\cdots w_{\alpha_1+\alpha_2}w_{\alpha_1+\alpha_2+1}\cdots w_{\alpha_1+\cdots+\alpha_{\ell(\alpha)-1}}\cdots w_{|\alpha|}$.
That is, $\rm LR_{\nu,\lambda/\mu}$ consists of those tableaux in $Tab(\nu,\alpha)$  assigning to the skew shape  $\lambda/\mu$  a semistandard filling of content $\nu$ whose reading word is the Yamanouchi $w_{\widehat G}$ (hence an LR filling).
 Theorem \ref{th:gessel} characterises $\rm LR_{\nu,R_\alpha}$ in the case of connected ribbons $R_\alpha$.

\subsection{ Schur support and symmetries}
The definition \eqref{interval} of Schur support of the skew shape  $\lambda/\mu$  can be rephrased as follows: $\nu\in[\lambda/\mu]$ if and only if $\mathcal{LR}(\lambda/\mu,\nu)\neq\emptyset$, equivalently, ${\rm LR}_{\nu,\lambda/\mu}\neq\emptyset$.

 LR coefficients satisfy a number of symmetries \cite{stanley,acm,akt16}, including:
$c_{\lambda/\mu}^{\nu}=c_{\lambda/\nu}^{\mu}$, $c_{\lambda/\mu}^{\nu}=c_{{(\lambda/\mu)}^{\circ}}^{\nu}$  where $(\lambda/\mu)^\circ$ is the $\pi$-rotation of $\lambda/\mu$, and $c_{\lambda/\mu}^{\nu}=c_{\lambda'/\mu'}^{\nu'}$.
As  a consequence  $[\lambda/\mu]=[(\lambda/\mu)^\circ]$ and $[(\lambda/\mu)']=[\lambda/\mu]'$ where
$$ s_{\lambda/\mu}=s_{(\lambda/\mu)^{\circ}}\;\;\text{and}\;\; s_{\lambda'/\mu'}=\sum_{\nu\in [\rm r(\lambda/\mu),\rm c(\lambda/\mu)']} c_{\lambda/\mu }^{\nu} s_{\nu'}.$$
The full support of one of the shapes  $\lambda/\mu$, $(\lambda/\mu)'$ or $(\lambda/\mu)^\circ $ implies the full support of any of the others.
When $\lambda/\mu$ is not connected, and consists of two connected components $A$ and $B$,  and may themselves be either Young diagrams or skew Young diagrams, then the  combinatorial definition of (skew) Schur function \eqref{schur} gives \cite{stanley}
$s_{\lambda/\mu}=s_{A}s_{B}=s_Bs_A.$ This means that a skew Schur function is invariant under permutation and rotation of the connected components.



\section{Ribbons} \label{sec:ribbon}
A {\em ribbon} is a skew shape which does not contain a $2\times 2$ block as a subdiagram and it is connected when each pair of consecutive rows intersects in exactly one column. Thus, any composition $\alpha=(\alpha_1,\dots,\alpha_{\ell(\alpha)})$ determines a unique connected ribbon consisting of $\ell(\alpha)$ rows (or {\em parts}) $<\alpha_i>$ of length $\alpha_i$, for $i=1,\ldots,\ell(\alpha)$,
from top to bottom.

Given the composition $\alpha$, $R_\alpha$ will denote a ribbon (not necessarily connected) where row lengths
from top to bottom are given by the parts of $\alpha$ and  adjacent rows  overlap in at most  one column.
If each row is at least two boxes in length  then the column length is at most two otherwise the column length might be bigger than two.
If $\beta$ is another composition, the direct sum $R_\alpha\oplus R_\beta$ of the ribbons $R_\alpha$ and $R_\beta$,  is the ribbon $R_{\alpha\cdot\beta}$ where the ribbons $R_\alpha$ and $R_\beta$ have no edge in common. In general,
 $R_\alpha$ is a direct sum of connected ribbons unless otherwise stated.

.
\subsection{Overlapping partition of a  ribbon with parts at least two}
\label{subsec:overlap}
 In this subsection, we only consider compositions $\alpha$ with parts $\ge 2$, and therefore the ribbon $R_\alpha$  has columns of length at most two.
\begin{definition} \label{overlap} Let $\alpha$ be an arbitrary composition with parts $\ge 2$.
 The \textit{ overlapping partition} of $R_\alpha$ is the partition $p=(p_1, p_2,\ldots, p_{\ell(\alpha)-1}, 0)$, $\ell(p)\le\ell(\alpha)-1$, such that
 $p_i$ is
 the number of columns of length two among the smallest $\ell(\alpha)-i+1$ rows of $R_\alpha$ in lowest position, for $ i=1,\dots, \ell(\alpha)$. When $\alpha$ is a partition, $p_i$ is the number of columns of length two in the last $\ell(\alpha)-i+1$ rows of $R_\alpha$ for $ i=1,\dots, \ell(\alpha)$.
\end{definition}

Observe that $\displaystyle\sum_{j=i}^{\ell(\alpha)}\alpha_j^+-p_i$ is the number of columns of $\displaystyle R_\alpha\setminus(\bigcup_{j=1}^{i-1}<\alpha^+_j>)$, for  $1\le i\le \ell(\alpha)$. In particular,
 $|\alpha|-p_1$ is the number of columns of $R_\alpha$ and thus the Schur interval of  a ribbon $R_{\alpha}$ with overlapping partition $p$ is $[\alpha^+,(|\alpha|-p_1,p_1)]$.  When $\alpha$ is a partition, one obtains \eqref{partsupp} as  a special case of this interval.

\begin{proposition} Let $\alpha$ be a composition with  parts $\ge 2$. For $1\le i\le \ell(\alpha)$, let $k_i\in \{1,\dots,\ell(\alpha)\}$ be the number of connected components (ribbons) of $\displaystyle R_\alpha\setminus(\displaystyle\bigcup_{j=1}^{i-1}<\alpha^+_j>)$. Then $p_i=\ell(\alpha)-(i-1)-k_i$, for $i=1,\dots,\ell(\alpha)-1$, with $0\le p_{\ell(\alpha)-1}\le 1$, and
\begin{equation}\label{overlap}p\subseteq (\ell(\alpha)-1,\dots,2,1,0)\subseteq (|\alpha|-\alpha^+_1=\sum_{j=2}^{\ell(\alpha)}\alpha_j^+,\dots,|\alpha|-\sum_{j=1}^{\ell(\alpha)-1}
\alpha^+_i=\alpha^+_\ell(\alpha),0),\end{equation}
where the set of distinct entries of $p$ is contained in  $\{\ell(\alpha)-1,\ell(\alpha)-2,\dots,2,1,0\}$.
\end{proposition}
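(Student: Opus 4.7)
The plan is to reduce the entire proposition to one combinatorial identity: \emph{in any ribbon $R$ with rows of length $\ge 2$, if $R$ has $m$ rows arranged in $k$ connected components, then $R$ has exactly $m-k$ columns of length two.} This holds because in a connected ribbon each pair of consecutive rows overlaps in exactly one column, and that column is the unique length-two column they jointly determine (the hypothesis of row length $\ge 2$ together with the no $2\times 2$ block condition forces every column to have length $\le 2$). A connected component with $r$ rows therefore contributes $r-1$ length-two columns, and summing over the $k$ components gives $\sum(r_j-1)=m-k$.

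With this identity in hand, the main formula for $p_i$ is immediate. The sub-diagram $R_\alpha \setminus \bigcup_{j=1}^{i-1}\langle\alpha^+_j\rangle$, obtained from $R_\alpha$ by deleting the $i-1$ longest rows while leaving the remaining rows in their original positions, is itself a ribbon with $\ell(\alpha)-(i-1)$ rows (each of length $\ge 2$) and, by hypothesis, $k_i$ connected components; its length-two columns are, by Definition \ref{overlap}, precisely those counted by $p_i$. The identity therefore yields $p_i=\ell(\alpha)-(i-1)-k_i$. Setting $i=\ell(\alpha)-1$ leaves only two rows, so $k_{\ell(\alpha)-1}\in\{1,2\}$ and hence $p_{\ell(\alpha)-1}\in\{0,1\}$.

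For the inner containment $p\subseteq(\ell(\alpha)-1,\dots,1,0)$, I would observe that $k_i\ge 1$ gives $p_i\le\ell(\alpha)-i$ at once. For the outer containment I need $\ell(\alpha)-i\le\sum_{j=i+1}^{\ell(\alpha)}\alpha^+_j$ for each $i\in\{1,\dots,\ell(\alpha)-1\}$; this is automatic, since the right side is a sum of $\ell(\alpha)-i$ integers each at least $2$, hence at least $2(\ell(\alpha)-i)$. The claim that the distinct entries of $p$ lie in $\{0,1,\dots,\ell(\alpha)-1\}$ is then a direct consequence of the inner containment together with the fact that $p$ is a partition.

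The only mildly delicate point is verifying that $R_\alpha\setminus\bigcup_{j=1}^{i-1}\langle\alpha^+_j\rangle$ is genuinely a ribbon satisfying the hypotheses of the identity: deleting rows never creates a new length-two column, nor a new overlap between previously non-adjacent rows (removing boxes can only reduce overlaps), so the length-two columns of the sub-ribbon are precisely a subset of those of $R_\alpha$, counted correctly by the identity. Once this is spelled out, no further obstacle remains and the proposition reduces to a one-line count supplemented by the two elementary inequalities above.
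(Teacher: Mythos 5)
Your proposal is correct and follows essentially the same route as the paper: the paper's proof amounts to observing that a ribbon with $m$ rows (each of length $\ge 2$) and $k$ connected components has exactly $m-k$ columns of length two, applying this to $R_\alpha$ and, by induction, to each sub-ribbon $R_\alpha\setminus\bigcup_{j=1}^{i-1}\langle\alpha^+_j\rangle$, and then reading off the containments from $k_i\ge 1$ and the bound $\ell(\alpha)-i\le\sum_{j=i+1}^{\ell(\alpha)}\alpha^+_j$. Your write-up merely makes explicit the counting identity (summing $r-1$ over components) that the paper asserts without proof, so there is nothing to add.
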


\begin{proof}
Observe that, $p_1=\ell(\alpha)-k_1\in\{0,1,2,\dots,\ell(\alpha)-1\}$ and by induction on $i\ge 1$, $p_i=\ell(\alpha)-(i-1)-k_i$  is the first entry   of the overlapping partition of $\displaystyle R_\alpha\setminus(\displaystyle\bigcup_{j=1}^{i-1}<\alpha^+_j>)$, $1\le i\le \ell(\alpha)$.
 Henceforth  $\displaystyle 0\le p_{i+1}\le p_i\le \ell(\alpha)-i\le \sum_{j=i+1}^s\alpha_j^+$, for $i=1,\ldots, \ell(\alpha)-1$.
\end{proof}

A ribbon $R_\alpha$ is connected if and only if $p_1=\ell(\alpha)-1$,  otherwise $p_1\in\{0,1,2,\dots,\ell(\alpha)-2\}$.
It is an horizontal strip  if $p_1=0$.
When $\alpha=\alpha^+$, a ribbon $R_{\alpha^+}$ (not necessarily connected) is uniquely defined by the partition $\alpha$ and its overlapping partition $p$ and hence  $R_{\alpha}^p$ denotes such ribbon. In fact, more can be said.
 It is shown next that  monotone ribbons with at least  two boxes in each row are in bijection with  pairs of partitions $(\alpha,p)$ where the parts of $p$ are assigned  by  a multiset of $\{\ell(\alpha)-k,\dots,1\}$ of cardinality $\ell(\alpha)-k\le \ell(p)<\ell(\alpha)$ with $k\in\{1,\dots,\ell(\alpha)\}$.  Recall Example \ref{example:first}, $R_{(3)}\oplus R_{(3,2)}\oplus R_{(2,2)}=R^{(2,2,1,1,0)}_{(3,3,2,2,2)}$ is defined by the partition pair $(3,3,2,2,2)$ and  $p=(2,2,1,1,0)$.
\begin{proposition} \label{prop:biject} Let $\alpha $ be a partition with parts $\ge 2$ and  let $ k\in\{1,\dots,\ell(\alpha)\}$.
There is a bijection between ribbons $R_\alpha$ with $k$ connected components and  multisets of $\{\ell(\alpha)-k, \dots,2,1\}$ of cardinality $\ell(\alpha)-k\le \ell(p)\le \ell(\alpha)-1$ assigning the parts of the overlapping partition $p$.
\end{proposition}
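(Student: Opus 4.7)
The plan is to use the formula $p_i = \ell(\alpha) - (i-1) - k_i$ from the preceding proposition to read $p$ as a record of which consecutive pairs of rows in $R_\alpha$ share a column, and then to invert that record.

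First I would observe that in a monotone ribbon with parts $\ge 2$, the top row $\langle\alpha_1\rangle$ can only share a column with the second row $\langle\alpha_2\rangle$, because every row below $\langle\alpha_2\rangle$ has length at most $\alpha_2$ and lies strictly to its left. Consequently, removing $\langle\alpha_1\rangle$ either leaves the number of components unchanged (if $\langle\alpha_1\rangle$ overlapped $\langle\alpha_2\rangle$) or decreases it by $1$ (otherwise). Applied iteratively this gives $k_{i+1} - k_i \in \{-1, 0\}$ for each $i$, hence
\[
p_i - p_{i+1} \;=\; 1 + k_{i+1} - k_i \;\in\; \{0, 1\},
\]
with the value $1$ occurring precisely when rows $i$ and $i+1$ overlap. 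Combined with $p_1 = \ell(\alpha) - k$ and $p_{\ell(\alpha)} = 0$, this forces the weakly decreasing sequence $p_1 \ge \cdots \ge p_{\ell(\alpha)} = 0$ to attain every integer from $0$ to $\ell(\alpha) - k$ at least once. Thus the nonzero parts of $p$ form a multiset of $\{1, \ldots, \ell(\alpha)-k\}$ in which each element appears with multiplicity $\ge 1$, and whose cardinality $\ell(p)$ lies in the required range $[\ell(\alpha) - k,\ \ell(\alpha) - 1]$.

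For surjectivity and the inverse direction, given such a multiset I would sort it weakly decreasingly and pad with zeros to obtain $p = (p_1, \ldots, p_{\ell(\alpha)-1}, 0)$ with $p_1 = \ell(\alpha)-k$ and $p_i - p_{i+1} \in \{0, 1\}$, then build $R_\alpha$ by stacking rows of lengths $\alpha_1, \ldots, \alpha_{\ell(\alpha)}$ top-to-bottom, letting row $i+1$ overlap row $i$ in exactly one column when $p_i - p_{i+1} = 1$ and placing row $i+1$ immediately to the left of row $i$ (sharing no column) when $p_i - p_{i+1} = 0$. Because $\alpha$ is a partition these placements are forced, so the resulting ribbon is unique up to global translation; a direct check using $p_i = \ell(\alpha) - (i-1) - k_i$ then confirms that it has exactly $\ell(\alpha) - p_1 = k$ connected components and overlapping partition $p$, so the two procedures are mutually inverse.

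The one step needing real care is the dichotomy $k_{i+1} - k_i \in \{-1, 0\}$, which uses both the monotonicity of $\alpha$ and the row-length-$\ge 2$ hypothesis to guarantee that only adjacent rows can share a column; once that is in hand the bijection is pure bookkeeping with the explicit formula from the previous proposition.
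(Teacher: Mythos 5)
Your proposal is correct and follows essentially the same route as the paper: both arguments rest on the observation that $p_i-p_{i+1}\in\{0,1\}$ with the value $1$ recording exactly when rows $i$ and $i+1$ overlap, so that $p$ and the set of overlapping adjacent row pairs determine each other. Your derivation of the unit-step property via $p_i=\ell(\alpha)-(i-1)-k_i$ and the dichotomy $k_{i+1}-k_i\in\{-1,0\}$ is a minor repackaging of what the paper states directly, and the reconstruction of the ribbon from the multiset is the same in both.
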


\begin{proof}
Let $R_\alpha$ with $k$ connected components. One has $p_1=\ell(\alpha)-k$, and, for $2\le i\le\ell(\alpha)$,  $p_{i}=p_{i-1}$ if rows $i$ and $i-1$ of $R_\alpha$ do not overlap, and  $p_i=p_{i-1}-1$ otherwise. In particular, $0\le p_{\ell(\alpha)-1}\le 1$. Henceforth the parts of $p$ form a multiset of $\{\ell(\alpha)-k, \ell(\alpha)-k-1,\dots,2,1\}$  of cardinality $\ell(\alpha)-k\le \ell(p)\le \ell(\alpha)-1$.
  Let $R_\alpha$ and $\tilde R_\alpha$ be two distinct ribbons (skew shapes do not coincide) with $k$ connected components and overlapping partitions $p$ and $\tilde  p$ respectively. Let us choose the first $i\in\{2,\dots,\ell(\alpha)\}$ such that  rows $i$ and $i-1$ in  one of them overlap and in the other do not. Then $p_q=\tilde p_q$, for $1\le q\le i-1$, and $p_i=p_{i-1}-1$ and $\tilde p_i= p_{i-1}$ or reciprocally, and thus $p\neq \tilde  p$.

Let us consider  a multiset of $\{\ell(\alpha)-k, \ell(\alpha)-k-1,\dots,2,1\}$  of cardinality $\ell(\alpha)-k\le \ell(p)\le \ell(\alpha)-1$, and $ p=(p_1, p_2,\ldots, p_{\ell(p)}, 0^{\ell(\alpha)-\ell(p)})$  the partition where $\{p_1, p_2,\ldots, p_{\ell(p)}=1\}$ is the given multiset. We have to construct a ribbon $R_\alpha$ with $k$ components and overlapping partition $p$. Put the last $\ell(\alpha)-\ell(p)$ rows of $R_\alpha$ pairwise disconnected and, observing that $p_{\ell(p)}=1$, whenever $p_{i+1}=p_{i}$  rows $i$ and $i+1$ of $R_\alpha$ do not overlap, and  $p_{i+1}=p_{i}-1$ otherwise for $1\le i\le \ell(p)$.
\end{proof}

\begin{remark} Observe that if $\alpha$ is not a partition, in general $\alpha$ and $p$ do not uniquely define a disconnected ribbon with more than two connected components. For instance, below $\alpha=(2).(3,2).(3,2)=(2).(3,2,3).(2)$, and $R_{(2)}\oplus R_{(3,2)}\oplus R_{(3,2)}$, $R_{(2)}\oplus R_{(3,2,3)}\oplus R_{(2)}$ are distinct  ribbons with the same overlapping partition $p=(2,1,0,0,0)$,
$$R_{(2).(3,2).(3,2)}=\young(::::::::\hfill\hfill,:::::\hfill\hfill\hfill,::::\hfil\hfill,:\hfil\hfill\hfill,\hfill\hfill)\;
R_{(2).(3,2,3).(2)}=\young(::::::::\hfill\hfill,:::::\hfill\hfill\hfill,::::\hfill\hfill,::\hfil\hfill\hfill,\hfill\hfill).$$
\end{remark}

\begin{example}\label{partitionribbon}
$(a)$ Ribbons with shape $R_{\alpha}^p$, for $\alpha=(4,4,3,2)=\alpha^+$, $\ell(\alpha)=4$:
\begin{align*}
&R_{\alpha}^{(0,0,0,0)}=\young(:::::::::\hfill\hfill\hfill\hfill,:::::\hfill\hfill\hfill\hfill,::\hfill\hfill\hfill,\hfill\hfill)&
R_{\alpha}^{(3,2,1,0)}=\young(::::::\hfill\hfill\hfill\hfill,:::\hfill\hfill\hfill\hfill,:\hfill\hfill\hfill,\hfill\hfill)\\
&R_{\alpha}^{(2,1,0,0)}=\young(:::::::\hfill\hfill\hfill\hfill,::::\hfill\hfill\hfill\hfill,::\hfill\hfill\hfill,\hfill\hfill)&
R_{\alpha}^{(2,2,1,0)}=\young(:::::::\hfill\hfill\hfill\hfill,:::\hfill\hfill\hfill\hfill,:\hfill\hfill\hfill,\hfill\hfill)\\
&R_{\alpha}^{(2,1,1,0)}=\young(:::::::\hfill\hfill\hfill\hfill,::::\hfill\hfill\hfill\hfill,:\hfill\hfill\hfill,\hfill\hfill)
\end{align*}
 $(b)$ $R_\alpha= R_{(2,3,2,3)}$  with  $\ell(\alpha)=4$ and $p=(3,1,0,0)\subseteq (3,2,1,0)$. The sequence of ribbons $R_\alpha\setminus(\cup_{j=1}^{i-1}<\alpha^+_j>)$, $1\le i\le \ell(\alpha)$, is depicted below

\young(:::::\hfill\hfill,:::\hfill\hfill\hfill,::\hfill\hfill,\hfill\hfill\hfill),\;\;
\young(::::\hfill\hfill,::\hfil\hfill,\hfill\hfill\hfill),\;\;
\young(:::\hfill\hfill,:\hfill\hfill),\;\;
\young(:\hfill\hfill).

\end{example}


\subsection{ LR ribbons and companion tableaux}
\label{subsec:ribcomp} Let $\alpha$ be an arbitrary composition. As we have seen in \eqref{comp}, if one picks $G\in Tab(\nu,\alpha)$ to be the companion tableau of some LR ribbon in $\mathcal{LR}(R_\alpha,\nu)$ the Yamanouchi word $w_{\widehat G}$ has to guarantee in the filling of $R_\alpha$  the standard condition  in the columns. The overlapping of two consecutive rows reduces to at most  one column.
Thus for ribbon shapes $R_\alpha$ one has just to avoid the violation of the standard condition on the   overlapping row pairs which  just occurs in one column. In other words, whenever, in $R_\alpha$, rows $\alpha_k$ and $\alpha_{k+1}$ overlap then in the reading word $w_{\widehat G}$ the subword $w_{\alpha_1+\cdots+\alpha_{k}}w_{\alpha_1+\cdots+\alpha_k+1}$ is strictly increasing which means $\alpha_1+\cdots+\alpha_{k}$ is a descent of $\widehat G$.
In the case of connected ribbons $R_\alpha$, this is exactly the content of
 Theorem \ref{th:gessel}: to
 avoiding the violation of the semistandard condition  on the  overlapping row pairs it    requires  the descent set of the standardization of the companion tableau  to be equal to $\mathcal{S}(\alpha)$.
To figure out what are the conditions to be imposed on the entries of a SSYT to be the  companion of an LR ribbon, we
take into account  the bijection \eqref{comp}, whose domain we now extend  to the set $Tab(\nu,\alpha)$. Thanks to Proposition \ref{prop:desc} we may define the bijection
\begin{definition}
 Let $\nu$ be a partition and $\alpha$  an arbitrary  composition of $|\nu|$. Given  $T\in Tab(\nu,\alpha)$, for each $1\leq i\leq \ell(\alpha)$, and $j\geq i$, let $\chi_j^i$ denote
the multiplicity of letter $j$ in row $i$ of $T$.
Given a ribbon $R_\alpha$, define the map $$\varphi_{R_\alpha}: Tab(\nu,\alpha)\longrightarrow \{\text{$\nu$-Yamanouchi fillings of } R_{\alpha}\text{ with row semistandard condition} \},$$
 such that $\varphi_{R_\alpha}(T)$ is the filling of $R_{\alpha}$  by putting $\chi_j^i$ letters  $i$ in each row strip $<\alpha_j>$,  starting from the left, for $i=1,\ldots,\ell(\nu)$, and $j=1,\dots,\ell(\alpha)$, that is, the reading word of $\varphi_{R_\alpha}(T)$ is $ w_{\widehat T}$.
\end{definition}

\begin{remark}\label{l1}   When $R_\alpha$ is an horizontal strip, the map  $\varphi_{ R_\alpha}: Tab(\nu,\alpha)\longrightarrow \mathcal{LR}(R_{\alpha},\nu)$ is a bijection and $c^\nu_{R_\alpha}=K_{\nu,\alpha}$.
\end{remark}


\begin{example} \label{varphi}  Let $\nu=(6,4,2)$ and  $\alpha=(4,2,2,2,2)$.

$(a)$ Let $T=\young(111124,2335,45)\in Tab(\nu,\alpha)$
with $\mathcal{D}(\widehat T)=\mathcal{S}(\alpha)$ and
$
\chi_1^1=4, \chi^1_2=1,\chi^1_3=0,\chi^1_4=1, \chi^1_5=0,
 \chi^2_2=1,\chi^2_3=2, \chi^2_4=0,  \chi^2_5=1,
 \chi^3_3=0, \chi^3_4=1,\quad \chi^3_5=1.
$.
Considering  the overlapping sequence  $p=(4,3,2,1,0)$ for $\alpha$, we get the tableau
$$\varphi_{R_\alpha^p}(T)=\young(::::1111,:::12,::22,:13,23).$$
 with  Yamanouchi reading word $w_{\widehat T}$  satisfying both requirements of  semistandard property.
Thus, $\varphi_{R_\alpha^p}(T)\in \mathcal{LR}(R_{\alpha^p},\nu),$ and $T$ is the companion tableau of $\varphi_{R_\alpha^p}(T)$.

$(b)$ Next, one exhibits the violation of the column semistandard condition of $\varphi_{R_\alpha^p}(T)$ in the two possible ways.
 Consider now $Q=\young(111122,3345,45)$ and $ V=\young(111133,2245,45)$ in $Tab(\nu,\alpha)$ where $\mathcal{S}(\alpha)=\{4,6,8,10\}$, $\mathcal{D}(\widehat Q)=\{6,8,10\}=\mathcal{S}(\alpha)\setminus\{4\}$, $w_{\widehat Q}=1111\,11223232$, $w_4=w_5$, and   $\mathcal{D}(\widehat V)=\{4,8,10\}=\mathcal{S}(\alpha)\setminus\{6\}$, $w_{\widehat V}=111122\,113232$, $w_6>w_7$.

  If $p=(4,3,2,1,0)$,  the strict increasing filling along columns of $\varphi_{R_\alpha^p}(Q)$ and $\varphi_{R_\alpha^p}(V)$ fails in the overlapping of the rows
$<\alpha_1>$ and $<\alpha_2>$, and  $<\alpha_2>$ and $<\alpha_3>$, respectively:
$$\varphi_{R_\alpha^p}(Q)=\young(::::1111,:::11,::22,:23,23),\;\varphi_{R_\alpha^p}(V)=\young(::::1111,:::22,::11,:23,23)\not\in \mathcal{LR}(R^{ p}_{\alpha},\nu).$$
$(i)$ In the first case, $w_{\alpha_1}=w_{\alpha_1+1}$, if   we instead consider the overlapping sequence $\tilde p=(3,3,2,1,0)$, then $Q$ becomes the companion tableau of $\varphi_{R_\alpha^{\tilde p}}(Q)\in \mathcal{LR}(R^{\tilde p}_{\alpha},\nu)$.

$(ii)$ In the second case, $w_{\alpha_1+\alpha_2}>w_{\alpha_1+\alpha_2+1}$, we keep $p$  but change $V$ to
$U=\young(111123,2345,45)$, where $\mathcal{D}(\widehat U)=\{4,6,8,10\}$, then $U$ is the companion tableau of $\varphi_{R_\alpha^p}(U)\in\mathcal{LR}(R^{ p}_{\alpha},\nu)$,
$$\varphi_{R_\alpha^{\tilde p}}(Q)=\young(:::::1111,:::11,::22,:23,23),\;\;\varphi_{R_\alpha^p}(U)=\young(::::1111,:::12,::12,:23,23).$$
\end{example}

\begin{example} \label{standard} $(a)$ Let $\alpha=(3,3,2,3,3)$ and $\nu=(4,4,1)$.
 Let
$$Q=\young(1122,3445,5),\; V=\young(1234,5679,8)\in Tab(\nu,\alpha).$$
The descent set   $\mathcal{D}(\widehat Q)=\{\alpha_1+\alpha_2=4,\alpha_1+\alpha_2+\alpha_3+\alpha_4=7\}= S(\alpha)\setminus\{\alpha_1,
\alpha_1+\alpha_2+\alpha_3\}$; and  the descent set   $\mathcal{D}(\widehat V)=\mathcal{S}(\alpha)$.
 The tableaux $Q$ and $V$ are companion tableaux of the following  LR fillings for $R_{(2).(2,1).(2,2)}=R_{(2)}\oplus R_{(2,1)}\oplus R_{(2,2)}$,
$$\varphi_{R_\alpha}(Q)=\young(:::::11,:::11,:::2,:22,23)\quad \varphi_{R_\alpha}(V)=\young(:::::11,:::12,:::2,:13,22).$$
\end{example}

From Proposition \ref{prop:desc} we easily conclude

\begin{proposition}\label{l2new}
 Let $G\in Tab(\nu,\alpha)$ and  $R_\alpha$ a ribbon. Then

$(a)$ $\varphi_{R_\alpha}(G)\in \mathcal{LR}(R_\alpha,\nu)$   if and only  if whenever two consecutive rows $j$ and ${j+1}$ of $R_\alpha$ overlap then $\sum_{k=1}^j\alpha_k$ is  in  the  descent set of $\widehat G$.

$(b)$ if $R_\alpha$ is connected,  $\varphi_{R_\alpha}(G)$ is  an  LR ribbon if and only if  $\mathcal{S}(\alpha) =\mathcal{D}(\widehat G)$.

$(c)$ if $\displaystyle R_\alpha=\oplus_{i=1}^{k}R_{\tilde\alpha_i}$ is  a direct sum of $k$ connected  ribbons, $\varphi_{R_\alpha}(G)$ is  an  LR ribbon if and only if  $\mathcal{S}(\alpha)\setminus \{\sum_{i=1}^r|\tilde\alpha_i|, 1\le r\le k\}\subseteq \mathcal{D}(\widehat G)$.
\end{proposition}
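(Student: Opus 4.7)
The plan is to reduce the LR condition for $\varphi_{R_\alpha}(G)$ to a statement purely about the descent set of $\widehat G$, and then combine with Proposition \ref{prop:desc}. Three things need to be checked for $\varphi_{R_\alpha}(G)\in\mathcal{LR}(R_\alpha,\nu)$: the row-semistandard condition, the column-standard condition, and the Yamanouchi property of the reading word. The first is automatic from the definition of $\varphi_{R_\alpha}$, since the $\chi_j^i$ copies of letter $i$ are placed in row $\langle\alpha_j\rangle$ in increasing order of $i$ from the left. The third is automatic as well, because the reading word of $\varphi_{R_\alpha}(G)$ is exactly the Yamanouchi word $w_{\widehat G}$ associated with the standardization $\widehat G$. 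So the whole proof collapses to understanding the column-standard condition.

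Since $R_\alpha$ is a ribbon, every column has length one or two, and a column of length two arises precisely from an overlap of two consecutive rows $j$ and $j+1$ of $R_\alpha$. At such an overlap the top box is the \emph{leftmost} box of row $j$ and the bottom box is the \emph{rightmost} box of row $j+1$ (this is the defining geometry of a ribbon). Under the map $\varphi_{R_\alpha}$, the leftmost entry of row $j$ is exactly $w_s$ for $s=\alpha_1+\cdots+\alpha_j$, while the rightmost entry of row $j+1$ is $w_{s+1}$. The column-standard requirement is therefore $w_s<w_{s+1}$, and by the description of the Yamanouchi word $w_{\widehat G}$ recalled in Subsection \ref{subsec:yama}, this is equivalent to $s\in\mathcal{D}(\widehat G)$. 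Running over all overlapping pairs of consecutive rows gives statement $(a)$ directly.

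Part $(b)$ is then immediate: when $R_\alpha$ is connected every pair of consecutive rows overlaps, so $(a)$ forces $\mathcal{S}(\alpha)\subseteq\mathcal{D}(\widehat G)$; combining with the containment $\mathcal{D}(\widehat G)\subseteq\mathcal{S}(\alpha)$ guaranteed by Proposition \ref{prop:desc} for any $G\in Tab(\nu,\alpha)$, we obtain $\mathcal{D}(\widehat G)=\mathcal{S}(\alpha)$. For $(c)$, decomposing $R_\alpha=\oplus_{i=1}^k R_{\tilde\alpha_i}$ into connected components, the values $s=\alpha_1+\cdots+\alpha_j$ for which rows $j$ and $j+1$ lie in \emph{different} components are precisely the partial sums $\sum_{i=1}^r|\tilde\alpha_i|$, $1\le r\le k-1$ (the case $r=k$ gives $|\alpha|\notin\mathcal{S}(\alpha)$, so including it is harmless). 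The overlapping positions are thus $\mathcal{S}(\alpha)\setminus\{\sum_{i=1}^r|\tilde\alpha_i|:1\le r\le k\}$, and $(a)$ gives the required containment in $\mathcal{D}(\widehat G)$.

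The only mildly subtle point—the one to state carefully rather than a real obstacle—is the identification of which positions of $w_{\widehat G}$ land at the top and bottom of an overlap column, which hinges on the right-to-left row scan convention used to define the reading word. Once the leftmost-of-top/rightmost-of-bottom pairing is established, everything else is bookkeeping.
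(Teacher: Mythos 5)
Your proof is correct and follows essentially the same route as the paper: the paper also reduces everything to the column-standard condition at the single overlap column of each consecutive row pair, identifies the two boxes there with the letters $w_s$ and $w_{s+1}$ of the reading word at $s=\alpha_1+\cdots+\alpha_j$, and invokes Proposition \ref{prop:desc} for the containment $\mathcal{D}(\widehat G)\subseteq\mathcal{S}(\alpha)$ needed in parts $(b)$ and $(c)$. Your write-up merely makes explicit the leftmost-of-top/rightmost-of-bottom pairing that the paper leaves implicit in its discussion preceding the proposition.
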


\begin{corollary}\label{l3new}
$(a)$  Let $R_\alpha$ be a connected ribbon and $\nu$ a partition  such that $|\nu|=|\alpha|$.
Then
 \begin{enumerate}
\item  $\rm LR_{\nu,R_\alpha}=\{G\in Tab(\nu,\alpha): \text{ $\mathcal{S}(\alpha)=\mathcal{D}(\widehat G)$ }\}$.
 \item $c_{R_\alpha}^\nu=d_{\nu,\alpha}$ the number of standard Young tableaux of shape $\nu$ with descent set $\mathcal{S}(\alpha)$.
 \end{enumerate}

$(b)$ Let $\alpha=\tilde\alpha_1\cdot\cdots\cdot \tilde\alpha_s$ and $\displaystyle R_\alpha=\oplus_{i=1}^{k}R_{\tilde\alpha_i}$  a direct sum of $k$ connected  ribbons $R_{\tilde\alpha_i}$. 
Then
 \begin{enumerate}
\item
$\rm LR_{\nu,R_\alpha}=\{G\in Tab(\nu,\alpha): \text{ $\mathcal{S}(\alpha)\setminus \{\sum_{i=1}^r|\tilde\alpha_i|, 1\le r\le k\} \subseteq  \mathcal{D}(\widehat G)$ }\}$.
 \item $c_{R_\alpha}^\nu$ is the number of standard Young tableaux of shape $\nu$ whose descent set,  a subset of $S(\alpha)$, contains $\mathcal{S}(\alpha)\setminus \{\sum_{i=1}^r|\tilde\alpha_i|, 1\le r\le k\}$.
 \end{enumerate}
\end{corollary}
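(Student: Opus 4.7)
The plan is to derive Corollary \ref{l3new} directly from two previously established results: Proposition \ref{l2new}, which translates the LR condition on $\varphi_{R_\alpha}(G)$ into a condition on $\mathcal{D}(\widehat G)$, and Proposition \ref{prop:desc}, which provides the bijection $T \mapsto \widehat T$ between $Tab(\nu,\alpha)$ and the set of SYTs of shape $\nu$ whose descent set is contained in $\mathcal{S}(\alpha)$. No new combinatorial construction is needed; the work is simply to thread these two correspondences together.

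For part $(a)$, I would first recall from \eqref{comp} and the definition of $\varphi_{R_\alpha}$ that $\rm LR_{\nu,R_\alpha}$ is precisely the preimage under $\varphi_{R_\alpha}$ of $\mathcal{LR}(R_\alpha,\nu)$ inside $Tab(\nu,\alpha)$. Statement $(a)(1)$ is then a direct transcription of Proposition \ref{l2new}$(b)$. For $(a)(2)$, invoke \eqref{lr} to get $c_{R_\alpha}^\nu = \#\rm LR_{\nu,R_\alpha}$. By Proposition \ref{prop:desc}, the map $G \mapsto \widehat G$ is a bijection from $Tab(\nu,\alpha)$ onto the set of SYTs of shape $\nu$ with descent set a subset of $\mathcal{S}(\alpha)$; restricting this bijection to $\rm LR_{\nu,R_\alpha}$, which by $(1)$ is characterised by the equality $\mathcal{D}(\widehat G) = \mathcal{S}(\alpha)$, yields exactly the SYTs of shape $\nu$ with descent set equal to $\mathcal{S}(\alpha)$. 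Their number is, by definition, $d_{\nu,\alpha}$, recovering Theorem \ref{th:gessel}.

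For part $(b)$, the argument proceeds identically, except that Proposition \ref{l2new}$(c)$ replaces Proposition \ref{l2new}$(b)$. From $(c)$ we obtain $(b)(1)$ immediately: membership in $\rm LR_{\nu,R_\alpha}$ is equivalent to $\mathcal{D}(\widehat G) \supseteq \mathcal{S}(\alpha)\setminus \{\sum_{i=1}^r |\tilde\alpha_i| : 1 \le r \le k\}$. Applying Proposition \ref{prop:desc} again, under $G \mapsto \widehat G$ the containment $\mathcal{D}(\widehat G) \subseteq \mathcal{S}(\alpha)$ is automatic, so $\rm LR_{\nu,R_\alpha}$ corresponds bijectively to the SYTs of shape $\nu$ whose descent set lies between $\mathcal{S}(\alpha)\setminus \{\sum_{i=1}^r |\tilde\alpha_i|\}$ and $\mathcal{S}(\alpha)$. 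Combined with $c_{R_\alpha}^\nu = \#\rm LR_{\nu,R_\alpha}$ from \eqref{lr}, this gives $(b)(2)$.

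There is no real obstacle here; the content of the corollary is packaged entirely by Propositions \ref{l2new} and \ref{prop:desc}. The only subtlety worth being explicit about in the write-up is that the descent set of $\widehat G$ is always a subset of $\mathcal{S}(\alpha)$ whenever $G \in Tab(\nu,\alpha)$ (this is Proposition \ref{prop:desc}), so in $(b)(1)$ one may equivalently write $\mathcal{S}(\alpha)\setminus \{\sum_{i=1}^r|\tilde\alpha_i|\} \subseteq \mathcal{D}(\widehat G) \subseteq \mathcal{S}(\alpha)$, which explains the phrasing ``a subset of $\mathcal{S}(\alpha)$, contains $\mathcal{S}(\alpha)\setminus\{\sum_{i=1}^r|\tilde\alpha_i|\}$'' in $(b)(2)$.
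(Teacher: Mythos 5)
Your proposal is correct and follows exactly the route the paper intends: the corollary is stated as an immediate consequence of Proposition \ref{l2new} (parts $(b)$ and $(c)$ for the two cases), combined with the bijection $G\mapsto\widehat G$ of Proposition \ref{prop:desc} and the identity $c_{R_\alpha}^\nu=\#\rm LR_{\nu,R_\alpha}$ from \eqref{lr}; the paper gives no separate proof precisely because this is the whole argument. Your remark that $\mathcal{D}(\widehat G)\subseteq\mathcal{S}(\alpha)$ is automatic for $G\in Tab(\nu,\alpha)$ is the right point to make explicit.
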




\subsection{ The critical  set of a SSYT in $Tab(\nu,\alpha)$} \label{sec:critic}
We now reduce our study to compositions $\alpha$ with parts $\ge 2$.
 Given $T\in Tab(\nu,\alpha)$, recall that $\mathcal{D}(\widehat T)\subseteq \mathcal{S}(\alpha)$. The goal is to identify in the SSYT $T$ the entries of $\widehat T$ that are elements of $\mathcal{S}(\alpha)\setminus \mathcal{D}(\widehat T)$. More precisely, the numbers $j\in \{2,\dots,\ell(\alpha)\}$ in the filling of $T$ such that in the word $ w_{\widehat T}=w_1\cdots w_{\ell(\alpha)}$ (Subsection \ref{subsec:yama}) either it occurs $(1)$ $w_{\sum_{k=1}^{j-1}\alpha_k}=w_{1+\sum_{k=1}^{j-1}\alpha_k}$; or $(2)$ $w_{\sum_{k=1}^{j-1}\alpha_k}>w_{1+\sum_{k=1}^{j-1}\alpha_k}$.
  See Example \ref{varphi} $(b)$, $(i)$.

   The serious rejection for $T\in Tab(\nu,\alpha)$ to be a companion tableau of a LR ribbon in $\mathcal{LR}(R_\alpha,\nu)$ occurs when one has repeated letters in a  column of length $2$ of $\varphi_{R_\alpha}(T)$.
  This means that we are collecting in the filling of $T$  the numbers $j\in \{2,\dots,\ell(\alpha)\}$  verifying $(1)$. This numbers define a subset  of $\{2,\dots,\ell(\alpha)\}$  called
  the {\em critical  set  $\mathcal{C}(T)$ of $T$}.
   The
 set $\mathcal{C}(T)$ of critical numbers of $T$ verifies   $$\mathcal{C}(T)=\{j\in \{2,\dots,\ell(\alpha)\}: \text{$\sum_{k=1}^{j-1}\alpha_k$ and $1+\sum_{k=1}^{j-1}\alpha_k$  are entries in a same row of $\widehat T$}\}$$
 $$=\{j\in \{2,\dots,\ell(\alpha)\}: w_{\widehat T}=w_1\cdots w_{\ell(\alpha)}\; \text{and} \; w_{\sum_{k=1}^{j-1}\alpha_k}=w_{1+\sum_{k=1}^{j-1}\alpha_k}\}$$
 $$\subseteq\{j\in \{2,\dots,\ell(\alpha)\}:\sum_{k=1}^{j-1}\alpha_k\in \mathcal{S}(\alpha)\setminus \mathcal{D}(\widehat T)\}.$$

From Proposition \ref{l2new} and Corollary \ref{l3new}, we conclude that $\mathcal{C}(T)$ detects the elements $j$ in the alphabet $\{2,\dots,\ell(\alpha)\}$ for which $\sum_{k=1}^{j-1}\alpha_k\in S(\alpha)$ are not in the descent set of $\widehat T$ and give rise in $\varphi_{R_\alpha}(T)$ to a filling of a column of length $2$ with two repeated letters. This  column  of length two is obtained in the  overlapping of the rows $j-1$ and $j$ of $R_\alpha$  and is filled with a  same letter $i< j$.
Henceforth, because $T$ is a sequence of partitions   $0=\lambda^0\subseteq \lambda^1\subseteq\cdots \subseteq\lambda^{\ell(\alpha)}$,  the right most box of the horizontal strip $\lambda^{j-1}/\lambda^{j-2}$    is glued with the left most box of $\lambda^{j}/\lambda^{j-1}$, and one has
  \begin{proposition}\label{critical} Let $T\in Tab(\nu,\alpha)$ and $j\in\{2,\dots,\ell(\alpha)\}$. The number $j \in \mathcal{C}(T)$ or  $j$ is a critical number of $T$,  if for some $i\in\{1,\dots,j-1\}$,
 $\chi_{j-1}^i,\chi_{j}^i\neq 0$, and $\chi^k_{j-1}=\chi^h_{j}=0$ for all $k\leq i-1$ and $h\geq i+1$.
In this case, we also say that the integer $j$  generates the critical row $i$ of $T$.
\end{proposition}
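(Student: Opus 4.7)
The plan is to unpack the definition of the standardization $\widehat{T}$ applied to the two consecutive horizontal strips $\lambda^{j-1}/\lambda^{j-2}$ and $\lambda^{j}/\lambda^{j-1}$ of $T$, and to locate the two entries of $\widehat{T}$ that govern membership in $\mathcal{C}(T)$: namely $\sum_{k=1}^{j-1}\alpha_k$ (the largest label among the $(j-1)$'s of $T$) and $1+\sum_{k=1}^{j-1}\alpha_k$ (the smallest label among the $j$'s of $T$).

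First I would make precise the SW--NE reading order on a horizontal strip. If $\lambda^{j-1}/\lambda^{j-2}$ is the strip of $(j-1)$'s, the horizontal strip condition $\lambda^{j-2}_{i}\ge \lambda^{j-1}_{i+1}$ forces every box of row $i$ to lie strictly to the right of every box of row $i+1$ in the strip. Hence the SW--NE reading order traverses the rows from the bottommost nonempty one to the topmost nonempty one, reading each row from left to right. Consequently, the first box encountered in the $(j-1)$-strip lies in row $\max\{i:\chi^{i}_{j-1}>0\}$, and the last box encountered lies in row $\min\{i:\chi^{i}_{j-1}>0\}$; an analogous statement holds for the $j$-strip.

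Next I would combine this with the fact that the boxes of the $(j-1)$-strip of $T$ receive the labels $\sum_{k=1}^{j-2}\alpha_k+1,\dots,\sum_{k=1}^{j-1}\alpha_k$ in SW--NE order under standardization, while the boxes of the $j$-strip receive $\sum_{k=1}^{j-1}\alpha_k+1,\dots,\sum_{k=1}^{j}\alpha_k$. In particular, the label $\sum_{k=1}^{j-1}\alpha_k$ is placed in row $\min\{i:\chi^{i}_{j-1}>0\}$ of $\widehat{T}$, and the label $1+\sum_{k=1}^{j-1}\alpha_k$ is placed in row $\max\{i:\chi^{i}_{j}>0\}$ of $\widehat{T}$.

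Finally, by the defining property of $\mathcal{C}(T)$, one has $j\in \mathcal{C}(T)$ if and only if these two labels of $\widehat{T}$ sit in a common row, i.e.
$$\min\{i:\chi^{i}_{j-1}>0\}\;=\;\max\{i:\chi^{i}_{j}>0\}\;=:\;i,$$
which unwinds exactly to $\chi^{i}_{j-1},\chi^{i}_{j}\neq 0$ together with $\chi^{k}_{j-1}=0$ for $k\le i-1$ and $\chi^{h}_{j}=0$ for $h\ge i+1$. The common row $i$ is then the critical row generated by $j$. I do not foresee a genuine obstacle here: the content of the proposition is essentially a careful bookkeeping of the SW--NE enumeration, and the only point requiring attention is the verification that the horizontal strip condition genuinely forces the stated row-order of the reading, which the inequality $\lambda^{j-2}_i\ge \lambda^{j-1}_{i+1}$ delivers immediately.
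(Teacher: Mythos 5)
Your proof is correct and follows essentially the same route as the paper, which justifies the proposition by the single observation that under standardization the last (SW--NE) box of the strip $\lambda^{j-1}/\lambda^{j-2}$ receives the label $\sum_{k=1}^{j-1}\alpha_k$ and the first box of $\lambda^{j}/\lambda^{j-1}$ receives $1+\sum_{k=1}^{j-1}\alpha_k$, so that $j\in\mathcal{C}(T)$ amounts to the topmost row of the $(j-1)$-strip coinciding with the bottommost row of the $j$-strip. You have merely spelled out the bookkeeping (including the check that the horizontal strip condition forces the stated reading order) that the paper leaves implicit.
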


  The numbers in $\mathcal{S}(\alpha)\setminus \mathcal{D}(\widehat T)$ giving rise to the violation $(2)$  by inverting the increasing order in the filling of a column of length two in $\varphi_{R_\alpha}(T)$ are
negligible critical numbers,  because they may be removed anytime  without creating new  ones.  In the SSYT $T$ we collect the numbers $j\in\{2,\dots,\ell(\alpha)\}$ verifying condition $(2)$.
In fact, if, in a such column of $\varphi_{R_\alpha}(T)$, resulting from the overlapping of rows, say,  $j-1$ and $j$ of $R_\alpha$, one has $\young(:::bx\cdots y,w\cdots za)$, with $x\ge b>a\ge z$, we may easily correct this Yamanouchi filling, without creating new violations in the new Yamanouchi filling, by just reordering the entries of that column,
$\young(:::ax\cdots y,w\cdots zb)$, with $y\ge\cdots\ge x>a< b> z\ge\cdots\ge w$, to obtain an  LR ribbon.  This  tells that $j-1$ appears in $T$ only in row $b$ and possibly below, and $j$ only appears  in row $a$ and above. (The horizontal strip $\lambda^{j-1}/\lambda^{j-2} $ is strictly below the horizontal strip $\lambda^{j}/\lambda^{j-1} $.) Henceforth, we should  replace in row $a$ of $T$ the left most entry $j$ with $j-1$, and replace in row $b$ of $T$  the rightmost entry $j-1$ with $j$. One then says  $j$ is a negligible critical number of $T$. See Example \ref{varphi}, $(b)$, $(ii)$.

Canonical fillings of SSYTs do not have  negligible critical numbers and the critical numbers have an easier formulation. Note that the multiplicity of letter $j\ge i$ in row $i$ of $T\in Tab(\nu,\alpha)$ satisfies $\chi_{j}^i\le \alpha_{j}$.

\begin{proposition} \label{critic} Let $T\in Tab(\nu,\alpha)$ with canonical filling and $j+1\in  \{2,\dots,\ell(\alpha)\}$. Then
 $j+1$ is a critical number  of $T$ if and only if  $\chi_{j}^i=\alpha_{j}$ and $\chi_{j+1}^i=\alpha_{j+1}$ for some $i\in\{1,\dots,j\}$.
\end{proposition}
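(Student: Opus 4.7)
My plan is to trace the canonical construction of $T$ directly. For the backward direction, if $\chi_j^i = \alpha_j$ and $\chi_{j+1}^i = \alpha_{j+1}$, then all copies of $j$ and $j+1$ lie in row $i$, so $\chi_j^k = 0$ for $k \leq i-1$, $\chi_{j+1}^h = 0$ for $h \geq i+1$, and both multiplicities in row $i$ are positive (since $\alpha$ has parts $\geq 2$); this is exactly the criterion of Proposition~\ref{critical}.

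For the forward direction, recall from Lemma~\ref{kostkaproof} that canonical filling places letters in decreasing order $\ell(\alpha), \dots, 1$, each as a horizontal strip at the bottom of the current partition-shaped unfilled region, filling the longest columns first with right-to-left tie-breaking among columns of equal length. Let $\sigma$ denote this unfilled region just before letter $j+1$ is placed and let $b$ be its bottom row. I plan to show $b = i$, and then successively that $\chi_{j+1}^i = \alpha_{j+1}$ and $\chi_j^i = \alpha_j$.

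First, $\chi_{j+1}^i \neq 0$ forces row $i$ to lie in $\sigma$, giving $b \geq i$, while the greedy rule places the first copies of $j+1$ in row $b$, so $\chi_{j+1}^h = 0$ for $h > i$ forces $b \leq i$. Next, I would argue $\alpha_{j+1} < \sigma_i$ (the number of cells of $\sigma$ in row $i$) by contradiction: if $\alpha_{j+1} \geq \sigma_i$, letter $j+1$ exhausts row $i$ of $\sigma$, so the subsequent unfilled shape has bottom row at most $i-1$, and letter $j$ placed at that bottom is confined to rows $\leq i-1$, contradicting $\chi_j^i \neq 0$. The strict inequality $\alpha_{j+1} < \sigma_i$ then means the greedy rule fits all copies of $j+1$ in row $i$, giving $\chi_{j+1}^i = \alpha_{j+1}$. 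Finally, row $i$ of the new unfilled shape still has $\sigma_i - \alpha_{j+1} > 0$ cells, so letter $j$ is placed greedily starting in row $i$; the prohibition $\chi_j^k = 0$ for $k < i$ prevents any copy of $j$ from spilling into higher rows, forcing $\chi_j^i = \alpha_j$.

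I expect the main obstacle to be the contradiction ruling out $\alpha_{j+1} \geq \sigma_i$: the critical condition itself only forbids $j+1$ in rows below $i$ and permits overflow of $j+1$ above, so this possibility cannot be excluded from the constraints on $j+1$ alone. The decisive observation is that the critical condition couples the last row of $j+1$ with the first row of $j$, and any overflow of $j+1$ would eliminate row $i$ from the shape available to letter $j$ under the canonical rule, which is precisely what $\chi_j^i \neq 0$ disallows.
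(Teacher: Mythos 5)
Your proof is correct and takes essentially the same route as the paper's: the backward direction is the same immediate verification of the criterion in Proposition~\ref{critical}, and the forward direction traces the greedy bottom-up canonical construction exactly as the paper does. Your key step --- ruling out $\alpha_{j+1}\ge\sigma_i$ because the letter $j$ would then be confined to rows above $i$, contradicting $\chi_j^i\neq 0$ --- is precisely the observation in the paper's proof, only spelled out more explicitly via the unfilled region $\sigma$ and its bottom row.
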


\begin{proof} Recall $\ell (\nu)\le \ell(\alpha)$. If $T$ has canonical filling and $\chi^i_j,\chi^i_{j+1}\neq 0$  with $\chi^h_{j+1}=0$ for all $h\geq i+1$, then below row $i$ the entries are empty or bigger than $j+1$. Therefore there is no need to put $j+1$'s in rows above $i$ because positions of row $i$ have been used to put the letter $j$, that is, one  also has $\chi^h_{j+1}=0$ for all $h<i$.  Hence $\chi_{j+1}^i=\alpha_{j+1}$. Similarly $\chi^h_{j}=0$ for all $h>i$ because $j+1$ has to be filled first and there are no $j+1$ below row $i$. Hence $\chi_{j}^i=\alpha_{j}$.
\end{proof}
We then may conclude
\begin{proposition}\label{l2} Let $T\in Tab(\nu,\alpha)$ without negligible critical numbers. Then   $T\not\in\rm LR_{\nu, R_\alpha}$ if and only if $T$ has a critical number  $j+1\in\{2,\dots,\ell(\alpha)\}$ such that rows $j$ and $j+1$ of $R_{\alpha}$ overlap. In this case, the   column  of length two obtained in the  overlapping of rows $<\alpha_j>$ and $<\alpha_{j+1}>$ of $R_\alpha$  is filled with a  same letter $i< j+1$.
\end{proposition}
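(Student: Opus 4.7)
The plan is to reduce Proposition \ref{l2} to a direct bookkeeping exercise combining Proposition \ref{l2new}(a) with the two-case classification of elements of $\mathcal{S}(\alpha)\setminus\mathcal{D}(\widehat T)$ introduced just before Proposition \ref{critical}. First I would invoke Proposition \ref{l2new}(a), which tells us that $T\in\rm LR_{\nu,R_\alpha}$ fails precisely when there exists some $j\in\{1,\dots,\ell(\alpha)-1\}$ such that the rows $<\alpha_j>$ and $<\alpha_{j+1}>$ of $R_\alpha$ overlap while the partial sum $\sum_{k=1}^{j}\alpha_k$ is missing from $\mathcal{D}(\widehat T)$. Since by Proposition \ref{prop:desc} the descent set $\mathcal{D}(\widehat T)$ is always contained in $\mathcal{S}(\alpha)$, such a missing index corresponds to a position of the Yamanouchi word $w_{\widehat T}=w_1\cdots w_{|\alpha|}$ at which the expected ascent fails, i.e.\ $w_{\sum_{k=1}^{j}\alpha_k}\ge w_{1+\sum_{k=1}^{j}\alpha_k}$.

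Next I would split this failure into the two mutually exclusive cases recalled in Subsection \ref{sec:critic}: equality, which by definition exhibits $j+1$ as a critical number of $T$, and strict inequality, which exhibits $j+1$ as a negligible critical number of $T$. The standing hypothesis that $T$ has no negligible critical numbers immediately rules out the second possibility, so only the first can occur. This yields the stated equivalence in both directions.

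For the final assertion about the repeated letter, I would unpack the definition of critical number via Proposition \ref{critical}: there exists $i\le j$ such that $\chi_j^i,\chi_{j+1}^i\neq 0$ while $\chi_j^h=0$ for $h>i$ and $\chi_{j+1}^h=0$ for $h<i$. Translating this through the bijection $\varphi_{R_\alpha}$, the rightmost $j$ of row $i$ in $T$ is placed at the rightmost box of the strip $<\alpha_j>$ and contributes the letter $i$ there, while the leftmost $j+1$ of row $i$ in $T$ is placed at the leftmost box of $<\alpha_{j+1}>$ and likewise contributes $i$. When rows $<\alpha_j>$ and $<\alpha_{j+1}>$ of $R_\alpha$ overlap, these two boxes coincide in a single column of length two, which is therefore filled with the same letter $i<j+1$, as claimed.

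The only delicate point I anticipate is the careful alignment of three different languages: the Yamanouchi word labels $w_u$, the horizontal-strip picture of $T$ encoded by the multiplicities $\chi_j^i$, and the geometric column overlap in $R_\alpha$. All three viewpoints must be kept in sync for the last assertion to be rigorous, but no new idea beyond the previously established bijections and the definitions of Subsection \ref{sec:critic} is needed.
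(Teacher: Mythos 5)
Your argument is correct and follows the same route the paper takes implicitly: the paper states Proposition \ref{l2} as a direct consequence of Proposition \ref{l2new}, the case split $(1)/(2)$ of Subsection \ref{sec:critic} (with case $(2)$ excluded by the no-negligible-critical-numbers hypothesis), and Proposition \ref{critical}, offering no separate proof beyond ``we then may conclude''. One small correction in your last paragraph: since the reading word traverses each row strip of $R_\alpha$ from right to left, the last $j$ in the standard order of $T$ (the rightmost $j$ in its highest row $i$) lands in the \emph{leftmost} box of $<\alpha_j>$, and the first $j+1$ lands in the \emph{rightmost} box of $<\alpha_{j+1}>$; these, not the boxes you name, are the two boxes of the overlap column, and with this orientation fixed your conclusion that both carry the letter $i<j+1$ is exactly right.
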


\subsection{Effectiveness of critical numbers} \label{sec:efectcritic} The ribbon $R_\alpha$, with rows of length at least two, is now assumed to be  connected or monotone up to a permutation and rotation of the connected components of $R_\alpha$.
Since the ribbon can be monotone and disconnected, the overlapping partition $p$ is used to detect the effectiveness of the critical numbers of a companion tableau in $\rm LR_{\nu,R_\alpha^p}$.
\begin{definition}\label{p}
Let $T\in Tab(\nu,\alpha)$ and let $p$ be an overlapping partition for $\alpha$. A critical  number $j$ of $T$ is said to be $p$-effective if rows $j-1$ and $j$ of $R_{\alpha}^p$ overlap. Otherwise, the critical  number $j$ is said to be $p$-ineffective.
\end{definition}

 This is a reformulation of Corollary \ref{l3new} for ribbons uniquely determined by $\alpha$ and $p$.
\begin{theorem}\label{corl2} Let  $T\in Tab(\nu,\alpha)$  and  $p$ an overlapping partition for $\alpha$. Then,

$(a)$ $T \in\rm LR_{\nu, R^p_\alpha}$ only if $\#\mathcal{D}(\widehat T)\ge p_1$,

$(b)$ if $T$ has no negligible critical numbers and $\mathcal{C}(T)\neq\emptyset$, $T \in\rm LR_{\nu, R^p_\alpha}$ if and only if
every critical number of $T$ is  $p$-ineffective.
\end{theorem}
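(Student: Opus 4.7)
The plan is to unpack Theorem~\ref{corl2} as a reformulation of Corollary~\ref{l3new}(b) in the language of the overlapping partition $p$, together with Proposition~\ref{l2}. Write $R_\alpha^p = \bigoplus_{i=1}^{k}R_{\tilde\alpha_i}$ as the decomposition into connected components, where $k\in\{1,\dots,\ell(\alpha)\}$. By Proposition~\ref{prop:biject}, the overlapping partition $p$ satisfies $p_1=\ell(\alpha)-k$; this is the only bookkeeping fact needed to translate between the component description and the partition $p$.

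For part (a), I would apply Corollary~\ref{l3new}(b)(1) directly: the hypothesis $T\in\rm LR_{\nu,R_\alpha^p}$ gives
\[
\mathcal{D}(\widehat T)\supseteq\mathcal{S}(\alpha)\setminus\bigl\{\textstyle\sum_{i=1}^r|\tilde\alpha_i|:1\le r\le k\bigr\}.
\]
Now I would count. The set $\mathcal{S}(\alpha)$ has $\ell(\alpha)-1$ elements. Among the partial sums $\sum_{i=1}^r|\tilde\alpha_i|$, only the $k-1$ internal ones ($1\le r\le k-1$) lie in $\mathcal{S}(\alpha)$, while $r=k$ gives $|\alpha|\notin\mathcal{S}(\alpha)$. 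Hence the forced descents number at least $(\ell(\alpha)-1)-(k-1)=\ell(\alpha)-k=p_1$, which is the desired bound.

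For part (b), assume $T$ has no negligible critical numbers and $\mathcal{C}(T)\neq\emptyset$. By Proposition~\ref{l2}, $T\notin\rm LR_{\nu,R_\alpha}$ if and only if some critical number $j+1\in\mathcal{C}(T)$ has the property that rows $j$ and $j+1$ of $R_\alpha$ overlap. Comparing this with Definition~\ref{p}, overlap of consecutive rows $j$ and $j+1$ of $R_\alpha^p$ is precisely the condition that $j+1$ be $p$-effective. Contrapositively, $T\in\rm LR_{\nu,R_\alpha^p}$ if and only if every element of $\mathcal{C}(T)$ is $p$-ineffective, as required.

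No genuine obstacle is present; the content lies entirely in the bookkeeping that $p_1$ equals $\ell(\alpha)-k$ and in recognizing that $p$-effectiveness (Definition~\ref{p}) is the exact translation of ``rows $j$ and $j+1$ overlap in the filled shape'' from Proposition~\ref{l2}. The only place one must be careful is to observe that the negligible critical numbers covered in Subsection~\ref{sec:critic} can always be corrected without creating new critical numbers, so excluding them in the hypothesis of (b) is harmless and makes the characterization clean.
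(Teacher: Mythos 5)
Your proposal is correct and follows essentially the paper's own route: part (b) is, as in the paper, just the translation of Proposition \ref{l2} through Definition \ref{p}, and your part (a) performs the same count the paper does (the $p_1=\ell(\alpha)-k$ columns of length two each force a descent), merely packaged through Corollary \ref{l3new}(b)(1) rather than stated directly. The counting detail you add — that exactly $k-1$ of the partial sums $\sum_{i=1}^r|\tilde\alpha_i|$ lie in $\mathcal{S}(\alpha)$, so the forced descent set has exactly $\ell(\alpha)-k=p_1$ elements — is a correct and slightly more explicit justification than the paper gives.
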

\begin{proof}$(a)$ The number of columns of length two of $R_\alpha^p$ is $p_1$. Since $T$ has no negligible critical numbers, to avoid columns of length two filled with the same letter, we need that the descent set of $\widehat T$ has at least $p_1$ elements.

$(b)$ It is the translation of  Proposition \ref{l2} according  to the Definition \ref{p}.
\end{proof}

\section{ Characterization of   monotone ribbon  LR coefficients positivity  by means of linear inequalities}
\label{sec:positivity}
Throughout this section we consider  $\alpha$ a partition  with parts of length at least $2$, and overlapping partition $p$.
 Theorem \ref{corl2} says that
 $c_{R_{\alpha}^p}^{\nu}>0$ if and only if, whenever there exists $T\in Tab(\nu,\alpha)$ without negligible critical numbers and  $\mathcal{C}(T)\neq\emptyset$, then
every critical number of $T$ is  $p$-ineffective.
Theorem \ref{ThmNec}  gives a set of linear inequalities on the triple of partitions  $(\alpha,p,\nu)$ as necessary and sufficient conditions for the positivity of $c_{R^p_{\alpha}}^\nu$.
We split the proof of the {\em only if}  and {\em if} parts of Theorem \ref{ThmNec} into two subsections respectively.

 \subsection{Proof of the {\em only if part} of Theorem \ref{ThmNec} }
 If $c_{R_{\alpha}^p}^{\nu}=\#\rm LR_{\nu,R^p_\alpha}>0$ then  there exists $T\in\rm LR_{\nu,R^p_\alpha}\subseteq Tab(\nu,\alpha)$ and $\alpha\preceq \nu$. Let $p=(p_1,\dots,p_{\ell(\alpha)-1},0)$ where $\{p_1,\dots,p_{\ell(\alpha)-1}\}$ is a multiset of $\{\ell(\alpha)-k,\dots,1\}$  such that $p_{\ell(\tilde \alpha_i)}=p_{\ell(\tilde \alpha_i)+1}$ and $\alpha=\tilde \alpha_1\cdots\tilde\alpha_k$ with $R_{\tilde\alpha_i}$, $1\le i\le k$, the connected components of $R_\alpha$.
Therefore $T\in Tab(\nu,\alpha)$ with $\mathcal{D}(\widehat T)=\{\sum_{j=1}^s\alpha_j: s\in S\}\subseteq \mathcal{S}(\alpha)$ for some subset $S=\{s_1<\cdots<s_{|S|}\}\subseteq\{1,\dots,\ell(\alpha)-1\}$ satisfying \begin{equation}\label{eq:overlapdesc}[\{1,\dots,\ell(\alpha)-1\}\setminus\{\ell(\tilde\alpha_1),\dots,\ell(\tilde\alpha_1\tilde\alpha_2\cdots\tilde\alpha_{k-1})\}]\subseteq S=\{s_1<\cdots<s_{|S|}\}\subseteq\{1,\dots,\ell(\alpha)-1\}.\end{equation}
Observe that $|\{\sum_{j=1}^s\alpha_j: s\in \{s_i,\dots,s_{|S|}\}\}|=|\{s_i,\dots,s_{|S|}\}|\ge p_{s_i}$, for $1\le i\le |S|\le \ell(\alpha)-1$. Because $\alpha\preceq \nu$, by Remark \ref{re:ineqdom},
$\nu_i\le \alpha_i+\cdots+\alpha_{\ell(\alpha)}$, for $i\in\{1,\dots,\ell(\nu)\}$.
On the other hand, the $\alpha_i$ $i$'s constitute the $i$-th horizontal strip $\nu^i/\nu^{i-1}$ of $T$ whose rows belong to the first $\min\{i,\ell(\nu)\}$ rows of $T$, for $i\in\{1,\dots,\ell(\alpha)\}$.
Consider the SYT $\widehat T$ and $i\in\{1,\dots,\ell(\nu)\}$. If  $\alpha_1+\cdots+\alpha_i+\cdots+\alpha_s$, $i\le s\in S$, is a descent of $\widehat T$  in the $i$th row of $\widehat T$,  then $\alpha_1+\cdots+\alpha_s+1$ belongs to a row of $\widehat T$ strictly below row $i$. That is, for $1\le i\le q\le\ell(\alpha)-1$, if $\alpha_1+\cdots+\alpha_i+\cdots+\alpha_q$ is a descent of $\hat T$, then either $\alpha_1+\cdots+\alpha_q$ belongs to a row of $\widehat T$ strictly above row $i$, or $\alpha_1+\cdots+\alpha_q+1$ belongs to a row of $\widehat T$ strictly below row $i$. Observe that $|S\cap \{i,\dots,\ell(\alpha)-1\}|$ is the maximum number of descents of  $\hat T$ in  row $i$, and,  simultaneously, is at least equal to the  overlapping number $p_i$, the number of columns of length two among the last $\ell(\alpha)-i+1$ rows of $R_\alpha$,
\begin{equation} |S\cap \{i,\dots,\ell(\alpha)-1\}|\ge p_i.\end{equation} Hence $\nu_i\le \alpha_i+\cdots+\alpha_{\ell(\alpha)}-|S\cap \{i,\dots,\ell(\alpha)-1\}|\le \alpha_i+\cdots+\alpha_{\ell(\alpha)}-p_i$, for $i\in\{1,\dots,\ell(\nu)\}$.
   $\Box$

\subsection{Proof of the {\em if part} of Theorem \ref{ThmNec}}
\label{sec:rotation}
 Given 
the triple of partitions, $\nu$, and
$\alpha$, with parts $\geq 2$, and overlapping partition $p$, satisfying the linear inequalities on the  right hand side of \eqref{ineq},
the goal is now to exhibit  a  SSYT $T\in\rm LR_{R^p_\alpha,\nu}$.
In other words, assuming the linear inequalities on the  right hand side of  \eqref{ineq}, we  construct a SSYT $T\in Tab(\nu,\alpha)$ without  negligible critical numbers and $p$-effective critical numbers.  In more detail, we pick $T\in Tab(\nu,\alpha)$ with the {\em canonical filling}, thus  without negligible critical numbers,  and then, if it has $p$-effective critical numbers, one  modifies its filling according to a certain {\em rotation} procedure to remove them so that the new tableau  is  in $\rm LR_{R^p_\alpha,\nu}$. The application of rotation procedure does not create negligible critical numbers.
The linear inequalities on the right hand side of \eqref{ineq} guarantee that our rotation procedure is successful.
 \begin{remark} Let $\alpha\preceq \nu$ and $p$ an overlapping partition for $\alpha$.

$(a)$ If $\ell(\nu)=\ell(\alpha)$, given $T\in Tab(\nu,\alpha)$, the first entry of each row $i$ of $T$ is $i$ and $T$ has no critical numbers of any kind. The descent set of $\widehat T$ is $S(\alpha)$ and
every $T\in Tab(\nu,\alpha)$ is a companion tableau for an LR filling of $R^p_\alpha$. In this case, the linear inequalities \eqref{ineq} are trivially satisfied because below each row $i$ of $T$ one has at least $\ell(\alpha)-i\ge p_i$ entries and thereby $\nu_i\le\alpha_i+\cdots+\alpha_{\ell(\alpha)}-\ell(\alpha)+i\le\alpha_i+\cdots+\alpha_{\ell(\alpha)}-p_i$.
Also $c_{R_\alpha^p}^\nu=K_{\nu,\alpha}$.

$(b)$ If $\ell(\nu)=1$ then $\nu=(|\alpha|)$, $p=0$,  and $|Tab(\nu,\alpha)|=1$. The descent set of the sole $\widehat T$ is $S(\alpha)=\emptyset$, and linear inequalities \eqref{ineq} are trivially  satisfied with $p=0$. One has $c_{R_\alpha^p}^\nu=K_{\nu,\alpha}=1$.
\end{remark}
We shall consider $\nu$  with at least two rows and less than $\ell(\alpha)$ rows, $2\le \ell(\nu)
<\ell(\alpha)$.

We start with the case $\ell(\nu)=\ell(\alpha)-1$.

\begin{lemma}\label{p1} Let $\nu\in[\alpha,(|\alpha|-p_1,p_1)]$  with $\ell(\nu)=\ell(\alpha)-1$,
such that
 \begin{equation}\nu_i\leq \sum_{j\geq i}\alpha_j-p_i,\,\text{ for }1\leq i< \ell(\alpha).\nonumber\end{equation}
 Then, $c_{R_{\alpha}^p}^{\nu}>0$.
\end{lemma}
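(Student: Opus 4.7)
My plan is to produce a concrete $T \in Tab(\nu,\alpha)$ which is the companion of some $H \in \mathcal{LR}(R_\alpha^p,\nu)$, and conclude $c_{R_\alpha^p}^\nu = \#\mathcal{LR}(R_\alpha^p,\nu) > 0$. By Theorem \ref{corl2}(b), it suffices to construct $T$ with no negligible critical numbers and no $p$-effective critical numbers; the bijection \eqref{comp} then produces the desired LR ribbon $\varphi_{R_\alpha^p}(T) = H$.

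First, since $\alpha \preceq \nu$, Lemma \ref{kostkaproof} furnishes a canonical filling $T_0 \in Tab(\nu,\alpha)$, and by Proposition \ref{critic} its critical numbers are never negligible. If $T_0$ happens to have no $p$-effective critical number, we are done. Otherwise, let $j+1$ be a $p$-effective critical number of $T_0$ generating critical row $i$: by Proposition \ref{critic} all $\alpha_j$ copies of $j$ and all $\alpha_{j+1}$ copies of $j+1$ occupy row $i$, and rows $j$ and $j+1$ of $R_\alpha^p$ overlap, so $p_j > p_{j+1}$. I then intend to apply a \emph{rotation} that moves one copy of $j+1$ out of row $i$ into a strictly lower row of $T_0$, compensated by lifting an entry with letter $>j+1$ up into row $i$. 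This destroys the equality $\chi_{j+1}^{i} = \alpha_{j+1}$ while preserving the shape $\nu$ and the content $\alpha$. Because $\ell(\nu) = \ell(\alpha)-1$, below row $i$ the canonical filling must contain a letter strictly larger than $j+1$ which serves as the swap partner, and the modified filling remains semistandard and free of negligible critical numbers.

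The crucial feasibility check is that a single rotation does not introduce a fresh $p$-effective critical number further down. Here the full set of hypotheses $\nu_r \leq \sum_{q \geq r}\alpha_q - p_r$, for $1 \leq r \leq \ell(\alpha)-1$, enters: the nonnegative slack $\sum_{q \geq r}\alpha_q - p_r - \nu_r$ measures how much residual space row $r$ still has for accommodating displaced letters before two complete consecutive strips would be forced together at an overlapping row pair of $R_\alpha^p$. Processing the $p$-effective critical numbers starting from the one with the largest index $j+1$ (so that the rotation propagates downward rather than upward) and inducting on their total count, each step strictly reduces this count without violating any of the inequalities at a lower row, because each rotation only transfers mass from a row $i$ where the inequality is tight to a row $i'>i$ where it still has slack.

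The main obstacle will be controlling this cascade: after swapping one $j+1$ with some letter $k > j+1$ from row $i'$, one must verify that no new critical pair $(k{-}1,k)$ or $(k,k{+}1)$ arises at row $i'$ and, if it does, that it is $p$-ineffective or can be resolved by a subsequent rotation inside the remaining slack. This is handled by choosing the swap partner minimally (for instance, the leftmost entry of the topmost row below $i$ whose letter exceeds $j+1$) and invoking the inequality at row $i'$ together with $p_{i'} \leq p_i$. Once the procedure terminates, the resulting $T \in Tab(\nu,\alpha)$ satisfies the criterion of Theorem \ref{corl2}(b), hence $T \in \rm LR_{\nu,R_\alpha^p}$ and $c_{R_\alpha^p}^\nu > 0$.
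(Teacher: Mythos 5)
Your overall strategy is the paper's: take the canonical filling, observe (via Theorem \ref{corl2}) that it suffices to eliminate $p$-effective critical numbers, and do so by a rotation that trades the rightmost critical letter in the critical row for a larger letter taken from a lower row. But two steps in your plan do not hold as stated. First, you assert that ``below row $i$ the canonical filling must contain a letter strictly larger than $j+1$'' as a consequence of $\ell(\nu)=\ell(\alpha)-1$ alone. This fails exactly when the critical row is the last row of $T$, i.e.\ when the letter missing from the first column is $s=\ell(\alpha)$: then row $s-1$ carries all the $(s-1)$'s and all the $s$'s and there is nothing below to rotate with. The paper's proof uses the hypothesis $\nu_{s-1}\le\alpha_{s-1}+\alpha_s-p_{s-1}$ precisely here, to force $p_{s-1}=0$ and conclude that this critical number is $p$-ineffective, so no rotation is needed at all; your proposal never confronts this case, and it is the main place the inequalities enter --- not the ``residual slack at lower rows'' bookkeeping you describe.

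Second, your termination argument is internally inconsistent: you claim each rotation ``strictly reduces'' the count of $p$-effective critical numbers, but a rotation can create a new critical number --- namely the next larger letter, when the lower row held only a single copy of the swap letter --- so the count can stay constant. Moreover this new critical number appears in the \emph{same} critical row $i$ (all copies of the swap letter having been pulled up into it), not at the lower row $i'$ where you propose to look for new critical pairs. The paper closes the cascade by showing it has length at most two: if the newly critical letter were $s$, the inequality at row $s-2$ would force $p_{s-2}\le 1$, contradicting $p$-effectiveness; otherwise $\alpha_{j+2}\ge 2$ guarantees that a second rotation still leaves copies of the larger letter below the critical row, so no third critical number arises. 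Without these two case analyses --- which, together with the observation that $\ell(\nu)=\ell(\alpha)-1$ forces the canonical filling to have at most one critical number to begin with, is where all the hypotheses are actually consumed --- the proposal does not close.
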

\begin{proof} Let $s:=\ell(\alpha)$.
Let $T\in Tab(\nu,\alpha)$, with the {\em canonical filling}, and note that since $\ell(\nu)=s-1$, then the first column of $T$ has all letters of $[s]\setminus\{j\}$, for some $2\leq j\leq s$, and necessarily row $ j-1$  contains $\alpha_j$ letters $j$.  That is, the first entry of row $i$ of $T$ is $i$, for $i=1,\dots,j-1$, and is $i+1$ for $i=j,\dots,s-1$. Thus, $\chi^k_k\neq 0$, $1\le k\le j-1$, $\chi^{j-1}_j=\alpha_j$, $\chi^k_{k+1}\neq 0$, $j\le k\le s-1$, and $\chi_j^k=0$, $k\ge j$. The only row of $T$ which can potentially be  critical   is row $j-1$, since by Proposition \ref{critic}, $\chi_{j-1}^{j-1}\neq 0$ and $\chi_j^{j-1}=\alpha_j$.  That is, the rows $j-1$ and $j$ of $\varphi_{R_\alpha}(T)$ look like
\begin{equation}\label{critic1}
\begin{tabular}{cll|l|lllll}
\cline{4-9}
$<\alpha_{j-1}>$  &                       &  & $x$ & \multicolumn{1}{l|}{$\cdots$} & \multicolumn{1}{l|}{$x$} & \multicolumn{1}{l|}{$j-1$} & \multicolumn{1}{l|}{$\cdots$} & \multicolumn{1}{l|}{$j-1$} \\ \cline{2-9}
\multicolumn{1}{c|}{$<\alpha_{j}>$} & \multicolumn{1}{l|}{$j-1$} & $\cdots$ & $j-1$       &                 &                       &                       &                       &                       \\ \cline{2-4}
\end{tabular}
\end{equation}
or
\begin{equation}\label{critic2}
\begin{tabular}{cll|l|ll}
\cline{4-6}
$<\alpha_{j-1}>$                      &                       &  &$j-1$  & \multicolumn{1}{l|}{$\cdots$} & \multicolumn{1}{l|}{$j-1$} \\ \cline{2-6}
\multicolumn{1}{c|}{$<\alpha_{j}>$} & \multicolumn{1}{l|}{$j-1$} & $\cdots$ & $j-1$ &                       &                       \\ \cline{2-4}
\end{tabular}
\end{equation}
where the word $x\cdots x=(j-2)^r$, $r\ge 0$,  may be empty.
If $j$ is not a critical number, or if it is a $p$-ineffective critical number \eqref{critic1} then, by Proposition \ref{l2}, $\varphi_{R_\alpha}(T)$ is a  skew SSYT.

Assume now that $j$ is $p$-effective critical  number of $T$ \eqref{critic2}. In particular, this means that $\chi_{j-1}^{j-1}=\alpha_{j-1}$. Notice that if $j=s$, then
$$\nu_{j-1}=\nu_{s-1}=\alpha_{s-1}+\alpha_s\leq\alpha_{s-1}+\alpha_s-p_{s-1},$$
which implies $p_{s-1}=0$. That is, rows $j-1$ and $j$ of $R_\alpha^p$ do not overlap, which contradicts the fact that $j=s$ is $p$-effective critical.
So, we must have $2\leq j<s$, and, in particular, row $j$ of $T$ has at least one integer  $j+1$. Table \ref{table:pcr} depicts rows  $j-1$ and $j$ of $T$, where $\ast$ denotes   $\chi_{j+1}^{j-1}> 0$ boxes with the letter $j+1$, or the empty cell  if $\chi_{j+1}^{j-1}=0$,

\begin{table}[H]
\centering

\begin{tabular}{r|c|c|c|c|ccccccc}
\cline{2-12}
row $j-1$ &$j-1$ & $j-1$&$\cdots$& $j-1$ & \multicolumn{1}{l|}{$\cdots$} & \multicolumn{1}{l|}{$j-1$} & \multicolumn{1}{l|}{$\bf{j}$} & \multicolumn{1}{l|}{$\cdots$} & \multicolumn{1}{l|}{$\bf{j}$} & \multicolumn{1}{l|}{$\bf{j}$} & \multicolumn{1}{l|}{$\bf{\ast}$} \\ \cline{2-12}
 row $j$ & $\bf{j+1}$ & $\bf{j+1}$& $\cdots$ &$\bf{j+1}$ & \multicolumn{6}{l}{$\cdots$}                                                                                                                          \\ \cline{2-5}
\end{tabular}
\caption{Rows $j-1$ and $j$ of $T$}
\label{table:pcr}
\end{table}

Perform the procedure Rotation described in Table \ref{table:proc1}  with $\ell=j-1$,  $a=j$ and $ b=j+1$ on the tableau $T$.
\begin{table}[H]
\centering

\begin{tabular}{l}
\hline
 {\bf Procedure:} Rotation\\ \hline
 {\bf Data:} Tableau $T$; Integers $a$ and  $\ell$;\\
 {\bf Begin}\\
 \quad Let $\ell'>\ell$ be  the smallest integer such that row $\ell'$ of $T$ has an\\
 \quad integer $b$ greater or equal to the rightmost letter in row $\ell$;\\
  \quad Rotate by one turn in anticlockwise order all letters greater or equal to\\
\quad $a$ in row $\ell$, and all letters $b$ of row $\ell'$ of $T$;\\
  {\bf Stop}\\\hline
\end{tabular}
\caption{Procedure: Rotation}
\label{table:proc1}
\end{table}

That is, rotate the highlight letters $j$ and $j+1$ of $T$ (Table \ref{table:pcr}) in anticlockwise order to obtain the rows  shown in Table \ref{table:PCa2}, and denote by $T'$ the tableau obtained from $T$ by this operation.

\begin{table}[H]
\centering

\begin{tabular}{r|c|c|c|c|ccccccc}
\cline{2-12}
row $j-1$ & $j-1$ &$j-1$&$\cdots$& $j-1$ & \multicolumn{1}{l|}{$\cdots$} & \multicolumn{1}{l|}{$j-1$} & \multicolumn{1}{l|}{$\bf{j}$} & \multicolumn{1}{l|}{$\cdots$} & \multicolumn{1}{l|}{$\bf{j}$} & \multicolumn{1}{l|}{$\bf{j+1}$} & \multicolumn{1}{l|}{$\bf{\ast}$} \\ \cline{2-12}
 row $j$ & $\bf{j}$ & $\bf{j+1}$& $\cdots$&$\bf{j+1}$ & \multicolumn{6}{l}{$\cdots$}                                                                                                                          \\ \cline{2-5}
\end{tabular}
\medskip
\caption{Rows $j-1$ and $j$ of $T'$}
\label{table:PCa2}
\end{table}

 {\em We recall that we are assuming $\alpha $ a partition and thus $\alpha_{j-1}\ge \alpha_j\ge\alpha_{j+1}$}. The new tableau $T'$ is still semistandard and the integer $j$ is no longer critical, since $\chi_j^{j}>0$.
Notice, however, that if $\chi_{j+1}^j=1$ in $T$, then in $T'$ the integer $j+1$ is  critical.
Therefore, if $\chi_{j+1}^j>1$ in $T$, or $\chi_{j+1}^j=1$ in $T$ and $j+1$ is  $p$-ineffective critical, then $\varphi_{R_\alpha^p}(T')$ is  a skew SSYT.
So, assume   $\chi_{j+1}^j=1$, $j$  $p$-effective critical in $T$, and  in addition rows $j$ and $j+1$ of $R^p_\alpha$ overlap ($j+1$ is $p$-effective in $T'$).
If $j+1=s$, (Table \ref{table:PC11}) then $\nu_{s-1}=1$ and $p_{s-2}=2$,
\begin{table}[H]
\centering

\begin{tabular}{r|c|c|c|c|ccccc}
\cline{2-10}
row $s-2$ &$s-2$ &  $\cdots$ & \multicolumn{1}{l|}{$s-2$} & \multicolumn{1}{l|}{$\bf{ s-1}$} & \multicolumn{1}{l|}{$\cdots$} & \multicolumn{1}{l|}{$\bf{s-1}$} & \multicolumn{1}{l|}{$\bf{s}$} & \multicolumn{1}{l|}{$\cdots$} & \multicolumn{1}{l|}{$\bf{s}$} \\ \cline{2-10}
 row $s-1$ & $\bf{s-1}$                                                                                                                          \\ \cline{2-2}
\end{tabular}
\caption{Rows $s-2$ and $s-1$ of $T'$}
\label{table:PC11}
\end{table}
\noindent and
$\nu_{s-2}=\alpha_{s-2}+ \alpha_{s-1}+(\alpha_s-1)\leq \alpha_{s-2}+\alpha_{s-1}+\alpha_s-p_{s-2},$
that is,
$p_{s-2}\leq 1.$
A contradiction, then the rows $s-1$ and $s$ of $R^p_\alpha$ cannot overlap, and $j+1=s$ is not $p$-effective critical in $T'$.

So we must have $j+1<s$, and  there must be integers other than $j+1$ in row $j$ of $T'$, since otherwise the rows of $T'$ below row $j$ would have only one box, which in turn would imply $2\le\alpha_{j+2}=1$, a contradiction. So there are letters $j+2$ in row $j$ and the number of letters $j+2$ below row $j-1$ is $\alpha_{j+2}\ge 2$ (Table \ref{table:PC2}). Apply the procedure Rotation with  $a=j+1$ and $\ell=j-1$ to $T'$,
\begin{table}[H]
\centering
\begin{tabular}{r|c|c|c|c|ccccccccccc}
\cline{2-13}
row $j-1$ & $j-1$ &$j-1$&$\cdots$& $j-1$ & \multicolumn{1}{l|}{$\cdots$} & \multicolumn{1}{l|}{$j-1$} & \multicolumn{1}{l|}{$j$} & \multicolumn{1}{l|}{$\cdots$} & \multicolumn{1}{l|}{$j$} & \multicolumn{1}{l|}{$\bf{j+1}$} & \multicolumn{1}{l|}{$\cdots$}& \multicolumn{1}{l|}{$\bf{j+1}$} \\ \cline{2-13}
 row $j$ & $j$ & $\bf{j+2}$& $\cdots$&$\bf{j+2}$ & \multicolumn{6}{l}{$\cdots$}                                                                                                                          \\ \cline{2-5}
\end{tabular}
\medskip
\caption{Rows $j-1$ and $j$ of $T'$}
\label{table:PC2}
\end{table}
\noindent and let $T''$ be the resulting tableau (Table \ref{table:PC3}),
\begin{table}[H]
\centering
\begin{tabular}{r|c|c|c|c|ccccccccccc}
\cline{2-13}
row $j-1$ & $j-1$ &$j-1$&$\cdots$& $j-1$ & \multicolumn{1}{l|}{$\cdots$} & \multicolumn{1}{l|}{$j-1$} & \multicolumn{1}{l|}{$j$} & \multicolumn{1}{l|}{$\cdots$} & \multicolumn{1}{l|}{$j$} & \multicolumn{1}{l|}{$\bf{j+1}$} & \multicolumn{1}{l|}{$\cdots$}& \multicolumn{1}{l|}{$\bf{j+2}$} \\ \cline{2-13}
 row $j$ & $j$ & $\bf{j+1}$& $\cdots$&$\bf{j+2}$ & \multicolumn{6}{l}{$\cdots$}                                                                                                                          \\ \cline{2-5}
\end{tabular}
\medskip
\caption{Rows $j-1$ and $j$ of $T''$}
\label{table:PC3}
\end{table}

This new tableau is semistandard and $j+1$ is no longer a critical number, since there is now a letter $j+1$ in row $j$. Also, since $\alpha_{j+2}\geq 2$, there must be integers $j+2$ below row $j-1$. Thus, $T''$ does not have critical numbers and then $\varphi_{R_\alpha^p}(T'')$ is  a skew SSYT.
\end{proof}

\begin{remark}
Notice that when applying the procedures, described in the proof of the result above, to a tableau $T$ with only one critical number $j$ in row $j-1$, we only modify rows $j-1$ and $j$ of $T$. Moreover, in row $j$, only the integers $j+1$, and possible $j+2$, are acted upon.
The rows  above row $j-1$, as well as the letters in row $j-1$ to the left of the letters $j$, are not considered for the application of the procedure.
\end{remark}

\begin{example}
Let $\nu=(8,1)$ and $\alpha=(3,3,3)$, and consider the tableau $$T=\young(11122233,3)\in Tab(\nu,\alpha).$$
The tableau $T$ has only one critical  number: the integer $2$, that is,
 the descent of $\widehat T$ is $\{\alpha_1+\alpha_2\}$. If $R_\alpha=R_{\alpha_1}\oplus R_{(\alpha_2,\alpha_3)}$,
equivalently, $p=(1,1,0)$, then $\varphi_{R_\alpha}(T)$ is SSYT and the integer $2$ is not  $p$-effective critical, and so
$$\varphi_{R_\alpha}(T)=\young(:::::111,::111,112)$$
is a skew SSYT.  Note also, $\nu_1=8\le \sum_{i=1}^3\alpha_i-1,\,\nu_2=1\le \alpha_2+\alpha_3-1,\; \nu_3\le\alpha_3-0$.

If $p=(1,0,0)$ then $R_\alpha=R_{(\alpha_1,\alpha_2)}\oplus R_{(\alpha_3)}$, $2$ is a $p$-effective critical number and $\varphi_{R_\alpha}(T)$ is not SSYT. Perform the procedure Rotation on $T$ with $a=2$ and $\ell=1$ to get
$$T=\young(11122233,3)\rightarrow\young(11122333,2)=T'.$$
The tableau $T'$ has no  effective critical numbers for the overlapping partition $p=(1,0,0)$, the descent set of $\widehat T'$ is $\{\alpha_1\}$, and therefore $$\varphi_{R_\alpha^p}(T')=\young(:::::111,:::112,111)$$
is a skew SSYT.
There is no connected  LR ribbon of shape $R_\alpha$ and content $\nu$: if $p=(2,1,0)$, $\nu_1=8>|\alpha|-2=9-2$.
\end{example}

\begin{example}
Let $\nu=(9,3)$ and $\alpha=(4,3,3,2)$, and consider the tableau $$T=\young(111122233,344)\in Tab(\nu,\alpha).$$
The letter $2$ is the only critical number of $T$, and is effective when we consider the overlapping partition $p=(3,2,1,0)$. So, we apply the procedure Rotation on $T$ with $a=2$ and $\ell=1$:
$$T=\young(111122233,344)\rightarrow \young(111122333,244)=T'.$$
In $T'$, the number $2$ is no longer  critical. However, a new critical number was created: the number $3$. So we apply   Rotation on $T'$ with $a=3$ and $\ell=1$ to get the tableau $$T''=\young(111122334,234),$$
which has no critical numbers. It follows that
$$\varphi_{R_\alpha}(T'')=\young(:::::1111,:::112,:113,14)$$
is a skew SSYT. Note $\nu_1=9\le |\alpha|-3=12-3$, $\nu_2=3\le 8-2$, $\nu_3=0\le 5-1$.
\end{example}

 \begin{lemma}\label{p2} Let $\nu\in[\alpha,(|\alpha|-p_1,p_1)]$
with  $\ell(\nu)=\ell(\alpha)-k$, $1\le k\le \ell(\alpha)-2$, and satisfying
 \begin{equation}\nu_i\leq \sum_{j\geq i}\alpha_j-p_i,\,\text{ for }1\leq i\leq \ell(\nu).\nonumber \end{equation}
If $T$ is the SSYT with  canonical filling in $ Tab(\nu,\alpha)$ and  has $\mathcal{C}(T)=\{j_1,j_2,\ldots,j_k\}$ with $j_{i+1}=j_i+1$, for $i=1,\ldots,k-1$,
 then, $c_{R_{\alpha}^p}^{\nu}>0$.
\end{lemma}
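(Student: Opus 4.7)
The plan is to mimic the single-critical-number argument of Lemma \ref{p1} but carry it out iteratively on the consecutive block $\{j_1,j_1+1,\dots,j_1+k-1\}$ of critical numbers. First I would analyze the shape of the canonical filling $T\in Tab(\nu,\alpha)$ when $\ell(\nu)=\ell(\alpha)-k$. By Proposition \ref{critic}, a letter $j+1$ is critical exactly when $\chi_j^i=\alpha_j$ and $\chi_{j+1}^i=\alpha_{j+1}$ for some $i\le j$. Combining this with the greedy bottom-to-top, right-to-left prescription of Lemma \ref{kostkaproof}, the hypothesis $\mathcal{C}(T)=\{j_1,\dots,j_1+k-1\}$ forces the relevant letters to pile up so that the whole horizontal strips $<\alpha_{j_s}>$ and $<\alpha_{j_s+1}>$ merge into a common row $i_s$ of $T$; I would record for each $s=1,\dots,k$ which row $i_s$ of $T$ the critical number $j_s$ generates.

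Next I would loop through the critical numbers from $j_1$ upward, and for each one that is $p$-effective apply the Rotation procedure of Table \ref{table:proc1} with $\ell=i_s$ and $a=j_s$, just as in Lemma \ref{p1}. The effect is to swap a single $j_s$ in the bottom strip with a $j_s$ or $j_s+1$ from the top strip, killing the critical number $j_s$ while preserving semistandardness and introducing no negligible critical numbers (because canonicality is respected by a single anticlockwise one-turn rotation of a neighbouring pair). After disposing of $j_s$, exactly as in Lemma \ref{p1}, a second rotation with $a=j_s+1$ may be required to neutralize the $j_s+1$ that the first rotation may have turned critical; here the hypothesis that $j_s+1\in\mathcal{C}(T)$ is consecutive to $j_s$ is crucial, because it guarantees that the needed letter $j_s+2$ is available below row $i_s$ to make the second rotation legal.

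The key technical step is to show that the linear inequalities $\nu_i\le\sum_{q\ge i}\alpha_q-p_i$ provide exactly the slack needed for the cascade of rotations to terminate without a collision. Concretely, for the bottommost $p$-effective critical index $j_s$ I would argue that if neither the first nor the second rotation can be performed legally then we must have $\nu_{i_s}=\sum_{q\ge j_s-1}\alpha_q-(\text{something}<p_{i_s})$, contradicting the hypothesis, exactly in the style of the $j=s$ sub-case of Lemma \ref{p1}. I would then observe that the rotations acting on consecutive critical numbers do not interact destructively: the rotation for $j_s$ only modifies rows $i_s$ and $i_s+1$ of $T$, while the rotation for $j_{s+1}=j_s+1$ acts on rows $i_{s+1}\ge i_s$ involving letters $\ge j_s+1$, so the earlier rotations leave the hypotheses of Proposition \ref{critic} for the later ones intact.

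The main obstacle I expect is bookkeeping the two-step rotation (swap plus a corrective swap) when two consecutive critical numbers generate the same row, since a rotation that cures $j_s$ may create a fresh critical number that coincides with $j_{s+1}$, and one has to verify that the second rotation, applied with $a=j_{s+1}+1$, still has a letter $j_{s+1}+2$ available below. This reduces to the inequality $\nu_{i_s}\le\sum_{q\ge i_s}\alpha_q-p_{i_s}$ combined with $p_{i_s}\ge p_{i_s+1}+1$ whenever rows $i_s$ and $i_s+1$ of $R^p_\alpha$ overlap, which is precisely the reading of the overlapping partition encoded by Definition \ref{overlap}. Once all $p$-effective critical numbers have been removed, the resulting tableau $T^\star$ lies in $\rm LR_{\nu,R^p_\alpha}$ by Theorem \ref{corl2}, so $c_{R^p_\alpha}^\nu\ge 1>0$.
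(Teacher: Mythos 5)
Your overall strategy -- iterate the Rotation procedure of Lemma \ref{p1} over the block of consecutive critical numbers -- is the same as the paper's, but your plan has a genuine gap in the two places where the lemma actually needs an argument. First, the structure of the canonical filling is sharper than you record: because the critical numbers are consecutive, Proposition \ref{critic} forces all the strips $<\alpha_{j_1}>,\dots,<\alpha_{j_k}>$ into a \emph{single} row, namely row $j_1-1$ (the first column of $T$ carries $[\ell(\alpha)]\setminus\{j_1,\dots,j_k\}$), so there is one critical row, not a family $i_1,\dots,i_k$; all rotations are anchored at $\ell=j_1-1$. Second, and more seriously, your termination argument only covers the two-step correction of Lemma \ref{p1} (one rotation plus one corrective rotation using a letter $j_s+2$). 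In the general case each pass of rotations can send \emph{all} copies of some letter $r>j_k$ up into row $j_1-1$, creating a fresh batch of critical numbers $r_1,\dots,r_{k'}$, which after another pass can create $q_1,\dots,q_{k''}$, and so on; nothing in your plan bounds this cascade. The paper closes this by applying the hypothesis at $i=j_1-1$ to get
\begin{equation*}
p_{j_1-1}\;\le\;\alpha_{j_k+2}+\cdots+\alpha_{\ell(\alpha)}+\chi^{\,j_1-1}_{j_k+1},
\end{equation*}
which simultaneously guarantees that there are always enough letters $\ge j_k+1$ below row $j_1-1$ to feed every rotation, and that the total number of $p$-effective critical numbers ever produced (original plus all newly created ones, $k+k'+k''+\cdots$) is at most $p_{j_1-1}$, so the process must stop. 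Without this single inequality your cascade has no a priori reason to terminate or to always find a legal letter to rotate in; supplying it, and collapsing your rows $i_s$ to the one row $j_1-1$, would turn your plan into the paper's proof.
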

\begin{proof}
 Let $T$ be the canonical filling in $Tab(\nu,\alpha)$ with $\mathcal{C}(T)=\{j_1,j_2,\ldots,j_k\}$ such that $j_{i+1}=j_i+1$ for $i=1,\ldots,k-1$.
Then,  the first column of $T$ has all letters of $[s]\setminus\{j_1,j_2,\ldots,j_k\}$, and row $j_1-1$ has $\alpha_i$ letters $j_i$, for $i=1,\ldots,k$. We are assuming that $j_1$ is  critical but $j_k+1$ is not, row $j_1-1$ also has $\alpha_{j_1-1}$ letters $j_1-1$ and $0\leq\chi_{j_k+1}^{j_1-1}<\alpha_{j_k+1}$ letters $j_k+1$,
thus, row $j_1-1$ of $T$ satisfy
 $$\nu_{j_1-1}=\alpha_{j_1-1}+\alpha_{j_1}+\cdots+\alpha_{j_k}+\left(\alpha_{j_k+1}-\chi_{j_k+1}^{j_1}\right)\leq \alpha_{j_1-1}+\alpha_{j_1}+\cdots+\alpha_{s}-p_{j_1-1},$$
that is,
\begin{equation}\label{eq1}
p_{j_1-1}\leq \alpha_{j_k+2}+\cdots+\alpha_s+\chi_{j_k+1}^{j_1}
\end{equation}
where $0<\chi_{j_k+1}^{j_1}$.
The number  of $p$-effective critical numbers of $T$, which  are at most $k$, must be less than or equal to $p_{j_1-1}$. Thus, by \eqref{eq1}, there are at least $p_{j_1-1}$ integers greater than or equal to $j_k+1$ below row $j_1-1$ of $T$ and we can perform  procedure Rotation 1 on $T$ with $\mathcal{C}(T)=\{j_1,\ldots,j_k\}$ and $\overline{\ell}=j_1-1$.

\begin{table}[H]
\centering

\begin{tabular}{l}
\hline
 {\bf Procedure:} Rotation 1\\ \hline
 {\bf Data:} Tableau $T$; Set $\mathcal{C}(T)=\{j_1<\ldots<j_k\}$; Integer $\overline{\ell}$;\\
 {\bf Begin}\\
 \quad {\bf For} $i=1$ to $k$ do\\
 \quad\quad {\bf If} $j_i$ is an  $p$-effective critical point of $T$, perform procedure\\
 \quad\quad Rotation (Table \ref{table:proc1}) with $a=j_i$ and $\ell=\overline{\ell}$;\\
 \quad \quad {\bf End If}\\
 \quad{\bf End For}\\
  {\bf Stop}\\\hline
\end{tabular}
\caption{Procedure: Rotation 1}
\label{table:proc2}
\end{table}

Let $T'$ be the tableau resulting from the application of Procedure  Rotation 1 (Table \ref{table:proc2}) on $T$. Notice that the assumption of $\alpha$ a partition and the canonical filling of $T$ asserts that $T'$ is semistandard. Moreover, the integers $j_1,\ldots,j_k$ are not critical  numbers of $T$ since there are letters $j_1,\ldots,j_k$ below row $j_1-1$. However, the operations performed on $T$ to produce $T'$ may create new critical  numbers, all of which are in row $j_1-1$. This only happens  when all letters of an integer, say $r>j_k$, are sent to row $j_1-1$. Note that $r$ must be one of the first $k$ letters below row $j_1-1$ which are greater or equal to the rightmost letter of row $j_1-1$. Let $r_1,\ldots,r_{k'}$ be the new critical  numbers created in $T'$. If they are  $p$-effective, then by \eqref{eq1}, we must have
$$k+k'\leq p_{j_1-1}.$$
This means that below row $j_1-1$ of $T'$ there exist at least $k'$ integers greater or equal to the rightmost letter of row $j_1-1$, and we can perform procedure Rotation 1 on $T'$ with $\mathcal{C}(T)=\{r_1,\ldots,r_{k'}\}$ and $\overline{\ell}=j_1-1$, obtaining a new tableau $T''$, where $r_1,\ldots,r_{k'}$ are not critical {\olga }. Again, new critical  numbers $q_1,\ldots,q_{k''}$, with $r_{k'}<q_1,\ldots,q_{k''}$ may occur, in which case we repeat the process. Note that since the number of  $p$-effective critical numbers cannot exceed $p_{j_1-1}$, this process must terminate.

Therefore, the tableau $\widetilde{T}$ obtained after this procedure is semistandard and has no critical  numbers. We can conclude that $\varphi_{R_\alpha}(\widetilde{T})$ is semistandard.
\end{proof}

\begin{remark}Notice that Lemma \ref{p1} is a special case of Lemma \ref{p2}.
Also, notice that the tableau $\widetilde{T}$ obtained after the process described in the result above only differs from $T$ between the rows $j_1-1$, the ones having the critical  numbers, and some row below it, say $j$, from the leftmost integer of $j$ until the last integer in row $j$ that has been rotated to row $j_1-1$.
\end{remark}

\begin{example}
Let $\nu=(9,2,2,2)$, $\alpha=(3,2,2,2,2,2,2)$, and consider the overlapping vector $p=(6,5,4,3,2,1,0)$ and the tableau
$$T=\young(111223344,55,66,77)\in Tab(\nu,\alpha).$$
The letters $2,3$ and $4$ are consecutive  $p$-effective critical numbers of  $T$. Apply procedure Rotation 1 with $\mathcal{C}(T)=\{2,3,4\}$ and $\overline{\ell}=1$:
$$T\rightarrow\young(111233445,25,66,77)\rightarrow\young(111234455,23,66,77)\rightarrow\young(111234556,23,46,77)=T'.$$

Now, the letter $5$ is the only critical  number  of the  resulting tableau $T'$. So, we apply Rotation 1 again on $T'$ with $\mathcal{C}(T)=\{5\}$ and   $\overline{\ell}=1$:
$$T'=\young(111234556,23,46,77)\rightarrow \young(111234566,23,45,77)=T''.$$
Now, the letter $6$ is the only critical  number of the  resulting tableau $T''$. So, we apply Rotation 1 again on $T'$ with $\mathcal{C}(T)=\{6\}$ and   $\overline{\ell}=1$:
$$T''=\young(111234566,23,45,77)\rightarrow \young(111234567,23,45,67)=\widetilde{T}.$$
The tableau $\widetilde{T}$ has no critical  numbers and thus $\varphi_{R_\alpha}(\widetilde{T})$ is a skew SSYT.
\end{example}

We  now can prove the general case.

\begin{theorem}\label{ThmSuf}
Let $\nu\in[\alpha,(|\alpha|-p_1,p_1)]$   where  $\ell(\nu)=\ell(\alpha)-k$, $1\le k\le \ell(\alpha)-2$,
and satisfying \begin{equation}\nu_i\leq \sum_{j\geq i}\alpha_j-p_i,\,\text{ for }1\leq i\leq \ell(\alpha).\nonumber\end{equation}
Then, $c_{R_{\alpha}^p}^{\nu}>0$.
\end{theorem}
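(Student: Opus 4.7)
The plan is to reduce the general case to the block-consecutive case handled in Lemma \ref{p2}. Let $T\in Tab(\nu,\alpha)$ be the SSYT with canonical filling (Lemma \ref{kostkaproof}). By Proposition \ref{critic}, a number $j+1\in\{2,\dots,\ell(\alpha)\}$ belongs to $\mathcal{C}(T)$ precisely when there exists a row index $i$ such that $\chi_{j}^i=\alpha_j$ and $\chi_{j+1}^i=\alpha_{j+1}$. First I would observe that because the canonical filling assembles horizontal strips greedily from the bottom, the critical numbers that generate a common row $i$ of $T$ form a consecutive block of integers; hence $\mathcal{C}(T)$ decomposes uniquely into maximal consecutive blocks $B_1,B_2,\dots,B_m$ with $B_t=\{j_1^t,j_1^t+1,\dots,j_1^t+k_t-1\}$ generating a critical row $r_t:=j_1^t-1$, and the rows $r_1<r_2<\cdots<r_m$ are pairwise distinct.

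Next I would process the blocks one at a time, starting with $B_1$. The key point is that block $B_t$ is a ``local'' version of the situation treated in Lemma \ref{p2}: the critical letters of $B_t$ all sit at the right end of row $r_t$, immediately followed (if at all) by some letters strictly greater than $j_1^t+k_t-1$, and these are preceded by $\alpha_{r_t}$ letters $r_t$. Exactly as in Lemma \ref{p2}, I would then verify that the hypothesis
\[
\nu_{r_t}\le \sum_{q\ge r_t}\alpha_q - p_{r_t}
\]
translates into the availability inequality
\[
p_{r_t}\le \sum_{q\ge j_1^t+k_t+1}\alpha_q + \chi_{j_1^t+k_t}^{r_t},
\]
guaranteeing that below row $r_t$ there are at least as many letters $\ge j_1^t+k_t$ as there are $p$-effective critical numbers inside $B_t$. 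This is the precise room needed to run Procedure Rotation 1 (Table \ref{table:proc2}) with data $\mathcal{C}(T)=B_t$ and $\overline\ell=r_t$, producing a tableau $T^{(t)}$ in which all elements of $B_t$ are no longer critical; any new critical numbers introduced by the rotation are strictly larger than $j_1^t+k_t-1$ and still reside in row $r_t$, so the same inequality allows us to iterate Rotation 1 on them, and since each iteration strictly increases the smallest affected letter, the process terminates in finitely many steps.

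The main obstacle, and the reason the blocks must be handled separately rather than all at once, is ensuring that the local modifications made while processing $B_t$ do not disturb the other blocks. Here I would use the localization remark that follows Lemma \ref{p2}: Procedure Rotation 1 applied at row $r_t$ only alters entries in row $r_t$ (to the right of the last letter $r_t$) and a single row below $r_t$, and only entries whose values lie in an interval bounded above by the largest member of $B_t$ plus one. Because the rows $r_1,\dots,r_m$ are distinct and the value intervals of distinct blocks are separated by non-critical letters (which, by Proposition \ref{critic} in the canonical filling, occupy at least one ``gap'' row of $\nu$), the rotations associated with different blocks act on disjoint portions of the tableau. Consequently, applying the above procedure successively to $B_1,B_2,\dots,B_m$ yields, at the end, a semistandard tableau $\widetilde T\in Tab(\nu,\alpha)$ with no negligible critical numbers (the canonical filling never has any, and Rotation 1 never creates any) and no $p$-effective critical numbers. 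By Theorem \ref{corl2}(b), $\widetilde T\in\mathrm{LR}_{\nu,R_\alpha^p}$, so $\varphi_{R_\alpha^p}(\widetilde T)$ witnesses $c_{R_\alpha^p}^{\nu}>0$, completing the proof.
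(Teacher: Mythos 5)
Your overall strategy coincides with the paper's: take the canonical filling, split $\mathcal{C}(T)$ into maximal consecutive blocks, and apply the Rotation machinery of Lemma \ref{p2} block by block. The gap is in how you dispose of the one obstacle you correctly single out, namely the interaction between blocks. You claim that ``the rotations associated with different blocks act on disjoint portions of the tableau,'' so that processing $B_t$ leaves the later blocks untouched. This is false. Procedure Rotation searches \emph{below} row $r_t$ for the smallest letters that are $\ge$ the rightmost entry of row $r_t$, and nothing prevents those letters from lying in the value range and the critical row of a later block. The paper's own example with $\alpha=(4,3^7)$, $\nu=(13,13,2)$ exhibits exactly this: processing $A_1=\{2,3,4\}$ pulls a $5$ and a $6$ out of row $2$, which destroys the criticality of $6$ and $7$ in $A_2=\{6,7,8\}$ and leaves only $\{8\}$ critical.

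The correct way to close this step (and what the paper does) is not disjointness but monotonicity: processing the first block may well alter later blocks, but it can only \emph{remove} critical numbers from them --- any letter borrowed from a lower row breaks the condition $\chi_{j}^i=\alpha_{j}$, $\chi_{j+1}^i=\alpha_{j+1}$ of Proposition \ref{critic} for the letters involved --- while the only new critical numbers it can create live in the row currently being processed, where the counting bound $k+k'+\cdots\le p_{j_1-1}$ from the proof of Lemma \ref{p2} guarantees termination. Hence after treating $A_1$ the remaining critical set is $A_2'\cup\cdots\cup A_r'$ with $A_i'\subseteq A_i$, and one iterates. As written, your disjointness claim is a false statement sitting at the crux of the reduction, so the proof does not stand without this repair; once repaired, your argument becomes the paper's.
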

\begin{proof}
Let $T\in Tab(\nu,\alpha)$ with the canonical filling, and  $\mathcal{C}(T)=\{j_1,\ldots,j_k\}$. Write
$$\mathcal{C}(T)=A_1\cup A_2\cup\cdots\cup A_r$$
the  set partition of  $\mathcal{C}(T)$ such that in each set $A_i$ all critical  numbers are consecutive, and if $a\in A_i$ and $b\in A_{i+q}$, for some $q>0$, then $a<b$ and $b-a\geq 2$.

Notice that in this case, the $\alpha_a$ letters $a$ must be all in some row $\ell$,  and the $\alpha_b$ letters $b$ must be all in some row $\ell'$ of $T$, with $\ell<\ell'$.

Apply the procedure described in  Lemma \ref{p2}  to the set of consecutive critical  numbers in $A_1$. This procedure may use some integers from $A_2\cup\cdots\cup A_r$ in its Rotation routines. If this is the case, then in the resulting tableau $T'$, some of the critical  numbers in $A_2\cup\cdots\cup A_r$ may no longer be critical numbers, since some of them may have been brought, by rotation, to a higher row of the tableau. Nevertheless, no new critical numbers are created by this process. So, in $T'$, the critical  numbers
can be partitioned as
$$A'_2\cup\cdots\cup A'_r,$$
where $A'_i\subseteq A_i$ for all $i=2,\ldots,r$.

Repeating the process, until no more critical points remain, we obtain a tableau $\widetilde{T}$ such that $\varphi_{R_\alpha}(\widetilde{T})$ is a skew SSYT.
\end{proof}

\begin{example}
Let $\nu=(13,13,2)$, $\alpha=(4,3^7)$ and $p=(8,7,6,5,4,3,2,1,0)$. The tableau
$$T=\young(1111222333444,5556667778889,99)\in Tab(\nu,\alpha)$$
has the critical points $2,3,4,6,7,8$, which can be partitioned as
$$A_1=\{2,3,4\}\cup A_2=\{6,7,8\},$$
according to the proof of the theorem above. We start by removing the critical  numbers in $A_1$:
$$T\rightarrow\young(1111223334445,2556667778889,99)\rightarrow\young(1111223344455,2356667778889,99)$$

$$\rightarrow\young(1111223344555,2346667778889,99)\rightarrow\young(1111223344556,2345667778889,99)=T'.$$

After the application of the procedure described in Lemma \ref{p2} to the critical  numbers in $A_1$, we get the tableau $T'$, whose only critical olga number is
$$\{8\}=A'_2\subset A_2.$$
So, we apply the procedure described in Lemma \ref{p2} again to the critical number in $A'_2$:
$$T'\rightarrow\young(1111223344556,2345667778899,89)=\widetilde{T}.$$
The resulting tableau $\widetilde{T}$ has no critical numbers and thus, the skew tableau
$$\varphi_{R_\alpha}(\widetilde{T})=\young(::::::::::::::::1111,::::::::::::::112,::::::::::::112,::::::::::112,::::::::112,::::::122,::::222,::223,223)$$
is a skew SSYT.
\end{example}

\section{Classification of monotone ribbons with full Schur support}
\label{sec:full}
Theorem \ref{ThmNec}, characterizing the positivity of  monotone ribbon LR coefficients, $c^\nu_{R^p_\alpha}>0$, by means of linear inequalities, may be rephrased in the language of the Schur support of $R^p_\alpha$.
Let $\nu\in[\alpha,(|\alpha|-p_1,p_1)]$, $\alpha$ a partition with parts $\ge 2$. Then
 \begin{equation}\text{$\nu\in[R_{\alpha}^p]$ if and only if  $\label{equ2}\nu_i\leq \sum_{q=i}^{\ell(\alpha)}\alpha_q-p_i$,\;\;$1\leq i\leq \ell(\alpha)$.}\end{equation}
By Remark \ref{re:ineqdom}, if $\alpha\preceq \nu$ one has
 $\displaystyle \nu_i\leq \sum_{q= i}^{\ell(\alpha)}\alpha_q$, for $1\le i\le \ell(\alpha)$. Hence, if $\nu\in[\alpha,(|\alpha|-p_1,p_1)]$ then $\nu_1\leq \sum_{q=1}^{\ell(\alpha)}\alpha_q-p_1$, and
 because one  has $p_{i}=0$, for $ \ell(p)<i\le\ell(\alpha)$,   the  inequalities \eqref{equ2}  are always satisfied for  $\ell(p)<i\le\ell(\alpha)$.
 Note that, when $\ell(p)\ge 2$,    $p_{i+1}-1\ge 0$,  $ i\in\{1,\dots, \ell(p)-1\}$.
Recall Definition \ref{witness} and
$\displaystyle \varrho_i=\sum_{q=i+1}^{\ell(\alpha)}\alpha_q-p_{i+1}+1>0$,   where $\varrho_i-1$ is the total number of columns in the last $\ell(\alpha)-i$ rows of $R^p_\alpha$, for $1\le i\le \ell(p)-1$.
\begin{remark} \label{re:varrho}Because the parts of  $\alpha$ are $\ge 2$, and $p_i=p_{i+1}$ or $p_i=p_{i+1}+1$,  $|\alpha|-p_1>\varrho_1> \cdots> \varrho_{\ell(p)-1}$.
\end{remark}
Thus, the negation of \eqref{equ2}  characterizes  the partitions in the interval $[\alpha,(|\alpha|-p_1,p_1)]$ which are not in the support of $R_{\alpha}^p$.
\begin{corollary}\label{char}
 Let $\nu\in[\alpha,(|\alpha|-p_1,p_1)]$ and  $\alpha$  a partition with parts $\ge 2$. Then,
 if   $\ell(p)=0,1$, $ [R_{\alpha}^p]=[\alpha,(|\alpha|-p_1,p_1)]$, and, if $\ell(p)\ge 2$, the following are equivalent

$(a)$  $\nu\notin [R_{\alpha}^p]$ if and only if there exists $ i\in\{1,\dots, \ell(p)-1\}$ such that
\begin{equation*}\nu_{i+1}\geq \sum_{q\geq i+1}\alpha_q-p_{i+1}+1\Leftrightarrow \nu_{i+1}\ge \varrho_i.\end{equation*}

 $(b)$ $\nu\notin [R_{\alpha}^p]$ if and only if, for some $i\in\{1,\dots, \ell(p)-1\}$, $\nu_{i+1}$ exceeds the number of columns in the last $\ell(\alpha)-i$ rows of $R_\alpha^p$.

 $(c)$ \cite[Lemma 4.8]{acm17} $\nu\notin [R_{\alpha}^p]$ if and only if, there exists $i\in\{1,\dots, \ell(p)-1\}$ such that after placing  $\alpha_j$ $j$'s, in row $j$ of $R_\alpha^p$, for $j=1,\dots,i$,  there is no space to place $\nu_{i+1}$ $i+1$'s in the remain $\ell(\alpha)-i$ rows of $R_\alpha^p$ without avoiding the violation of the column standard condition of the filling.

  $(d)$ $\nu\notin [R_{\alpha}^p]$, if, for every $T\in Tab(\nu,\alpha)$, there exists $i\ge 1$ such that $|\mathcal{D}(\widehat T)\cap\{\sum_{q\ge 1}^{j}\alpha_q: i+1\le j\le \ell(\alpha)\}|<p_{i+1}$.
\end{corollary}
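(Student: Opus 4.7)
The plan is to read off everything from Theorem \ref{ThmNec} together with the standing assumption $\nu\in[\alpha,(|\alpha|-p_1,p_1)]$, then translate the resulting inequalities into the three equivalent geometric/combinatorial forms.

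First I would dispose of the degenerate cases $\ell(p)\in\{0,1\}$. If $\ell(p)=0$ then $p=0$ and the right-hand side of \eqref{ineq} collapses to $\alpha\preceq\nu$ together with $\nu_i\le\sum_{q\ge i}\alpha_q$, which holds automatically by Remark \ref{re:ineqdom}. If $\ell(p)=1$ the sole nontrivial inequality is $\nu_1\le|\alpha|-p_1$, which is exactly the upper bound encoded in $\nu\in[\alpha,(|\alpha|-p_1,p_1)]$. In both cases every $\nu$ in the Schur interval belongs to $[R_\alpha^p]$, giving $[R_\alpha^p]=[\alpha,(|\alpha|-p_1,p_1)]$.

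For $\ell(p)\ge 2$, part (a) is the contrapositive of Theorem \ref{ThmNec} after a reindexing: the $i=1$ constraint is absorbed by membership in the Schur interval, so $c_{R_\alpha^p}^\nu=0$ is equivalent to the existence of some $i\in\{2,\dots,\ell(p)\}$ with $\nu_i\ge\sum_{q\ge i}\alpha_q-p_i+1$. Substituting $i\mapsto i+1$ and invoking the definition $\varrho_i=\sum_{q\ge i+1}\alpha_q-p_{i+1}+1$ yields $\nu_{i+1}\ge\varrho_i$ for some $i\in\{1,\dots,\ell(p)-1\}$. The equivalence (a)$\Leftrightarrow$(b) is then immediate, since by construction $\varrho_i-1$ counts exactly the columns of $R_\alpha^p$ contained in its last $\ell(\alpha)-i$ rows. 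For (a)$\Leftrightarrow$(c) I would argue combinatorially: in any attempted filling of $R_\alpha^p$ of content $\nu$, the letters $i+1$ must populate rows $i+1,\dots,\ell(\alpha)$, whose total column count is $\varrho_i-1$, and column-strictness on the overlapping columns of length two prevents two $(i{+}1)$'s from sharing such a column; hence $\nu_{i+1}\ge\varrho_i$ is the obstruction of \cite[Lemma 4.8]{acm17}, and it is the only obstruction.

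Finally, for (d) I would use the companion-tableau language via Corollary \ref{l3new}: a tableau $T\in Tab(\nu,\alpha)$ is a companion of an LR filling of $R_\alpha^p$ if and only if $\mathcal{D}(\widehat T)$ contains all $p_1$ sums $\sum_{q\le j}\alpha_q$ corresponding to overlapping row pairs of $R_\alpha^p$. Among these, exactly $p_{i+1}$ fall in the index range $i+1\le j\le\ell(\alpha)-1$, because $p_{i+1}$ is, by Definition \ref{overlap}, the number of length-two columns among the last $\ell(\alpha)-i$ rows. Thus the failure, for every $T$, of the lower bound $|\mathcal{D}(\widehat T)\cap\{\sum_{q\le j}\alpha_q:i+1\le j\le \ell(\alpha)\}|\ge p_{i+1}$ for some $i$ forbids any companion tableau, forcing $\nu\notin[R_\alpha^p]$. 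The main obstacle is not any single implication but rather keeping the three indexing conventions — the inequalities of Theorem \ref{ThmNec}, the geometric quantity $\varrho_i$, and the descent-set thresholds $p_{i+1}$ — aligned; once these are matched, each equivalence is essentially a definition-unfolding.
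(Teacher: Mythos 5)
Your proposal is correct and follows essentially the same route as the paper: the corollary is obtained by restating Theorem \ref{ThmNec} under the standing hypothesis $\nu\in[\alpha,(|\alpha|-p_1,p_1)]$, observing that the $i=1$ inequality and those with $i>\ell(p)$ are automatic (via Remark \ref{re:ineqdom}), negating, and reindexing through $\varrho_i$; the paper in fact presents exactly this paragraph as the entire justification. Your added unfoldings of parts $(c)$ and $(d)$ via the column count $\varrho_i-1$ and the companion-tableau/descent-set criterion are consistent with what the paper leaves implicit.
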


\begin{example}
Consider the partition $\alpha=(7,6,6,2,2,2,2)$ with the overlapping partition $p=(6,5,4,3,2,1,0)$. The partition $\nu=(8,7,6,6)$ is in the Schur interval $[\alpha,(27-6,6)]$ of $R_{\alpha}^p$, but not in its support since $\nu_4=6\geq \varrho_3=\alpha_4+\alpha_5+\alpha_6+\alpha_7-p_4+1=2+2+2+2-3+1=6$. Therefore, $[R_{\alpha}^p]\subsetneqq [\alpha,(27-6,6)].$
\end{example}

   Theorem \ref{Tchar} characterizes  the monotone ribbons $R_\alpha^p$ with full Schur support in terms of their partition skew shape $\alpha$ and  the overlapping partition $p$. In Definition \ref{witness}  a sequence of $\ell(p)-1$ witness vectors  $\tilde g^i=\{\tilde g^i_j\}_{j=1}^i=\{\left[ \varrho_i-\alpha_j\right]_+\}_{j=1}^i$  with its slack $p_{i+1}-1$, $1\le i\le \ell(p)-1$, is introduced to test the fullness of the Schur support  of $R^p_\alpha$.
Theorem \ref{Tchar} says that if, for some $1\le i\le \ell(p)-1$, the size of the  witness vector $\tilde g^i$    fits the slack $p_{i+1}-1$, that is $\sum_{j=1}^i\tilde g_j^i\le p_{i+1}-1$, then  $R_\alpha^p$ has not  full Schur support. In this case the vector $\tilde g^i$ witnesses that the Schur support $R_\alpha^p$  is not full in the sense that it can be used to exhibit a partition in the Schur interval that is not in $[R_\alpha^p]$. More precisely, $\displaystyle(\alpha_1 +\tilde g_1^i,\dots,
\alpha_i +\tilde g_i^i,\varrho_i,$ $p_{i+1}-1-|\tilde g^i| )^+$, with $\varrho_i-1$ the total number of columns in the last $\ell(\alpha)-i$ rows of $R^p_\alpha$,  is a partition of $|\alpha|$ in the Schur interval  of $R_\alpha^p$ but  not in the support of $R_\alpha^p$.

\subsection{Proof of Theorem \ref{Tchar} }
The {\em ``only if"} part.  Let $\nu \in[\alpha,(|\alpha|-p_1,p_1)]$
such that $\nu\notin[R_{\alpha}^p]$. Then, on one hand, since $\alpha\preceq \nu$, $\sum_{q=1}^{k}(\nu_q-\alpha_q)\ge 0$, $k=1,\dots, \ell(\alpha)$, and on the other hand,  since $\nu\notin[R_{\alpha}^p]$, by Corollary \ref{char}, $\ell(\alpha)\ge 3$, $\ell(p)\ge 2$, and there exists   $1\le i\le \ell(\alpha)-2$  with $p_{i+1}\ge 1$ such that
\begin{equation*}\nu_{i+1}\geq \varrho_i=\sum_{q\geq i+1}\alpha_q-p_{i+1}+1.\end{equation*}
We want to show that the $i$-witness vector $\widetilde{g}^i=(\widetilde{g}^i_1,\ldots,\widetilde{g}^i_i)$ of $R^p_\alpha$ fits its slack $p_{i+1}-1$.
It follows that  $0\le \sum_{q=1}^{i}(\nu_q-\alpha_q)\leq p_{i+1}-1$,  otherwise, we would have
$$\sum_{q=1}^{i+1}\nu_q=\sum_{q=1}^{i}\nu_q+\nu_{i+1}>\sum_{q=1}^{i}\alpha_q+p_{i+1}-1+
\sum_{q=i+1}^{\ell(\alpha)}\alpha_q-p_{i+1}+1=\sum_{q=1}^{\ell(\alpha)}\alpha_q,$$
contradicting the equality $|\alpha|=|\nu|$.

Let $U:=\{j\in\{2,\dots,i\}:\nu_j-\alpha_j<0\}$ (indeed $\nu_1\ge \alpha_1$) and $u:=\max U$. Put $u:=0$ if $U=\emptyset$.

{\em Claim}: There exist $\mu_j\ge \alpha_j$, $j=1,\dots, u$, such that
 \begin{equation}\mu_1\ge\dots \ge \mu_{u-1}\ge \alpha_{u-1}\ge \mu_u=\alpha_{u}>\nu_u\ge \nu_{u+1}\ge\cdots\ge \nu_i\ge \nu_{i+1},\;\text{and}
\end{equation}
 \begin{equation}\label{pp}\sum_{j=1}^u (\mu_j-\alpha_{j})= \sum_{j=1}^{u}(\nu_j-\alpha_{j})\ge 0.\end{equation}
In these conditions, defining $g_j:=\mu_j-\alpha_j\ge 0$, $j=1,\dots,u$, and $g_j:=\nu_j-\alpha_j\ge 0$, $j=u+1,\dots,i$, one has
$\sum_{j=1}^i g_j=\sum_{j=1}^{i}(\nu_j-\alpha_{j})\le p_{i+1}-1$,
$$\alpha_j+g_j=\mu_j\ge \alpha_u>\nu_u\ge \nu_{i+1}\ge\sum_{q\ge i+1} \alpha_q-p_{i+1}+1=\varrho_i, \;j=1,\dots,u,$$
and
$$\alpha_j+g_j=\nu_j\ge \nu_i\ge \nu_{i+1}\ge\sum_{q\ge i+1} \alpha_q-p_{i+1}+1=\varrho_i, \;j=u+1,\dots,i,$$
so that $g_j\geq \varrho_i-\alpha_j$ for $j=1,\ldots,i$.
It follows that the witness vector $\widetilde{g}^i=(\widetilde{g}^i_1,\ldots,\widetilde{g}^i_i)$, with $\widetilde{g}^i_j=\varrho_i-\alpha_j$ for $j=1,\ldots,i$, fits its slack:
$$|\widetilde{g}^i|=\sum_{j=1}^i\widetilde{g}^i_j\leq\sum_{j=1}^ig_j\leq p_{i+1}-1.$$

{{\em Proof of the Claim}: We prove the claim by double induction on $|U|\ge 0$ and  $i\ge 2$.

For $|U|=0$ there is nothing to prove whatever is $i\ge 2$. Let $|U|\ge 1$.
For $i=2$, one has $\nu_1-\alpha_{1}\ge 0$ and $u=2$ with $\nu_2<\alpha_2$. Since $(\nu_1-\alpha_{1}) +(\nu_2-\alpha_{2})\ge 0$ and $\nu_2=\alpha_{2}-\epsilon_2$, for some $\epsilon_2>0$, we may write
  $$(\nu_1-\alpha_{1}) +(\nu_2-\alpha_{2})=[(\nu_1-\epsilon_2)-\alpha_{1}]+(\alpha_{2}-\alpha_{2})=(\nu_1-\epsilon_2)-\alpha_{1}\ge 0.$$
  Thus $\mu_1:=\nu_1-\epsilon_2\ge \alpha_{1}\ge \mu_2:=\alpha_{2}>\nu_2\ge \nu_3$.

  Let $i=m+1\ge 3$, and $u\in\{2,\dots,m+1\}$ where $\nu_u=\alpha_{u}-\epsilon_u$, for some $\epsilon_u>0$, and $\nu_v-\alpha_v\ge 0$, $u<v\le m+1$. We distinguish two situations:

$(a)$ $u=2$: $\nu_1>\alpha_1$, $\nu_2=\alpha_2-\epsilon$, for some $\epsilon>0$, and $\nu_j\ge \alpha_j$, for $3\le j\le i$.
We have $\alpha\preceq \nu$ and we may write
$$(\nu_1-\alpha_1)+(\nu_2-\alpha_2)=(\mu_1-\alpha_{1})+(\alpha_2-\alpha_2)=(\mu_1-\alpha_{1})+(\mu_2-\alpha_2)\ge 0,$$ where $\mu_1:=\nu_1-\epsilon\ge \alpha_1, \;\mu_2:=\alpha_2$. Also $\mu_1\ge \alpha_1\ge\mu_2=\alpha_2>\nu_2\ge\nu_3\ge\cdots\ge\nu_i\ge\mu_{i+1}.$

$(b)$ $u>2$: $\nu=\alpha_u-\epsilon_u$ for some $\epsilon_u>0$ and $\nu_j\ge\alpha_j$, $u<j\le i$.
One has $\alpha\preceq \nu$, henceforth
  $$\sum_{j=1}^{u}(\nu_j-\alpha_{j})=\left[\left(\sum_{j=1}^{u-1}(\nu_j-\alpha_{j})\right)-\epsilon_u\right]+ (\alpha_{u}-\alpha_{u})\ge 0.$$
Thus $\mu_u:=\alpha_{u}>\nu_u\ge\nu_{u+1}\ge\dots\ge\nu_i\ge \nu_{i+1}$ and $\sum_{j=1}^{u-1}(\nu_j-\alpha_{j})\ge\epsilon_u>0$.

 Since $2\le u-1\le i-1\le m$, by induction, there exist $\nu'_1\ge\dots\ge\nu'_{u-1}$ with $\nu'_j\ge \alpha_{j}$, $j=1,\dots,u-1$, such that
$$\sum_{j=1}^{u-1}(\nu_j-\alpha_{j})=\sum_{j=1}^{u-1}(\nu'_j-\alpha_{j})\ge \epsilon_u.$$
 Indeed, one has  $\nu'_j=\alpha_{j}+\epsilon_j$, with $\epsilon_j\ge 0$, $j=1,\dots,u-1$, such that $\sum_{j=1}^{u-1}\epsilon_j\ge \epsilon_u$.
Define recursively the non negative integers $$\delta_j:=min(\epsilon_j,\epsilon_u-\underset{j+1\le q\le u-1}{\sum}\delta_q), \quad\text{for}\quad j=u-1,\dots,1,$$
and put $\mu_j:=\nu'_j-\delta_j=\alpha_{j}+(\epsilon_j-\delta_j)\ge 0, \quad\text{for}\quad j=u-1,\dots,1.$
Therefore, there exists $1\le u_0<u$ such that $0<\delta_{u_0}\le \epsilon_{u_0}$ and
$$\mu_j=\left\{ \begin{array}{rccl}
\alpha_{j}& & & \quad u_0< j<u \\
\alpha_{u_0}&+&(\epsilon_{u_0}-\delta_{u_0})& \\
\nu'_j & & &\quad
 1\le j<u_0.
\end{array}\right.$$
Hence, $$\mu_1\ge\dots\ge\mu_{u_0+1}\ge\nu'_{{u_0}}>\mu_0\ge \alpha_{u_0}\ge \mu_{u_0+1}= \alpha_{u_0+1}\ge \dots\ge \mu_{u-1}=\alpha_{u-1}\ge \mu_u=\alpha_{u},$$ as required.}

\bigskip

The {\em ``if"} part. Let $\varrho_i=\sum_{q=i+1}^{\ell(\alpha)}\alpha_q-p_{i+1}+1>0$, $1\leq i\leq \ell(p)-1$.
Suppose now that there is an i-witness vector $\tilde g^i=(\tilde g_1^i,\dots,\tilde g_i^i)$  of $R_\alpha^p$ for some
$1\leq i\leq \ell(p)-1$, with
$\displaystyle \tilde g_j^i:=\left[ \varrho_i-\alpha_j\right]_+,\;j=1,\dots, i$, such that $|\tilde g^i|\leq p_{i+1}-1$.
Let $\nu=(\nu_1,\ldots,\nu_{i+1},\nu_{i+2})$ be the partition of $|\alpha|$ formed by the rearrangement of the composition
 \begin{equation}\label{eq_partition}
 (\alpha_1 +\tilde g_1^i,\dots,
\alpha_i +\tilde g_i^i,\varrho_i,p_{i+1}-1- |\tilde g^i|),
\end{equation}
where $\alpha_1 +\tilde g_1^i,\dots,
\alpha_i +\tilde g_i^i\ge \nu_{i+1}=\varrho_i\ge \nu_{i+2}=p_{i+1}-1- |\tilde g^i|.$

 We will show that $\nu$ is a partition in the Schur interval of the ribbon $R_{\alpha}^p$ that is not in its support.
Indeed, the inequality $|\tilde g^i|\leq p_{i+1}-1$ shows that all entries in \eqref{eq_partition} are non negative, and $\displaystyle \sum_{q=1}^{i+2}\nu_q=\sum_{q= 1}^{\ell(\alpha)}\alpha_q=|\alpha|$. Thus, $\nu$ is well defined and is a partition of $|\alpha|$.

 Recall that $\displaystyle \varrho_i-1=\sum_{q\geq i+1}\alpha_q-p_{i+1}$ is the total number of columns of $\displaystyle R_{\alpha}^p\setminus\left(\cup_{q=1}^i<\alpha_q>\right)$ and that
 $p_{i+1}$ is the number of columns of length two in this  same ribbon. Therefore, we have $\varrho_i> p_{i+1}-1- |\tilde g^i|$. Moreover,
 for each $1\leq j\leq i$, we have
 \begin{equation}\label{eq_desig}
 \alpha_j+\tilde{g}^i_j=\begin{cases}\varrho_i,\text{ if }\varrho_i>\alpha_j\\ \alpha_j,\text{ if }\varrho_i\leq\alpha_j\end{cases}.
 \end{equation}
 It follows that $\alpha_j+\tilde{g}^i_j\geq \varrho_i$. This means that the last two entries of $\nu$ are $\nu_{i+1}=\varrho_i$ and $\nu_{i+2}=p_{i+1}-1- |\tilde g^i|$.
 In particular, it follows from Corollary \ref{char}, $(b)$,  that $\nu$ is not in the Schur support of $R_{\alpha}^p$.

 It remains to prove that $\nu$ is a partition in the Schur interval $[\alpha,(|\alpha|-p_1,p_1)]$. We start by showing that $\alpha\preceq \nu$.
From \eqref{eq_desig}, we find that for each $1\leq k\leq i$,
 $$\sum_{j=1}^k\nu_j=\sum_{j=1}^k(\alpha_j+\tilde{g}^i_j)\geq\sum_{j=1}^k\alpha_j$$
 and since $\varrho_i\geq \alpha_{i+1}$,
 $\sum_{j=1}^{i+1}\nu_j=\sum_{j=1}^{i}(\alpha_j+\tilde{g}^i_j)+\varrho_i\geq\sum_{j=1}^{i}\alpha_j+\varrho_i\geq
 \sum_{j=1}^{i+1}\alpha_j.$
 Finally, since $\nu$ is a partition of $|\alpha|$, we get  $\alpha\preceq\nu$.
 To prove that we also have $\nu\preceq (|\alpha|-p_1,p_1)$, notice that by
 \eqref{eq_desig} and Remark \ref{re:varrho}, $\nu_1$ is either equal to $\varrho_1$ or to $\alpha_1$,   and $\varrho_1\leq |\alpha|-p_1$. Therefore, we have $\nu_1\leq|\alpha|-p_1$.
 Clearly, $\nu_1+\nu_2\leq |\alpha|$, from which it follows that
$\nu\preceq (|\alpha|-p_1,p_1)$.
$\Box$

\begin{remark} \label{re:equiv1} Let $\alpha$ be a partition with parts $\ge 2$ and  overlapping partition $p$ with  $\ell(p)\ge 2$. Recall Definition \ref{witness},
$\displaystyle \varrho_i=1+\sum_{q=i+1}^{\ell(\alpha)}\alpha_q-p_{i+1}>0$, and $\tilde g^i=\{\tilde g^i_j\}_{j=1}^i=\{\left[ \varrho_i-\alpha_j\right]_+\}_{j=1}^i$, for $1\le i\le \ell(p)-1$. Observe that the following are equivalent:

$(a)$ for some $1\le i\le \ell(p)-1$,  the  size of the $i$-witness vector $\tilde g^i$  fits its slack,   that is,
\begin{equation}\label{lineq}|\tilde g^i|=\sum_{j=1}^i\left[ \varrho_i-\alpha_j\right]_+\le p_{i+1}-1.\end{equation}

$(b)$ for some $1\le i\le \ell(p)-1$, there exist  integers
$g_1,\dots,g_i\ge 0$ with $\sum_{j=1}^i g_j\le p_{i+1}-1$,  such that
\begin{equation}\label{lineq+}\alpha_j+g_j\ge 1+\sum_{q=i+1}^{\ell(\alpha)}\alpha_q-p_{i+1}\Leftrightarrow g_j\ge \varrho_i-\alpha_j,\quad j=1,\dots, i.\end{equation}

Indeed, \eqref{lineq+}  says that, for $1\le i\le \ell(p)-1$, other "witness vectors" $g=\{ g_j\}_{j=1}^i$ can be found depending on how big is the slack $p_{i+1}-1$. Simultaneously \eqref{lineq+} tells that the selected witness $\tilde g^i$ in Definition \ref{witness} is  entrywise the smallest,
$$\tilde g^i_j\le g_j,\; j=1,\dots,i, \Rightarrow |\tilde g^i|\ge |g|.$$
If our selected witness $\tilde g^i$ does not fit (is over the size of) its slack, no other (any other) choice for the witness vector will fit (oversize) it.

In the conditions of $(b)$, it can be shown that
$(\alpha_1 +g_1,\dots,
\alpha_i +g_i,\varrho_i)^+$ with $\sum_{j=1}^i g_j=$ $ p_{i+1}-1$ ($g$ has the possible  biggest size)  is a partition of $|\alpha|$ in the Schur interval  of $R_\alpha^p$ but  not in the support of $R_\alpha^p$.
\end{remark}

\begin{example} $(a)$ Consider the same example as before,  $\alpha=(7,6,6,2,2,2,2)$ and the ribbon $R_{\alpha}^p$ with $p_1=6$. Applying  Theorem \ref{Tchar} with $i=3$, one has $p_{i+1}=3$,  $\varrho_3=6$ and the $3$-witness vector  $\tilde g^3=(\tilde g^3_1, \tilde g_2^3, \tilde g_3^3)=(0,0,0)$, satisfy $|\tilde g^3|\leq p_{i+1}+1=4$. Therefore, the support $[R_{\alpha}^p]$ is not the full Schur interval. The partition
$$(7-\tilde g^3_1,6-\tilde g^3_2,6-\tilde g^3_3,\varrho_3,p_4-1-|\tilde g^3|)=(7,6,6,6,2)$$
is in the Schur interval $[\alpha,(27-6,6)]$ but not in the support $[R_{\alpha}^p]$.

$(b)$  Furthermore, considering $g_1+g_2+g_3=2=p_4-1$, with $g_i\ge 0$, $i=1,2,3$, the partitions
 $\nu_1=(6+2,7,6,2+2+2+2-2),$ $\nu_2=(6+1,7,6+1,2+2+2+2-2)$ and $\nu_3=(7+2,6,6,2+2+2+2-2)$ are in the interval $[\alpha,(27-6,6)]$ but not in the support of  $R_{\alpha}^p$.
\end{example}

\subsection{Proof of Remark \ref{aftertheor} and Corollary \ref{cor:full}}. Theorem \ref{corf} is logically equivalent  to Theorem \ref{Tchar} and says that if every  $i$-witness $\tilde g^i$ vector of $R^p_\alpha$, for $i=1,\dots,\ell(p)-1$,  is  oversized, with respect to its slack $p_{i+1}-1$, then  $R^p_\alpha$ has full Schur support.
In particular,
$R_\alpha^p$ has full support only if $\alpha_i< \varrho_i$ for every $1\le i\le \ell(p)-1$. In fact, if, for some $k\in \{1,\dots, \ell(p)-1\}$, $\alpha_k\ge \varrho_k$, then $\alpha_1\ge\cdots\ge\alpha_k\ge \varrho_k$ and
$|\tilde g^k|=\sum_{j=1}^k\left[ \varrho_k-\alpha_j\right]_+=0\le p_{k+1}-1.$
This implies that $(\alpha_1,\dots,\alpha_k,\varrho_k,p_{k+1}-1)\in[\alpha,(|\alpha|-p_1,p_1)]
$ is not in $[R_\alpha^p]$ which is absurd.

$(a)$  When  $\ell(p)=2$, one has $p=(2,1,0^{\ell(\alpha)-2})$, and $[R_\alpha^p]=[\alpha,(|\alpha|-2,2)]$ if and only if
 $\alpha_1<\varrho_1\Leftrightarrow \alpha_1<\sum_{q=2}^{\ell(\alpha)}\alpha_q$. In fact, if $\ell(p)=2$, \eqref{fullineq2}
means
$$ \left[ \varrho_1-\alpha_1\right]_+\ge 1\Leftrightarrow \varrho_1-\alpha_1>0\Leftrightarrow \varrho_1>\alpha_1\Leftrightarrow \alpha_1<1+\sum_{q=2}^{\ell(\alpha)}\alpha_q-1=\sum_{q=2}^{\ell(\alpha)}\alpha_q.$$
$(b)$ When  $\ell(p)=3$, one has $p=(3,2,1,0^{\ell(\alpha)-3})$, and $[R_\alpha^p]=[\alpha,(|\alpha|-3,3)]$ if and only if
$\alpha_1<\sum_{q=2}^{\ell(\alpha)}\alpha_q-2\;\;\text{and}\;\;
\alpha_2<\sum_{q=3}^{\ell(\alpha)}\alpha_q$. In fact, if $\ell(p)=3$, \eqref{fullineq3}
means $$  \varrho_1-\alpha_1\ge 2\Leftrightarrow \varrho_1>\alpha_1+1\Leftrightarrow 1+\sum_{q=2}^{\ell(\alpha)}\alpha_q-2>\alpha_1+1\Leftrightarrow\sum_{q=2}^{\ell(\alpha)}\alpha_q-2>\alpha_1,$$
$$  [\varrho_2-\alpha_1]_++[\varrho_2-\alpha_2]_+\ge 1
\Leftrightarrow \varrho_2-\alpha_2\ge 1 \Leftrightarrow \varrho_2>\alpha_2\Leftrightarrow \alpha_2<1+\sum_{q=3}^{\ell(\alpha)}\alpha_q-1=\sum_{q=3}^{\ell(\alpha)}\alpha_q.$$
$\Box$

\begin{example}\label{exDR}

Let $\alpha=(4,3,2,2)$  with $p=(3,2,1,0)$. We  use the characterization given by the Theorem \ref{corf} $(b)$ to prove that $R_{\alpha}^p$ has full support $[\alpha,(8,3)]$. Since $\ell(\alpha)=4$ and  $\ell(p)=3$, we have two inequalities to check:
$$\alpha_2+\alpha_3+\alpha_4-2> \alpha_1\Leftrightarrow7-2> 4,
\;\;\alpha_2<\alpha_3+\alpha_4\Leftrightarrow 3<4.$$
\end{example}


\section{Connected ribbons with full equivalence class and full Schur support}
\label{sec:fullequiv}
Building on \cite{Mn}, M. Gaetz, W. Hardt and S. Sridhar have introduced in \cite{fullequiv} the family of connected ribbons with full equivalence class.

\begin{definition} \cite[Definition 7]{fullequiv}  Let $\alpha$ be a partition with parts $\geq 2$ and $\ell(\alpha)\geq 3$. The connected ribbon $R_{\alpha}$ is said to have {\em full equivalence class} if $[R_{\alpha}]=[R_{\beta}]$, for any rearrangement $\beta$ of the entries of $\alpha$.
\end{definition}
\begin{definition}\cite{fullequiv}
Three integers $x\leq y\leq z$ are said to satisfy the {\em strict triangle inequality} if $z<x+y$. In this case, the multiset $\{x,y,z\}$ is said to satisfy the strict triangle inequality.
\end{definition}
The set of connected ribbons with full equivalence class have partitions as representatives.
 For monotone connected ribbons, the inequality \eqref{N}, in Theorem \ref{th:1.2}, \cite[Theorem II.1]{fullequiv}, giving a necessary condition for full equivalence class, is equivalent to inequality \eqref{fullineq}, in Theorem \ref{corf}, characterizing the full Schur support.

\noindent{\bf Proof of Lemma \ref{lem:equiv} } Let $ j\in\{1,\dots, \ell(\alpha)-2\}$ and $\displaystyle N_j:=max\{k:\sum_{\begin{smallmatrix}1\le i\le j\\\alpha_i<k\end{smallmatrix}}(k-\alpha_i)\le \ell(\alpha)-j-2\}.$ From the definition of $N_j$, one has  $\displaystyle\sum_{\begin{smallmatrix}1\le i\le j\\\alpha_i<N_j\end{smallmatrix}}(N_j-\alpha_i)\le \ell(\alpha)-j-2$. Then
$\displaystyle N_j<\varrho_j\Leftrightarrow\sum_{\begin{smallmatrix}1\le i\le j\\\alpha_i<\varrho_j\end{smallmatrix}} (\varrho_j-\alpha_i)\ge \ell(\alpha)-j-1.$
$\Box$

 \noindent{\bf Proof of Theorem \ref{fullequivschur}.}
 Because $R_\alpha$ is connected, $p=(\ell(\alpha)-1,\dots,2,1,0)$ and, in Definition \ref{witness}, $\varrho_j=\sum_{q=j+1}^{\ell(\alpha)}\alpha_q-(\ell(\alpha)-j-2)$, for $j\in\{1,\dots,\ell(\alpha)-2\}$. Suppose that $R_\alpha$ does not have full Schur support. Then  Theorem \ref{corf} says that for some $t\in\{1,\dots,\ell(\alpha)-2\} $,
 \begin{equation}\label{equivschur}\sum_{\begin{smallmatrix}1\le i\le t\end{smallmatrix}}[\varrho_t-\alpha_i]_+=\sum_{\begin{smallmatrix}1\le i\le t\\\alpha_i<\varrho_t\end{smallmatrix}}(\varrho_t-\alpha_i)\le \ell(\alpha)-t-2.\end{equation}
 Inequality \eqref{equivschur} implies in the definition of $N_t$, \eqref{N}, that $N_t\ge \varrho_t$  with $t\in\{1,\dots,\ell(\alpha)-2\} $. Henceforth, by Theorem  \ref{th:1.2}, \cite[Theorem 1.2]{fullequiv}, one concludes that $\alpha$ does not have  full equivalence class. When $\ell(\alpha)=3$, by Theorem \ref{Tchar}, $R_\alpha$ has full support $[\alpha,(|\alpha|-2,2)]$ if and only if $\alpha_1<\alpha_2+\alpha_3$ (strict triangle inequality). Theorem 3.4, in \cite{fullequiv2}, also shows that $R_\alpha$ has full equivalence class if and only if $\alpha_1<\alpha_2+\alpha_3$. When $\ell(\alpha)=4$, by Theorem \ref{Tchar} $(b)$, $R_\alpha$ has full support $[\alpha,(|\alpha|-3,3)]$ if and only if  \eqref{fullineq3} are satisfied. Theorem 3.6, in \cite{fullequiv2}, also shows that $R_\alpha$ has full equivalence class if and only if \eqref{fullineq3} are satisfied.
 $\Box$

 Next  theorem gives  a sufficient condition for a monotone connected ribbon to have full equivalence class \cite[Corollary1.4]{fullequiv} which in turn, thanks to  Theorem \ref{fullequivschur},   also gives  a sufficient condition for monotone connected ribbons to have  full Schur support.

\begin{theorem}\label{triangle}
Let $\beta=(\beta_1,\ldots,\beta_{\ell(\beta)})$ be a composition with parts $\geq 2$  and $\ell(\beta)\geq 3$. If all
 3-multisets contained in  $\{\beta_1,\dots,\beta_{\ell(\beta)}\}$ satisfy the strict triangle inequality then the connected ribbon $R_{\beta}$ has

$(a)$ \cite[Corollary 1.4]{fullequiv}  full equivalence class; and

$(b)$  full Schur support $[\beta^+,(|\beta|-\ell(\beta)+1,\ell(\beta)-1)]$.
\end{theorem}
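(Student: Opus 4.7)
My plan is to handle the two parts independently. Part (a) is nothing more than an invocation of \cite[Corollary 1.4]{fullequiv}, so I will cite it without further argument. For part (b), I would then rely on the bridge already established in this paper: Theorem \ref{fullequivschur} states that every monotone connected ribbon with full equivalence class also has full Schur support. Applied to $\beta^+$, which still satisfies the strict-triangle hypothesis and hence has full equivalence class by (a), it yields $[R_{\beta^+}]=[\beta^+,(|\beta|-\ell(\beta)+1,\ell(\beta)-1)]$. Since full equivalence class of $R_\beta$ gives $[R_\beta]=[R_{\beta^+}]$, and since both connected ribbons have the same row-length multiset (hence the same Schur interval), the same identity descends to $[R_\beta]$.

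For a self-contained verification of (b), I would instead check inequality \eqref{fullineq} of Theorem \ref{corf} directly on $R_{\beta^+}$, whose overlapping partition is $p=(\ell(\beta)-1,\dots,2,1,0)$. Fix $1\le i\le \ell(p)-1=\ell(\beta)-2$. Using that all parts are at least $2$,
\[
\varrho_i\ =\ \sum_{q=i+1}^{\ell(\beta)}\beta^+_q-\ell(\beta)+i+2\ \ge\ \beta^+_{i+1}+\beta^+_{i+2}+\ell(\beta)-i-2.
\]
For every $j\in\{1,\dots,i\}$ the $3$-multiset $\{\beta^+_j,\beta^+_{i+1},\beta^+_{i+2}\}$ has largest element $\beta^+_j$, so the strict triangle hypothesis yields $\beta^+_j\le \beta^+_{i+1}+\beta^+_{i+2}-1$, whence
\[
\varrho_i-\beta^+_j\ \ge\ \ell(\beta)-i-1\ =\ p_{i+1}.
\]
In particular $[\varrho_i-\beta^+_1]_+\ge p_{i+1}$, which by itself forces $\sum_{j=1}^{i}[\varrho_i-\beta^+_j]_+\ge p_{i+1}$, i.e.\ \eqref{fullineq}. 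Theorem \ref{corf} then concludes the monotone case.

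The only subtle point is conceptual rather than technical: Theorem \ref{corf} is stated for monotone ribbons whereas (b) must hold for an arbitrary composition $\beta$. This gap is closed by part (a), which gives $[R_\beta]=[R_{\beta^+}]$; beyond the elementary triangle estimate above I do not anticipate further obstacles.
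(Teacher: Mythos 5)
Your primary route is exactly the paper's: part $(a)$ is the cited \cite[Corollary 1.4]{fullequiv}, and part $(b)$ follows by feeding $(a)$ into Theorem \ref{fullequivschur} for the monotone representative $\beta^+$ and then transporting the support identity to $R_\beta$ via the full-equivalence-class property $[R_\beta]=[R_{\beta^+}]$ (the Schur intervals agree since the row multisets do). This is precisely the deduction the paper makes in the sentence preceding the theorem, so on that score your proof matches.

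Your supplementary self-contained verification of the monotone case is a genuine, and welcome, addition that the paper does not carry out explicitly: you check the witness-vector inequalities \eqref{fullineq} of Theorem \ref{corf} directly. The computation is correct: with $p_{i+1}=\ell(\beta)-i-1$ one has $\varrho_i=\sum_{q=i+1}^{\ell(\beta)}\beta^+_q-\ell(\beta)+i+2\ \ge\ \beta^+_{i+1}+\beta^+_{i+2}+\ell(\beta)-i-2$ because the $\ell(\beta)-i-2$ omitted parts are each at least $2$, and the strict triangle inequality $\beta^+_j\le\beta^+_{i+1}+\beta^+_{i+2}-1$ then gives $\left[\varrho_i-\beta^+_j\right]_+\ge p_{i+1}$ already for $j=1$. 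What this buys is a proof of $(b)$ for $R_{\beta^+}$ that depends only on the internal classification (Theorem \ref{corf}) and not on the external reference; note, however, that the external citation in $(a)$ remains indispensable for extending $(b)$ from $\beta^+$ to an arbitrary rearrangement $\beta$, since the paper's Theorem \ref{ineqcomposition} gives only a necessary condition for non-monotone ribbons and its sufficiency is conjectural. You flag this correctly.
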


The strict triangle inequality condition given by the previous theorem is sufficient for a connected ribbon to have full support, but it is not necessary. For instance, not all 3-subsets of the partition $\alpha=(4,3,2,2)$  satisfy the strict triangular inequality $(4=2+2)$, but as we have seen in Example \ref{exDR}, the connected ribbon $R_{\alpha}^p$ has full support.
Nevertheless, for partitions $\alpha$ with length $3$ the connected ribbon $R_{\alpha}^p$ has full support (full support) if and only if $\alpha$ satisfy the strict triangular inequality \eqref{fullineq2}.

 Next statement classifies  arbitrary compositions with length $3$ with respect to the full support where we may verify that for non monotone compositions the strict triangular inequality is not a necessary condition. This means that the full Schur support classification  for non monotone compositions  and  for partitions is not the same.

\begin{corollary} \label{cor:length3}
Let $\beta$ be a composition of length 3 with each part $\geq 2$. Then, the connected ribbon $R_{\beta}$ has full support except when $\beta=(\beta_1^+,\beta_2,\beta_3)$ or $\beta=(\beta_2,\beta_3,\beta_1^+)$ with $\beta_1^+\ge \beta_2+\beta_3$, in which cases, the partition $\nu=(\beta_1^+, \beta_2+\beta_3)$ is in the Schur interval  but  not in the support of $R_{\beta}$.
\end{corollary}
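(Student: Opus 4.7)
The plan is to normalize and reduce by $\pi$-rotation symmetry to three representative cases. Write $a:=\beta_1^+$ and let $\{b,c\}=\{\beta_2,\beta_3\}$ as a multiset with $a\ge b\ge c\ge 2$, and set $n=a+b+c$. The Schur interval of $R_\beta$ depends only on $\beta^+=(a,b,c)$ and equals $[(a,b,c),(n-2,2)]$; whenever $a\ge b+c$ the partition $(a,b+c)$ indeed lies in this interval. Since $s_{R_\beta}=s_{R_{\beta^{\mathrm{rev}}}}$ implies $[R_\beta]=[R_{\beta^{\mathrm{rev}}}]$, the six rearrangements of $\{a,b,c\}$ reduce to three classes up to reversal: the \emph{monotone} $(a,b,c)$, the \emph{valley} $(a,c,b)$ and the \emph{mountain} $(b,a,c)$. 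The compositions excluded in the statement are exactly those in the monotone or valley class.

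For the monotone class, Corollary \ref{cor:full}$(a)$ (applied to $\alpha=(a,b,c)$ with $p=(2,1,0)$) yields full support iff $a<b+c$, and Theorem \ref{Tchar} exhibits $(a,b+c)$ as the obstruction when $a\ge b+c$ (its sole $1$-witness vector has size $[b+c-a]_+=0\le p_2-1=0$). For the valley class $\beta=(a,c,b)$ with $\mathcal S(\beta)=\{a,a+c\}$: if $a<b+c$, then by \cite[Theorem~3.4]{fullequiv2} (equivalently, Theorem \ref{fullequivschur} combined with Corollary \ref{cor:full}$(a)$) the partition $(a,b,c)$ has full equivalence class, so $[R_{(a,c,b)}]=[R_{(a,b,c)}]$ is the full interval; if $a\ge b+c$, Theorem \ref{th:gessel} reduces the positivity of $c^{(a,b+c)}_{R_{(a,c,b)}}$ to exhibiting a SYT of shape $(a,b+c)$ with descent set $\{a,a+c\}$, and I show this is impossible. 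In any such two-row SYT the presence of $1$ in row~$1$ together with the absence of descents at $1,\dots,a-1$ forces $\{1,\dots,a\}\subseteq\text{row~1}$, while the descent at $a+c$ forces $a+c\in\text{row~1}$ as well; these $a+1$ distinct entries cannot fit in a row of length $a$, the required contradiction.

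For the mountain class $\beta=(b,a,c)$ with $\mathcal S(\beta)=\{b,a+b\}$, I construct, for each $\nu\in[(a,b,c),(n-2,2)]$, a SSYT $T\in \mathrm{Tab}(\nu,(b,a,c))$ with $\mathcal D(T)=\{1,2\}$; by Proposition \ref{prop:desc} and Corollary \ref{l3new} this yields $c^{\nu}_{R_\beta}>0$. Place all $b$ ones in row~$1$ (so $\mu=(b)$; valid since $\nu_1\ge a\ge b$), take $\lambda_3=0$, and set
\[
\lambda_2 \;:=\; \max\{\nu_3,\;1,\;a+b-\nu_1\}, \qquad \lambda_1 \;:=\; a+b-\lambda_2.
\]
The Schur-interval inequalities $\nu_1\ge a$, $\nu_1+\nu_2\ge a+b$, $\nu_2\ge 2$ (from $\nu_1\le n-2$) and $\nu_3\le c\le b$ imply $\mu\subseteq\lambda\subseteq\nu$ with both $\lambda/\mu$ and $\nu/\lambda$ horizontal strips, so $T$ is a valid SSYT. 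The clause $\lambda_2\ge 1$ puts a $2$ in row~$2$ below a $1$ in row~$1$, giving $1\in\mathcal D(T)$. For $2\in\mathcal D(T)$: if $\nu_3\ge 1$, a $3$ in row~$3$ lies below a $2$ in row~$2$; if $\nu_3=0$, then $\nu_1+\nu_2=n>2b$ (using $a+c\ge b+2$) combined with $\nu_1\ge\nu_2$ forces $\nu_1>b$, whence $\lambda_2<a$ so $\lambda_1>b$ (placing a $2$ in row~$1$) and $\nu_2>\lambda_2$ since $n>a+b$ (placing a $3$ in row~$2$). The main obstacle is this last descent verification in the edge case $\nu_3=0$, where the three clauses of the maximum defining $\lambda_2$ must be balanced against the interval and partition constraints to simultaneously place a $2$ in row~$1$ and a $3$ in row~$2$.
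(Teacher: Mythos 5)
Your proposal is correct, and it reproduces the paper's skeleton — reduce by $\pi$-rotation to three arrangements according to the position of the largest part, and exhibit $(\beta_1^+,\beta_2+\beta_3)$ as the obstructing partition in the excluded cases — but the tactics in each case are genuinely different. For $\beta_1^+<\beta_2+\beta_3$ the paper invokes Theorem \ref{triangle} (strict triangle inequality) once for all six arrangements, whereas you route through Corollary \ref{cor:full}$(a)$ together with the full-equivalence-class statement of Theorem \ref{fullequivschur}; both are legitimate, the paper's being the more direct citation. For the excluded arrangement with the two smaller parts out of order and $\beta_1^+\ge\beta_2+\beta_3$, the paper argues on the ribbon itself: a Yamanouchi filling of content $(\beta_1^+,\beta_2+\beta_3)$ forces the top row to consist entirely of $1$'s and the two overlapping lower rows to consist entirely of $2$'s, violating column-strictness; you instead apply Theorem \ref{th:gessel} and show no SYT of the two-row shape $(\beta_1^+,\beta_2+\beta_3)$ can have descent set $\{a,a+c\}$ — a clean, equivalent argument. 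For the mountain arrangement the paper starts from the canonical filling and performs a single swap of a $2$ and a $3$, while you give a uniform closed-form chain $\mu=(b)\subseteq\lambda\subseteq\nu$ with $\lambda_2=\max\{\nu_3,1,a+b-\nu_1\}$; I checked that all the horizontal-strip and containment conditions follow from the interval inequalities, so this works, and it is arguably more transparent than the paper's swap. One small imprecision: in the edge case $\nu_3=0$ with $\nu_1\ge a+b$ (so $\lambda_2=1$), the inequality $\nu_2>\lambda_2$ follows from $\nu_1\le n-2$, not from $n>a+b$ as you write; since you list $\nu_2\ge 2$ among your hypotheses this is harmless, but the justification should be corrected.
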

\begin{proof}
By the previous theorem, we know that if $\beta$ satisfies the strict triangle inequality,  $\beta_1^+< \beta_2+\beta_3$, then $R_{\beta}$ has full support.
There remains three cases to analyse: $\beta=(\beta_1^+,\beta_2,\beta_3)$, or $\beta=(\beta_2,\beta_3,\beta_1^+)$, or $\beta=(\beta_2,\beta_1^+,\beta_3)$, with $\beta_1^+\ge \beta_2+\beta_3$.
Since the support of $R_{\beta}$ is invariant under 180 degrees rotation of the ribbon $R_{\beta}$, the first two cases can be reduced to the first one.
Suppose that  $\beta=(\beta_1^+,\beta_2,\beta_3)$ satisfies $\beta_1\geq\beta_2+\beta_3$ and $\beta_2\geq\beta_3$, and  recall that the overlapping partition is $p=(2,1,0)$.   Applying Theorem \ref{Tchar} with $i=1$ one has $\varrho_1=\beta_2+\beta_3$ and  $\tilde g^1_1=0\leq p_2-1=0$,
 and henceforth, the support of $R_{\beta}^p$ is not the entire Schur interval, since the partition $\nu=(\beta_1, \beta_2+\beta_3)$ is in the Schur interval  but  not in the support of $R_{\beta}$.
The same partition $\nu$ proves the result when $\beta_3\geq\beta_2$. Note that an LR filling of $R_{(\beta_1^+,\beta_2,\beta_3)}$ with content $\nu$ would oblige to fill the first row with $\beta_1$ 1's and  the last two rows with    $\beta_2+\beta_3$ 2's. Since the two last rows of $R_\beta$ overlap such a filling violates the column semistandard condition.

Finally, in the case of the connected ribbon $R_{(\beta_2,\beta_1^+,\beta_3)}$ satisfying $\beta_1^+\ge \beta_2+\beta_3$,  it is easy to show that    $\rm LR_{R_{\beta},\nu}\neq \emptyset$  for any partition in the Schur interval $[\beta^+,(|\beta|-2,2)]$. Indeed if $\ell(\nu)=3$, any $T\in Tab(\nu,\beta)$ is such that $\mathcal{D}(\widehat T)=\mathcal{S}(\beta)=\{\beta_2,\beta_1^++\beta_2\}$. If $\ell(\nu)=2$, consider the canonical filling
 $T\in Tab(\nu,\beta)$, $\nu=(\nu_1,\nu_2)$. Then the second row of $T$ has $3$'s and because $\beta_1^+\ge \beta_2+\beta_3$, $\nu_2< \beta_1^++\beta_3$ (otherwise $\nu_2>\nu_1=\beta_2$), the first row of $T$ has $\beta_2$ $1$'s and  at least one two. In case, $\nu_2=\beta_3\ge 2$, the 2nd row of $T$ has $\beta_3$ $3$'s and the $\beta_1^+\ge 2$, $2$'s are all in the first row of $T$, in which case we swap the rightmost $2$ in the first row with the leftmost $3$ in the second row to get a new tableau in $Tab(\nu,\beta)$. The descent set of this new tableau
 is $\mathcal{S}(\beta)=\{\beta_2,\beta_1+\beta_2\}$.
\end{proof}
\begin{remark}\label{compfull} If the composition  $\beta=(\beta_2,\beta_1^+,\beta_3)$ satisfies $\beta_1^+\ge \beta_2+\beta_3$  the connected ribbon $R_\beta$ has full support while $R_{\beta^+}$ does not have full support because $\beta_1^+\ge \beta_2+\beta_3$.
\end{remark}

\begin{corollary} \cite[Theorem 1.5.]{PvW} Let $\beta$ be an arbitrary composition with parts $\ge 1$.
Connected ribbons $R_\beta$ whose  column and row lengths differ at most  one have full support. They also have full equivalence class except when $\beta=(2^{\ell(\beta)-1},1)$, $\ell(\beta)\ge 3$.
\end{corollary}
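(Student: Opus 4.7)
The plan is to obtain the full support statement directly by invoking \cite[Theorem 1.5]{PvW}, which is the cited source, and then to establish the full equivalence class claim by splitting on whether all parts of $\beta$ are at least $2$ or $\beta$ contains a part equal to $1$.

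For the first case, assume every $\beta_i\ge 2$. A connected ribbon with all parts $\ge 2$ has columns of length at most $2$, so the hypothesis that row and column lengths differ by at most one forces the parts of $\beta$ to lie in $\{2,3\}$. For any three parts $x\le y\le z$ drawn from $\{2,3\}$ one has $z\le 3<4\le x+y$, so the strict triangle inequality is satisfied for every $3$-multiset contained in $\{\beta_1,\dots,\beta_{\ell(\beta)}\}$. Theorem~\ref{triangle}$(a)$ then yields that $R_\beta$ has full equivalence class. This reduces the problem to analyzing compositions containing a part equal to $1$.

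For the second case, I would use the row-positioning formula $a_{i+1}=a_i-\beta_{i+1}+1$ for a connected ribbon: whenever $\beta_{i+1}=1$ with $1<i+1<\ell(\beta)$, the three rows $i,i+1,i+2$ share the column $a_i=a_{i+1}$, producing a column of length at least $3$. A row of length $1$ together with such a column of length $3$ violates the diff-$\le 1$ hypothesis, so the $1$'s of $\beta$ can only occur at position $1$ or $\ell(\beta)$. Combined with the other parts being forced to equal $2$, this leaves $\beta$ (up to $\pi$-rotation, which preserves supports) equal to $(2^{\ell(\beta)-1},1)$, or else $\beta$ is trivial ($\ell(\beta)\le 2$, or all parts equal, so only one rearrangement exists). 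In the trivial subcases the full equivalence class follows from the invariance of $[R_\beta]$ under $\pi$-rotation.

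For the exceptional composition $\beta=(2^{\ell(\beta)-1},1)$ with $\ell(\beta)\ge 3$, consider the rearrangement $\gamma=(2^{\ell(\beta)-2},1,2)$. The $1$ in an interior position of $\gamma$ forces $a_{\ell(\beta)-1}=a_{\ell(\beta)-2}$ by the positioning formula, so $R_\gamma$ has a column of length $3$. Consequently the top element of the Schur interval $[\beta^+,\rm c(R_\gamma)']$ is strictly dominated by $(|\beta|-\ell(\beta)+1,\ell(\beta)-1)$, the top of $[R_\beta]$, which already belongs to $[R_\beta]$ by the full support statement. Hence $[R_\gamma]\subsetneq[R_\beta]$ and $\beta$ does not have full equivalence class. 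The main obstacle is cleanly enumerating the restricted list of $\beta$ arising in the second case; I would handle this by the positioning formula to rule out interior $1$'s and then checking the remaining short cases by inspection, invoking symmetry under $\pi$-rotation as needed.
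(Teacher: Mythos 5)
Your decomposition (all parts $\ge 2$ versus a part equal to $1$, the strict triangle inequality via Theorem~\ref{triangle} for the first case, the exclusion of interior $1$'s because they create a column of length three next to a row of length one, and the exhibition of a rearrangement of $(2^{\ell(\beta)-1},1)$ with a column of length three whose Schur interval no longer admits $(|\beta|-\ell(\beta)+1,\ell(\beta)-1)$) is essentially the skeleton of the paper's argument for the equivalence-class part. But there are two genuine gaps. First, your case enumeration is incomplete: a composition with parts in $\{1,2\}$ and no interior $1$ can have a $1$ at \emph{both} ends, i.e.\ $\beta=(1,2^{\ell(\beta)-2},1)$ with $\ell(\beta)\ge 3$; this satisfies the hypothesis (all columns have length two), is not a $\pi$-rotation of $(2^{\ell(\beta)-1},1)$, and does not have all parts equal, so it is covered neither by your exceptional case nor by your ``trivial'' cases. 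The paper disposes of it by transposing to $R_{(2^{\ell(\beta)-1})}$ and using the invariance of full support under conjugation.

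Second, and more seriously, the full-support claim is not proved at all: you invoke \cite[Theorem 1.5]{PvW}, which is precisely the statement being re-derived here as a corollary of the paper's own machinery. The only shape for which full support does not follow from Theorem~\ref{triangle} is $(2^{s},1)$ (that theorem requires all parts $\ge 2$, so it does not apply), and the bulk of the paper's proof is exactly this case: starting from the full support of $R_{(2^s)}$, every partition in $[(2^s,1),(s+1,s)]$ is obtained from one in $[(2^s),(s,s)]$ by adding one box in one of three positions, and the paper explicitly extends an LR filling of $R_{(2^s)}$ to one of $R_{(2^s,1)}$ in each case (using self-conjugacy of the interval to reduce the first position to the second, and a short bumping argument for the third). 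Without this construction the corollary is only a citation, not a consequence of the paper's results. A smaller point: you conclude $[R_\gamma]\subsetneq[R_\beta]$ for $\gamma=(2^{\ell(\beta)-2},1,2)$, but you have only shown $(|\beta|-\ell(\beta)+1,\ell(\beta)-1)\in[R_\beta]\setminus[R_\gamma]$, i.e.\ $[R_\gamma]\neq[R_\beta]$; the inclusion is not justified, though it is also not needed for the conclusion.
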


\begin{proof} Let $\beta=(\beta_1,\dots,\beta_{\ell(\beta)})$ and $R_\beta$  a connected ribbon in the conditions of the statement. Observe that the transpose of $R_\beta$ is still in the conditions of the statement. If $R_\beta$ or its transpose consists only of one or two rows  is trivial. Suppose that $R_\beta$ has at least three rows.
   If $\beta_i\geq 2$ for all $1\leq i\leq \ell(\beta)$, then  $|\beta_i-\beta_j|\leq 1$, for all
$1\leq i,j\leq \ell(\beta)$, and  any three parts $\beta_i\leq\beta_j\leq\beta_k$ of $\beta$ satisfy the strict triangle inequality
$\beta_k<\beta_i+\beta_j.$
By   Theorem \ref{triangle}, $(b)$,  $R_{\beta}$ has full support and full equivalence class. If $\beta_1=\beta_{\ell(\beta)}=1$ then $\beta_i=2$, $1<i<\ell(\beta)$, and transposing $R_\beta$ we fall in one of the previous cases: $\beta=(2^{\ell(\beta)})$ with $\ell(\beta)\ge 2$, and again $R_\beta$ has full support and full equivalence class.
If $\beta_1=1<\beta_{\ell(\beta)}$ or $\beta_1>\beta_{\ell(\beta)}=1$, by 180 degrees-rotation, we may assume the last inequality and we have $\beta=(2^{\ell(\beta)-1},1)$ with $\ell(\beta)\ge 3$. Put $s:=\ell(\beta)-1$ and  let $I:=[(2^s),(s,s)]$ be the Schur interval of $R_{(2^s)}$, $s\ge 2$. By the previous cases, the support of $R_{(2^s)}$ is the full interval $I$.

 The Schur interval of $R_{(2^s,1)}$ is $[(2^s,1),(s+1,s)]$ and it is self conjugate. Its  partitions are obtained using one extra box in the construction of the elements of $I$. There are three possible positions  to put the extra box in one element of $I$ and obtain $\nu\in [(2^s,1),(s+1,s)]$:
$(a)$ far right of the first row; $(b)$ below the last row;
or $(c)$  far right of the last row.

 Because $R_\beta=R_{(2^s,1)}={(R_\beta})'$ and $c_{R_\beta}^\nu=c_{R_\beta}^{\nu'}$,  by transposition of $\nu$, we may reduce $(a)$ to $(b)$. Hence if $T\in LR_{R_{(2^s)},\mu}$ then the SSYT $T_\Box$, obtained  by adding one box filled with $s+1$ below the last row of $T$, is in $LR_{R_{(2^s,1)},\nu}$ with $\nu=(\mu,1)$. Note that $\mathcal{D}(\widehat T_\Box)=\mathcal{S}(2^s,1)=\mathcal{S}(2^s)\cup\{2s\}$. It remains to prove that $\nu=(\mu_1,\dots,\mu_{\ell(\mu)-1},\mu_{\ell(\mu)}+1)$ obtained in $(c)$ is in $[R_{(2^s,1)}]$. If the last row of $T$ has at most one $s$ then just add one box filled with $s+1$ at the end of the this row to obtain $T_\Box$. If the last row of $T$ has two $s$'s also add one box filled with $s+1$ at the end of  this row. 
 At least one entry in the row above is not in the last row and choose that in the rightmost position: it is the far right entry:

 $(i)$ $s-1$,
 $T=\begin{array}{ccccc}\cdots\\\cdots&a&b&(s-1)\\
 \cdots&s&s&(s+1)\\
 \end{array}\rightarrow T_\Box=\begin{array}{ccccc}
 \cdots\\\cdots&a&b&s\\
 \cdots&s-1&s&(s+1)\\
 \end{array}, a<s-1, b\le s-1$

$(ii)$ $a<s-1$, $T=\begin{array}{ccccc}\cdots\\\cdots&d&c&a\\
 \cdots&s&s&(s+1)\\
 \end{array}\rightarrow T_\Box=\begin{array}{ccccc}
 \cdots\\
 \cdots \cdots&d&c&s\\
 \cdots a&\cdots&s&(s+1)\\
 \end{array}, c\le a<s-1, d<a$,
 $a$ enters in the last row of $T$ bumping to the right the left most strictly bigger entry;
 otherwise,
 $T=\begin{array}{ccccc}\cdots\\\cdots&d&c&a&x\\
 \cdots&x&s&s&(s+1)\\
 \end{array}\rightarrow T_\Box=\begin{array}{ccccc}
 \cdots\\
 \cdots &d&c&x&s\\
 \cdots a&\cdots&x&s&(s+1)\\
 \end{array}, d< a<x\le s-1, c\le a$, $a$ enters in the last row of $T$ bumping to the right the left most strictly bigger entry. In any case and $\mathcal{D}(\widehat T_\Box)=\mathcal{S}(2^s,1)$.

 Indeed, $\beta=(2^{\ell(\beta)-1},1)$ and $\gamma=(2,1,2^{\ell(\beta)-2})$, $\ell(\beta)\ge 3$, do not have the same Schur interval. The Schur interval of the latter is $[(2^s,1); (3,2^{s-2},1^2)]$ with $s=\ell(\beta)-1$ and henceforth $\beta=(2^{\ell(\beta)-1},1)$, $\ell(\beta)\ge 3$ does not have full equivalence class.
\end{proof}

\section{ Towards to  a coincidence between full  Schur support monotone connected ribbons and  full equivalence classes}
\label{conj}
In this section we consider connected ribbons with parts $\ge 2$ arranged in any order.
The necessary condition, given by Theorem \ref{ThmNec}, for the LR coefficient $c_{R_{\alpha}}^{\nu}$ to be  positive, with $\alpha$  a partition,  is generalized  to a  connected ribbon $R_{\alpha_\pi}$ where $\alpha_\pi$, $\pi\in \Sigma_{\ell(\alpha)}$, is a $\pi$-permutation   of the entries of $\alpha$. Thanks to the $180^\circ$-rotation symmetry of LR coefficients, $c_{R_\alpha}^\nu=c_{(R_\alpha)^\circ}^\nu$,  it is sufficient to consider partitions $\alpha$ of length $\ge 3$. That is, we already know that $c_{R_{(\alpha_1,\alpha_2)}}^\nu=c_{R_{(\alpha_2,\alpha_1)}}^\nu>0\Leftrightarrow\nu_i\leq \sum_{q= i}^{2}\alpha_q-p_i,\;\;1\leq i\leq 2$.
Recall  the definition of overlapping partition of a connected ribbon with row lengths in arbitrary order, Definition \ref{overlap}, and that the overlapping partition $p^\pi=(p_1^\pi,p_2^\pi,\dots,p_{\ell(\alpha)}^\pi,0)$ of the connected ribbon $R_{\alpha_\pi}$ satisfies \eqref{overlap}, $p^\pi\subseteq (\ell(\alpha)-1,\dots,1,0)$, that is, $p_1^\pi=\ell(\alpha)-1$, and $p_i^\pi\le \ell(\alpha)-i$, $2\le i\le \ell(\alpha)$.
\begin{theorem}\label{ineqcomposition} Let $\alpha$ be a partition with parts $\ge 2$, and $R_{\alpha_\pi}$ a connected ribbon with overlapping partition $p^\pi$.
  Let $\nu\in[\alpha,(|\alpha|-\ell(\alpha)+1,\ell(\alpha)-1)]$. Then
 \begin{equation}\label{equgen}\nu\in[R_{\alpha_\pi}](c_{R_{\alpha_\pi}}^\nu>0)\Rightarrow\nu_i\leq \sum_{q= i}^{\ell(\alpha)}\alpha_q-p^\pi_i,\;\;1\leq i\leq \ell(p^\pi).\end{equation}

\end{theorem}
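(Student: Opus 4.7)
The plan is to adapt the {\em only if} direction of Theorem~\ref{ThmNec}. Since $\nu\in[R_{\alpha_\pi}]$, Corollary~\ref{l3new}(a) provides a companion tableau $G\in Tab(\nu,\alpha_\pi)$ with $\mathcal{D}(\widehat G)=\mathcal{S}(\alpha_\pi)=\{d_s:=(\alpha_\pi)_1+\cdots+(\alpha_\pi)_s:1\le s\le\ell(\alpha)-1\}$. The Schur-interval hypothesis $\nu\in[\alpha,(|\alpha|-\ell(\alpha)+1,\ell(\alpha)-1)]$ gives $\alpha\preceq\nu$, so Remark~\ref{re:ineqdom} applied to the partition $\alpha$ already yields the preliminary inequality $\nu_i\le\sum_{q=i}^{\ell(\alpha)}\alpha_q$ for every $i\in\{1,\dots,\ell(\alpha)\}$; the task reduces to subtracting $p_i^\pi$.

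For the correction I would reuse the descent analysis employed in the proof of Theorem~\ref{ThmNec}: since the $s$-th horizontal strip $\nu^s/\nu^{s-1}$ of $G$ occupies rows $\le s$, a descent of $\widehat G$ at label $d_s$ with $s\ge i$ can sit in row $i$ of $\widehat G$ only when the topmost row of that strip equals $i$, in which event the SW-most box of the $(s+1)$-th strip, labeled $d_s+1$, is placed strictly below row $i$, ejecting one letter from row $i$. To count at least $p_i^\pi$ such forced displacements, I would select the index set $J_i\subseteq\{1,\ldots,\ell(\alpha)\}$ of the smallest $\ell(\alpha)-i+1$ rows of $R_{\alpha_\pi}$ in lowest position; by Definition~\ref{overlap}, $\sum_{j\in J_i}(\alpha_\pi)_j=\sum_{q=i}^{\ell(\alpha)}\alpha_q$ and exactly $p_i^\pi$ pairs $\{j,j+1\}\subseteq J_i$ of consecutive indices exist, each corresponding to an overlap column of $R_{\alpha_\pi}$. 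Each such overlap forces the matching descent $d_j$ of $\widehat G$ to land on row $i$, producing one displacement; summing these with the preliminary bound gives $\nu_i\le\sum_{q=i}^{\ell(\alpha)}\alpha_q-p_i^\pi$ for $1\le i\le\ell(p^\pi)$.

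The main obstacle is the bookkeeping in the second step: in the partition proof of Theorem~\ref{ThmNec} the analogous index set is the contiguous block $\{i,\dots,\ell(\alpha)\}$ and the descent-overlap correspondence is transparent, whereas for a general composition $\alpha_\pi$ the set $J_i$ may be scattered in $\{1,\ldots,\ell(\alpha)\}$. One must verify that the ``in lowest position'' tiebreaking rule built into Definition~\ref{overlap} is exactly what guarantees each of the $p_i^\pi$ overlaps among rows of $J_i$ produces a distinct row-$i$ displacement in $\widehat G$, which requires a careful refinement of the SYT descent analysis to handle non-contiguous indices.
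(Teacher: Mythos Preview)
Your approach diverges from the paper's and, as written, contains a genuine gap. The paper does \emph{not} adapt the companion-tableau descent count of Theorem~\ref{ThmNec}; instead it argues the contrapositive directly on LR fillings of the ribbon $R_{\alpha_\pi}$: assuming $\nu_{i+1}\ge\sum_{q\ge i+1}\alpha_q-p_{i+1}^\pi+1$, one uses that in any LR filling the first $i$ ribbon rows carry only letters $\le i$, so the $\nu_{i+1}$ copies of $i+1$ must fit as a horizontal strip into the connected sub-ribbon on the remaining $\ell(\alpha)-i$ rows, whose column count is then compared with the hypothesis to reach a contradiction.

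The gap in your proposal is the passage from the preliminary dominance bound $\nu_i\le\sum_{q\ge i}\alpha_q$ to the desired $\nu_i\le\sum_{q\ge i}\alpha_q-p_i^\pi$ via the set $J_i$. The companion tableau $G$ has content $\alpha_\pi$, not $\alpha$; the descent count in row $i$ of $\widehat G$ controls $\nu_i$ relative to $\sum_{s\ge i}(\alpha_\pi)_s$, which can exceed $\sum_{q\ge i}\alpha_q$. Your fix replaces the index block $\{i,\dots,\ell(\alpha)\}$ by $J_i$, but the letters $j\in J_i$ need not satisfy $j\ge i$: whenever $j<i$, the entire $j$-th horizontal strip of $G$ lies in rows $<i$, so the descent $d_j$ cannot sit in row $i$ and produces no displacement from row $i$ whatsoever. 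Thus the sentence ``each such overlap forces the matching descent $d_j$ of $\widehat G$ to land on row $i$'' is false in general, and you cannot ``sum these displacements with the preliminary bound'' because the two bounds live in different accountings (one via $\alpha$, the other via $\alpha_\pi$). Concretely, take $\alpha_\pi=(2,4,2,2)$ and $i=2$: then $J_2=\{1,3,4\}$ and the overlap pair $\{3,4\}$ gives $p_2^\pi=1$, but the descent bound from row $2$ of $G$ yields only $\nu_2\le(\alpha_\pi)_2+(\alpha_\pi)_3+(\alpha_\pi)_4-2=6$, not the required $\nu_2\le\alpha_2+\alpha_3+\alpha_4-1=5$.

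You correctly identify this bookkeeping as the ``main obstacle,'' but the proposal offers no mechanism to bridge the $\alpha$ versus $\alpha_\pi$ discrepancy. The paper's contrapositive route sidesteps this entirely by working inside the ribbon rather than the companion tableau.
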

\begin{proof} We prove the contrapositive assertion: if there exists $i\in\{1,\dots,\ell(\alpha)-2\}$ such that $\nu_{i+1}\ge \sum_{q\ge i+1}^{\ell(\alpha)}\alpha_q-p_{i+1}^\pi+1$ then $c_{R_{\alpha_\pi}}^\nu=0$. (Indeed $\nu_{1}\le \sum_{q\ge 1}^{\ell(\alpha)}\alpha_q-p_{1}^\pi+1$ and $\nu_{\ell(\alpha)}\le \alpha_{\ell(\alpha)}$.)

Let $\alpha_\pi=(\beta_1,\dots,\beta_{\ell(\alpha)})$ and let $i$ be the smallest element in $\{1,\dots,\ell(\alpha)-2\}$ such that $\nu_{i+1}\ge \sum_{q\ge i+1}^{\ell(\alpha)}\alpha_q-p_{i+1}^\pi+1$. Since $|\nu|=|\alpha|$ and $\alpha\preceq \nu$, one has
\begin{equation}\label{eq}
\sum_{q=1}^{i}\beta_q\le \sum_{q=1}^{i}\alpha_q\le \sum_{q=1}^{i}\nu_q=|\alpha|-\sum_{q\ge i+1}^{\ell(\alpha)}\nu_q\le \sum_{q=1}^{i}\alpha_q+p_{i+1}^\pi-1.
\end{equation}
If we place $\nu_1$ $1$'s, $\nu_2$ $2$'s, $\dots, \nu_i$ $i$'s in $R_\beta$ to obtain an LR filling then at least the first $i$ rows of $R_\beta$ are completely filled because one can not place in them numbers $\ge i+1$. Henceforth, in the best case one has $\sum_{q=1}^{i}\beta_q= \sum_{q=1}^{i}\alpha_q= \sum_{q=1}^{i}\nu_q$, so that  it remains $\ell (\alpha)-i$ rows of $R_\beta$ to place $\nu_{i+1}$ $i+1$'s. Because $R_\beta$ is connected the number of columns of length two among them is
$ \ell(\alpha)-i-1\ge p_{i+1}^\pi$. (In fact, in this case, one has the equality $ \ell(\alpha)-i-1=p_{i+1}^\pi$. Because one has the equality of  the multisets $\{\beta_i,\dots,\beta_{\ell(\alpha)}\}=\{\alpha_i,\dots,\alpha_{\ell(\alpha)}\}$  and by definition $p_{i+1}^\pi$ is the number of columns of length two among the rows $\alpha_i,\dots,\alpha_{\ell(\alpha)}$ of the ribbon $R_\beta$ which in this the same as among the rows of $R_{(\beta_i,\dots,\beta_{\ell(\alpha)})}$.) It means that in the best case the number of available boxes to fill with $\nu_{i+1}$, $i+1$'s, is in fact
\begin{eqnarray}\sum_{q\ge i+1}^{\ell(\alpha)}\beta_q- (\ell(\alpha)-i-1)=|\beta|-\sum_{q=1}^{i}\beta_q-(\ell(\alpha)-i-1)=|\alpha|- \sum_{q=1}^{i}\alpha_q-(\ell(\alpha)-i-1)\nonumber\\
=\sum_{q\ge i+1}^{\ell(\alpha)}\alpha_q- (\ell(\alpha)-i-1)\le \sum_{q\ge i+1}^{\ell(\alpha)}\alpha_q-p_{i+1}^\pi<\sum_{q=1}^{i}\alpha_q+p_{i+1}^\pi-1\le \nu_{i+1},\nonumber\end{eqnarray}
which is not enough. Therefore $c^\nu_{R_{\alpha_\pi}}=0$.
\end{proof}

\begin{remark}\label{re:conj}
 $(1)$ Under the assumption that row lengths are $\ge 2$,  $R_{\alpha_\pi}$ and $R_\alpha$ have the same  the Schur interval, $[\alpha,(|\alpha|-\ell(\alpha)+1,\ell(\alpha)-1)]$, for all $\pi\in \Sigma_{\ell(\alpha)}$.

 $(2)$ Assuming in Theorem \ref{ineqcomposition} that inequalities \eqref{equgen} are also sufficient for $\nu\in[R_{\alpha_\pi}]$, we have the following result.
 If $\nu\in[R_{\alpha}]$ with $\alpha$ a partition, and $\pi\in \Sigma_{\ell(\alpha)}$ then $$\nu_i\leq \sum_{q= i}^{\ell(\alpha)}\alpha_q-(\ell(\alpha)-i)\leq \sum_{q= i}^{\ell(\alpha)}\alpha_q-p_i^\pi,\;\;1\leq i\leq \ell(\nu)\Rightarrow \nu \in[R_{\alpha_\pi}].$$
Therefore, $[R_\alpha]\subseteq[R_{\alpha_\pi}]$, for any $\pi\in \sum_{\ell(\alpha)}$.
If $R_\alpha$ has full Schur support, $[R_{\alpha_\pi}]= [R_{\alpha}]$, for any $\pi\in \sum_{\ell(\alpha)}$, and $R_\alpha$ has full equivalence class. Thereby, $R_\alpha$ does not have full equivalence class if and only if $[R_\alpha]\subsetneqq[R_{\alpha_\pi}]$, for some $\pi\in \sum_{\ell(\alpha)}$.
\end{remark}
In other words, the  connected  ribbon $R_\alpha$ with $\alpha$ a partition with parts $\ge 2$ has full support only if $\alpha$ has full equivalence class.
This implies that the Gaetz-Hardt-Sridhar conjecture \cite[Conjecture II.4]{fullequiv} claiming that the necessary condition on full equivalence classes \eqref{N} is also sufficient, is true.
\begin{conjecture}Let $\alpha$ be a partition with parts $\ge 2$ and $R_\alpha $ a connected ribbon. Then the following are equivalent

$(a)$ $R_\alpha$ has full Schur support, that is, $[R_\alpha]=[\alpha,(|\alpha|-\ell(\alpha)+1,\ell(\alpha)-1]$;

$(b)$ $\alpha$ has full equivalence class;

$(c)$ For all $ j\in\{1,\dots, \ell(\alpha)-2\}$,$$ N_j:=max\{k:\sum_{\begin{smallmatrix}1\le i\le j\\\alpha_i<k\end{smallmatrix}}(k-\alpha_i)\le \ell(\alpha)-j-2\}<\varrho_j\Leftrightarrow \sum_{\begin{smallmatrix}1\le i\le j\\\alpha_i<\varrho_j\end{smallmatrix}} (\varrho_j-\alpha_i)\ge \ell(\alpha)-j-1.$$
\end{conjecture}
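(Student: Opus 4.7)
The plan is to deduce the three-way equivalence from results already proved in the paper together with one new ingredient. First, (a) $\Leftrightarrow$ (c) is already in hand: in the connected case $p=(\ell(\alpha)-1,\ldots,1,0)$, so Theorem \ref{corf} characterizes full Schur support by the inequalities \eqref{fullineq}, which Lemma \ref{lem:equiv} rewrites as $N_j<\varrho_j$ for $1\le j\le \ell(\alpha)-2$. The implication (b) $\Rightarrow$ (a) is exactly Theorem \ref{fullequivschur}. Only the new implication (a) $\Rightarrow$ (b) remains.

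Following Remark \ref{re:conj}, the strategy is to establish the converse of Theorem \ref{ineqcomposition}: for any connected ribbon $R_{\alpha_\pi}$ with parts $\ge 2$ and overlapping partition $p^\pi$, and any $\nu\in[\alpha,(|\alpha|-\ell(\alpha)+1,\ell(\alpha)-1)]$ with $\nu_i\le \sum_{q\ge i}\alpha_q-p_i^\pi$ for $1\le i\le \ell(p^\pi)$, one has $\nu\in[R_{\alpha_\pi}]$. Granted this, suppose $R_\alpha$ has full Schur support and fix $\nu$ in the Schur interval and a permutation $\pi$ of the parts. By Theorem \ref{ThmNec} applied to the monotone case, $\nu_i\le \sum_{q\ge i}\alpha_q-(\ell(\alpha)-i)$ for every $i$, and since \eqref{overlap} gives $p_i^\pi\le \ell(\alpha)-i$, the weaker inequalities $\nu_i\le \sum_{q\ge i}\alpha_q-p_i^\pi$ hold automatically. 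The converse of Theorem \ref{ineqcomposition} then produces $\nu\in[R_{\alpha_\pi}]$, so $[R_\alpha]\subseteq [R_{\alpha_\pi}]$; but both supports lie in the common Schur interval which $[R_\alpha]$ already fills, forcing $[R_{\alpha_\pi}]=[R_\alpha]$ for every $\pi$, and hence full equivalence class.

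The central task is therefore extending the constructive argument of Section \ref{sec:positivity} from partitions to arbitrary compositions for connected ribbons. Given $\nu$ satisfying the sufficient inequalities, one would start from the canonical filling in $Tab(\nu,\alpha)$ provided by Lemma \ref{kostkaproof}, relabel its entries according to $\pi^{-1}$ to obtain a candidate $T_\pi\in Tab(\nu,\alpha_\pi)$, and then eliminate all $p^\pi$-effective critical numbers by a modification of the Rotation procedure of Subsection \ref{sec:rotation}. The row-semistandard condition must be maintained throughout, and the effectiveness of each critical number must now be tested against $p^\pi$ rather than the staircase.

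The main obstacle is that in the non-monotone setting a local anticlockwise swap between two adjacent horizontal strips can break row-semistandardness, because a shorter row sandwiched between longer ones no longer absorbs rotated letters as cleanly as in the partition case treated in Lemma \ref{p1} and Lemma \ref{p2}. A careful case analysis along descents and ascents of $\alpha_\pi$, combined with the $180^\circ$-rotation symmetry $c^\nu_{R_{\alpha_\pi}}=c^\nu_{(R_{\alpha_\pi})^\circ}$ to reduce to favorable orderings, and possibly a recursive decomposition of $\alpha_\pi$ into maximal monotone runs, should make the argument go through. The slack $\sum_{q\ge i}\alpha_q-p_i^\pi-\nu_i\ge 0$ supplied by the hypothesis is precisely what guarantees enough room below the critical row for each rotation step to be executable, mirroring the role it plays in the monotone proof of Theorem \ref{ThmSuf}.
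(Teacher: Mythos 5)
This statement is labeled a \emph{conjecture} in the paper, and the paper does not prove it: it only establishes $(a)\Leftrightarrow(c)$ (Theorem \ref{corf} plus Lemma \ref{lem:equiv}) and $(b)\Rightarrow(a)$ (Theorem \ref{fullequivschur}), and then observes in Remark \ref{re:conj} that the remaining implication $(a)\Rightarrow(b)$ would follow \emph{if} the inequalities \eqref{equgen} of Theorem \ref{ineqcomposition} were also sufficient for $\nu\in[R_{\alpha_\pi}]$ — a sufficiency that is left open. Your proposal reduces the problem to exactly the same missing ingredient, which is the right reduction, but it does not then supply that ingredient: the passage ``a careful case analysis along descents and ascents of $\alpha_\pi$ \dots should make the argument go through'' is a statement of intent, not an argument. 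You have correctly located the obstacle but not overcome it, so the proposal does not prove the statement.

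Concretely, the gap is in extending the Rotation procedure of Section \ref{sec:rotation} beyond partitions. The proof of Lemma \ref{p1} explicitly invokes ``we are assuming $\alpha$ a partition and thus $\alpha_{j-1}\ge\alpha_j\ge\alpha_{j+1}$'' to guarantee that the anticlockwise rotation preserves semistandardness and that row $j$ contains enough letters $j+1$ (and, in the second step, $j+2$) to rotate; for a non-monotone $\alpha_\pi$ a short row sandwiched between longer ones can fail both requirements, and you offer no replacement mechanism. Your reductions via $180^\circ$-rotation and ``maximal monotone runs'' are plausible heuristics but are not carried out, and neither symmetry reduces a general composition to a partition (Corollary \ref{cor:length3} already shows that non-monotone rearrangements behave genuinely differently: $R_{(\beta_2,\beta_1^+,\beta_3)}$ can have full support while $R_{\beta^+}$ does not). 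A secondary issue: relabelling the entries of the canonical filling in $Tab(\nu,\alpha)$ ``according to $\pi^{-1}$'' does not in general produce a semistandard tableau of content $\alpha_\pi$; you should instead invoke Lemma \ref{kostkaproof} directly for the composition $\alpha_\pi$, which does furnish a canonical element of $Tab(\nu,\alpha_\pi)$. Until the sufficiency of \eqref{equgen} for arbitrary connected ribbons is actually proved, the implication $(a)\Rightarrow(b)$, and hence the three-way equivalence, remains open — which is precisely why the paper states it as a conjecture.
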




\begin{thebibliography}{9999}
\bibitem[Az99]{az}
   O. Azenhas, `The admissible interval for the invariant factors of a product of matrices', \emph{Linear and Multilinear Algebra} {\bf 46} (1999), 51--99.


\bibitem[ACM09]{acm} O. Azenhas, A. Conflitti and R. Mamede. \newblock\emph{Linear time equivalent Littlewood--Richardson
coefficient maps}, Discrete Math. Theor. Comput. Sci. Proceedings,
21st International Conference on Formal Power Series and Algebraic Combinatorics, (2009), 127--144.

\bibitem[ACM17]{acm17} O. Azenhas, A. Conflitti and R. Mamede. Multiplicity--free Skew
Schur functions with full interval support, arXiv:1009.4170v2 (2011).

\bibitem[AKT16]{akt16} O. Azenhas, R. C. King, I. Terada,	The involutive nature of the Littlewood-Richardson commutativity bijection, arXiv:1603.05037


\bibitem[BumSch16]{bumschi}
D. Bump and A. Schilling. \newblock\emph{Crystal Bases. Representations and Combinatorics}, World Scientific Publishing Co. Ptc. Ltd, 2016.

\bibitem[DoPy07]{pylyavsky} G. Dobrovolska, P. Pylyavskyy, On products of $sl_n$ characters and support containment, J. Algebra 316 (2) (2007)
706–-714.

\bibitem[Fo76]{foulkes} H. O. Foulkes,  Enumeration of permutations with prescribed up-down and inversion
sequences, Discrete Math. 15 (1976), 235--252.


\bibitem[Fu97]{ful} W. Fulton. \newblock\emph{Young Tableaux: With Applications to Representation Theory and Geometry},
London Mathematical Society Student Texts, Cambridge University Press, Cambridge, USA, 1997.

\bibitem[Fu00] {ful2} W. Fulton, Eigenvalues, invariant factors, highest weights, and Schubert calculus, Bull. Amer. Math. Soc. (NS) 37 (3) (2000) 209–-249.

    \bibitem[GaHaSr17]{fullequiv}
    M. Gaetz, W. Hardt and S. Sridhar, Support Equalities Among Ribbon
Schur Functions,  arXiv:1709.03011v1 (and  arXiv:1709.03011v2).

\bibitem [GaHaSrTr17]{fullequiv2} M. Gaetz, W. Hardt,  S. Sridhar and A. Quoc Tran, Support Equalities of Ribbon Schur Functions, October, 2017, http://www-users.math.umn.edu/~reiner/REU/REU.html.

  \bibitem[Ge84]{gessel84}  I. M. Gessel. Multipartite P-partitions and inner products of skew
Schur functions. In Combinatorics and algebra (Boulder, Colo.,
1983), volume 34 of Contemp. Math., pages 289--301. Amer. Math.
Soc., Providence, RI, 1984.

\bibitem[Ge93]{gessel93} I. Gessel and C. Reutenauer, Counting Permutations with Given Cycle Structure
and Descent Set, J.  Combin., Ser A 64, 189-215 (1993).



\bibitem[GeZe86]{GZpolyed} I.M. Gelfand and A.V. Zelevinsky, Multiplicities and proper bases for ${gl}_n $,
in {\em Group Theoretical Methods in Physics, Vol II, Proc. 3rd Yurmala Seminar 1985}, VNU Sci. Press, Utrecht, (1986), 147--159.



\bibitem [Ho62]{horn} A. Horn, Eigenvalues of sums of Hermitian matrices, Pacific J. Math., 12 (1962), 225--241.

\bibitem[Kl98]{klyachko} A. A. Klyachko,
 Stable bundles, representation theory and Hermitian operators, Sel. math., New ser. 4 (1998) 419--445.

\bibitem[KnTa99]{knutao} A. Knutson and T. Tao. The honeycomb model of GLn(C) tensor products. I. Proof
of the saturation conjecture, J. Amer. Math. Soc., 12(4):1055--1090, 1999.

\bibitem[Kwo09]{kwon} J.--H. Kwon. \newblock\emph{Cystal Graphs and the Combinatorics of Young Tableaux} in M. Hazewinkel (ed.), Handbook of Algebra Vol 6,
pp. 473--504,  North-Holland 2009.

\bibitem[JaVi17]{JaVi}J. Janopaul-Naylor, C. R. Vinroot, Kostka multiplicity one for multipartitions,
Austr. J. of Combinatorics,
Vol 68(2), 2017,  153–-185.


\bibitem[LecLen17]{leclen}  C. Lecouvey, C. Lenart.
\newblock\emph{Combinatorics of generalized exponents }, arXiv:1707.03314,  2017.

\bibitem[Nak05]{Nak05}T. Nakashima. \newblock\emph{Crystal base and a generalization of the Littlewood-Richardson rule for classical Lie Algebras}, Commun. Math. Phy, 154 (1993), 215--243.

\bibitem[LiRi34]{lr}
   D.E. Littlewood and A. R. Richardson, `Group characters and algebra', \emph{Phil. Trans. Royal Soc. A (London)} {\bf 233} (1934), 99--141.

\bibitem[Liu12]{liu} R. I. Liu, Matching polytopes and Specht modules, Trans. of the American Math. Soc., Volume 364, Number 2, February 2012, pp. 1089–-1107.


\bibitem[Mac17]{mac}P.A. MacMahon, Combinatory Analysis, Cambridge University Press, 1917 (Vol. I), 1918
(Vol. II), reprint, Chelsea, New York, USA, 1960.

\bibitem[Mc08]{Mn}
   P. R. W. McNamara, `Necessary conditions for Schur--positivity', \emph{J. Algebraic Combin.} {\bf 28} (2008), 495--507.



\bibitem[McWi09]{PvW1} P. R. W. McNamara and S. van Willigenburg, Towards a Combinatorial Classification of Skew Schur Functions,
Trans.  American Math. Soc.
Vol. 361, No. 8 (Aug., 2009), pp. 4437--4470

\bibitem[McWi12]{PvW} P. R. W. McNamara and S. van Willigenburg. Maximal supports and Schur positivity
among connected skew shapes. European J. of Combin.
V. 33 (6), 2012, pp. 1190--1206.


\bibitem[ReShi98]{rs}V. Reiner and M. Shimozono. Percentage-avoiding, northwest shapes and peelable tableaux. J.
Combin. Theory Ser. A, 82(1):1–-73, 1998.

\bibitem[Sa01]{sa} B. Sagan.
\newblock\emph{The Symmetric Group. Representations, Combinatorial Algorithms, and Symmetric functions},
Springer Verlag, New York, 2001.

\bibitem[Sch77]{sch} M.-P. Sch\"utzenberger. La correspondance de Robinson. In Combinatoire et representation du
groupe sym\'etrique (Actes Table Ronde CNRS, Univ. Louis-Pasteur Strasbourg, Strasbourg,
1976),  59--113. Lecture Notes in Math., Vol. 579. Springer, Berlin, 1977.

\bibitem[St99]{stanley}
   R. P. Stanley, \emph{Enumerative Combinatorics. Vol. 2}, Cambridge University Press, Cambridge 1999.



\bibitem[Tho78]{T2} G.P. Thomas,
 On {S}chensted's construction and the multiplication of {S}chur
  functions,
 {\em Adv. Math.} {\bf 30} (1978) 8--32.



\end{thebibliography}
\end{document}